\newtheorem{theorem}{Theorem}
\newtheorem{lemma}[theorem]{Lemma}
\theoremstyle{definition}
\newtheorem{remark}[theorem]{Remark}
\numberwithin{equation}{section}
\newcommand{\ddiv}{\operatorname{div}}
\newcommand{\Curl}{\operatorname{Curl}}
\newcommand{\rot}{\operatorname{rot}}
\newcommand{\sym}{\operatorname{sym}}
\newcommand{\tr}{\operatorname{tr}}
\newcommand{\T}{\mathcal{T}}
\newcommand{\E}{\mathcal{E}}
\newcommand{\N}{\mathcal{N}}
\newcommand{\C}{\mathbb{C}}
\newcommand{\ee}{\bm{\varepsilon}_h}
\newcommand{\osc}{\operatorname{osc}}
\newcommand{\R}{\mathbb{R}}
\DeclareMathOperator{\Res}{Res}
\newcommand{\anorm}[1]{\lvert\!\lvert\!\lvert#1 \rvert\!\rvert\!\rvert}
\begin{document}
\pagestyle{fancy}
\fancyhead{}
\setlength{\headheight}{14pt}
\renewcommand{\headrulewidth}{0pt}
\fancyhead[c]{\small \it Symmetric stress MFEM}
\title{Residual-based a~posteriori error analysis for symmetric mixed Arnold-Winther FEM}
\author{%
        C.~Carstensen\thanks{Humboldt-Universit\"at zu Berlin, Berlin, Germany
         } \and
       D.~Gallistl\thanks{Karlsruher Institut f\"ur Technologie, Karlsruhe, Germany
        }
        \and 
       J.~Gedicke\thanks{\text{Faculty~of~Mathematics,~University~of~Vienna,~Vienna,~Austria     
}}}
\date{}
\maketitle
\begin{abstract}
This paper introduces an explicit residual-based a~posteriori error analysis
for the symmetric mixed finite element method in linear elasticity
after Arnold-Winther with pointwise symmetric and  $H(\ddiv)$-conforming stress approximation.
Opposed to a previous publication, the residual-based
a posteriori error estimator of this paper is  reliable and efficient and truly explicit in that it solely 
depends  on the symmetric stress and does neither
need any additional information of some skew symmetric part of the gradient nor any efficient approximation thereof.
Hence it is straightforward to implement an  adaptive mesh-refining 
algorithm  obligatory in practical computations.

Numerical experiments verify the proven reliability and efficiency of the
new a posteriori error estimator and illustrate the improved convergence rate in comparison 
to uniform mesh-refining.
A higher convergence rates for piecewise affine data is observed in the $L^2$ stress error and reproduced
in non-smooth situations by the adaptive mesh-refining strategy. 
\end{abstract}

{\small\noindent\textbf{Keywords}
linear elasticity, mixed finite element method, a posteriori, explicit
residual-based, error estimator, reliability, efficiency,  symmetric stress finite elements,
Arnold-Winther finite element

\noindent
\textbf{AMS subject classification}
65N15,
65N30
}
\section{Introduction}
\subsection{Overview}
\label{subsec:1.1}
The design of a pointwise symmetric stress approximation $\sigma_h\in L^2(\Omega;\mathbb S)$ 
with divergence in $L^2(\Omega;\R^d)$, written $\sigma_h \in H(\ddiv,\Omega; \mathbb S)$, 
has been a long-standing challenge  \cite{A}  and the first positive examples in \cite{AW02} initiated what nowadays
is called the finite element exterior calculus \cite{AFW}.  The a posteriori error analysis of  mixed finite element methods 
in elasticity started with \cite{CD_1998} on PEERS \cite{ABD84}, where the asymmetric stress approximation $\gamma_h$ 
arises in the discretization as a Lagrange multiplier to enforce weakly the stress symmetry. This allows 
the treatment of the term $\C^{-1} \sigma_h + \gamma_h$ as an approximation of the (non symmetric) 
functional matrix $Du$ for the displacement field  \cite{CD_1998}  
with the arguments of \cite{Alonso,C97} developed for mixed
finite element schemes for a Poisson model problem. 
Here and throughout, $\C$ denotes a fourth-order elasticity tensor with two Lam\'e constants $\lambda$ and $\mu$ 
and $\C^{-1}$ is its inverse. Mixed finite element methods appear attractive in the incompressible limit
for  they typically avoid  the locking phenomenon \cite{CEG2011} as $\lambda\to\infty$.

For mixed finite element methods like the symmetric Arnold-Winther finite element schemes \cite{AW02}, 
the subtle term is the nonconforming residual: Given any piecewise 
polynomial  $\sigma_h \in H(\ddiv,\Omega; \mathbb S)$,  compute an upper bound  $\eta(\mathcal{T},\sigma_h)$ of
\[
\inf _{v\in V} \|   \C^{-1/2} \sigma_h-\C^{1/2}\varepsilon (v) \|_{L^2(\Omega)} \lesssim \eta(\mathcal{T},\sigma_h).
\]
Despite general results in this direction \cite{CC2005,CCHJ2007,ccdpas}, this task had been addressed 
only by the computation of an approximation to  the optimal $v$  with Green strain $\epsilon(v):=\sym D v$ 
or of some skew-symmetric approximation $\gamma_h$ motivated 
from the first results in  \cite{CD_1998} on PEERS. 
In fact, {\em any}  choice of a piecewise smooth and pointwise skew-symmetric $\gamma_h$  
allows for an a~posteriori error control of the symmetric stress error $\sigma-\sigma_h$ in \cite{CG_2016}. 
Its efficiency, however, depends on the 
(unknown and uncontrolled) efficiency  of the choice of $\gamma_h$ as an   approximation to the  
skew-symmetric part $\gamma$ of $Du$.

\bigskip

This paper proposes  the first reliable and efficient  {\em explicit}  residual-based a~posteriori error
estimator of the nonconforming residual  with the typical contributions to 
$\eta(\mathcal{T},\sigma_h)$ computed from  the (known) Green strain approximation 
$\ee:= \C^{-1} \sigma_h$. Besides oscillations of the applied forces in the volume and along the 
Neumann boundary, there is a volume contribution $h_T^2\|  \rot\rot\ee\|_{L^2(T)}$ for each triangle  $T\in\T$
and an edge contribution with  the jump $[\ee]_E$ across an
 interior edge $E$ with unit normal $\nu_E$, tangential unit vector $\tau_E$, and length $h_E$,
namely
 \[
  h_E^{1/2}  \| \tau_E   \cdot [\ee]_E \tau_E \|_{L^2(E)} +
    h_E^{3/2}  \| \tau_E   \cdot [\rot_{NC} \ee]_E - \partial ( \nu_E   \cdot [\ee]_E \tau_E)/\partial s   \|_{L^2(E)},
 \]
 and corresponding modification on the edges on the Dirichlet boundary with the (possibly inhomogeneous) 
 Dirichlet data; cf. Remark~\ref{remarkonjumptermvanishes} for some partial simplification of the last term
 displayed.
 
 For the ease of this presentation,  the analysis involves  explicit calculations in 
 two dimensions  without any reference to the exterior  calculus but with inhomogeneous 
 Dirichlet and Neumann boundary data.
 The main result is reliability and efficiency to control the stress error robustly in  
 the sense that the multiplicative generic constants hidden in the notation $\lesssim$ 
 do neither depend on the  (local or global) mesh-size nor on the parameter $\lambda>0$ 
 but may depend on $\mu>0$ and on the shape regularity of the underlying triangulation 
 $\mathcal{T}$ of the domain $\Omega$ 
 into triangles through a lower bound of the minimal angle therein.

\subsection{Linear elastic model problem}
The elastic body $\Omega$ is a simply-connected bounded Lipschitz domain $\Omega\subset \R^2$ in the 
plane  with a  (connected) polygonal  boundary  $\partial\Omega = \Gamma_D\cup\Gamma_N$ split into parts.
The displacement boundary $\Gamma_D$ is compact and of positive surface measure,
while the traction boundary is the relative open complement $\Gamma_N=\partial\Omega\backslash\Gamma_D$
with outer unit normal vector $\nu$.
Given  $u_D\in  H^1(\Omega;\R^2)$, the volume force $f\in L^2( \Omega; \mathbb{R}^2)$, and the applied surface traction
$g \in L^2( \Gamma_N; \mathbb{R}^2)$, the  linear elastic problem seeks  a displacement $u\in H^1(\Omega;\R^2)$
and a symmetric stress tensor  $\sigma\in H(\ddiv,\Omega;  \mathbb{S})$ with 
\begin{align}\label{eq:lame}
\begin{split}
	-\ddiv \sigma = f \quad\text{and}\quad  \sigma = \C\varepsilon(u)\quad\mbox{in } \Omega,\\
	u = u_D\quad\mbox{on }\Gamma_D, \qquad \sigma\nu = g\quad\mbox{ on }\Gamma_N.
\end{split}	
\end{align}
Throughout this paper, given the Lam\'e parameters 
$\lambda,\mu>0$ for isotropic linear elasticity,  the positive definite fourth-order elasticity tensor $\C$ acts as 
$\C E:=2\mu\, E+ \lambda\,\tr(E)\,1_{2\times 2} $
on any matrix $E\in\mathbb{S}$ with trace $\tr(E)$ and the $2\times 2$ unit
matrix $1_{2\times 2} $.
Note that $u_D$ acts in \eqref{eq:lame} only on $\Gamma_D$ and is an extension
of  the continuous function $u_D\in C(\Gamma_D;\R^2)$ also supposed to belong to the 
edgewise second order Sobolev space $ H^2( \E(\Gamma_D))$ below to allow second derivatives
with respect to the arc length along boundary edges. 

More essential will be a discussion on the  precise conditions on the Neumann data $g$ and its discrete 
approximation $g_h$ below for they  belong to the 
essential boundary conditions in the mixed finite element method based on the dual formulation.

In addition to the set of homogeneous displacements $V$ and the aforementioned stress space 
$H(\ddiv,\Omega;\mathbb{S}) $, namely, 
\[
V:=\{v\in H^1(\Omega;\mathbb{R}^2)\ \big\vert\  v|_{\Gamma_D}=0\}
\text{ and }
H(\ddiv,\Omega;\mathbb{S}) := \{ \tau \in L^2(\Omega;\mathbb{S}) \ \big\vert\ \ddiv\tau\in L^2(\Omega;\mathbb{R}^2)\},
\]
and with the exterior unit normal vector $\nu$ along $\partial\Omega$,
 the  inhomogeneous  stress space
\[
 \Sigma(g) := \Big\{ \sigma\in H(\ddiv,\Omega;\mathbb{S})\ \big\vert 
  \int_{\Gamma_N}\psi\cdot(\sigma\nu )\;ds = \int_{\Gamma_N}\psi\cdot g\;ds \text{ for all } \psi\in V \Big\}
\]
is defined with respect to the Neumann data $g\in L^2(\Gamma_N)$ and, in particular,   
$ \Sigma_0 :=\Sigma(0)$ abbreviates  the stress space with homogeneous 
Neumann boundary conditions.

Given data  $u_D, f,g$ as before, the dual weak formulation of \eqref{eq:lame}  seeks
$(\sigma,u)\in \Sigma(g)\times L^2(\Omega;\mathbb{R}^2)$ with
\begin{align}
  \label{eq:lame-weak}
  \begin{split}
    \int_\Omega \sigma:\C^{-1}\tau\;dx + \int_\Omega u\cdot\ddiv\tau\;dx
    &= \int_{\Gamma_D} u_D\cdot(\tau\nu)\,ds\quad\textrm{for all }\tau\in\Sigma_0,\\
    \int_\Omega v\cdot\ddiv\sigma\;dx &= -\int_\Omega f\cdot v\,dx\quad\textrm{for all } v\in L^2(\Omega;\mathbb{R}^2).
  \end{split}
\end{align}
It is well known that the two formulations are equivalent and well posed in the sense that they allow for unique 
solutions in the above spaces and are actually slightly more regular according to the reduced elliptic regularity 
theory. The reader is refereed to textbooks on finite element methods 
\cite{BrennerScott2008,Braess2007,BoffiBrezziFortin2013} for proofs and further descriptions.

Throughout this paper, the model problem considers truly mixed boundary conditions with the hypothesis
that both $\Gamma_D$ and $\Gamma_N$ have positive length. The remaining cases of a pure Neumann 
problem or a pure Dirichlet problem require standard modification and are immediately adopted. The presentation focusses
on the case of isotropic linear elasticity with constant Lam\'e parameters $\lambda$ and $\mu$ 
for brevity and many results carry over to more general situations 
(cf. Remark \ref{rematkcc1} and \ref{remarkonjumptermvanishes}  for instance). 

\subsection{Mixed finite element discretization}
Let $\T$ denote a shape-regular triangulation of $\Omega$ into triangles 
(in the sense of Ciarlet \cite{BrennerScott2008})
with set of nodes $\N$, set of interior edges $\E(\Omega)$, set of Dirichlet edges $\E(\Gamma_D)$
and set of Neumann edges $\E(\Gamma_N)$. The triangulation is compatible with the boundary pieces
$\Gamma_D$ and $\Gamma_N$ in that the boundary condition changes only at some vertex $\N$
and $\Gamma_D$ (resp. $\overline{\Gamma_N}$) is partitioned in $\E(\Gamma_D)$
(resp. $\E(\Gamma_N)$).

The piecewise polynomials (piecewise with respect to the triangulation
$\T$) of total degree at most $k\in\mathbb{N}_0$ are denoted as  $P_k(\T)$,  their vector- or matrix-valued 
versions  as  $P_k(\T;\R^2)$ or $P_k(\T;\R^{2\times 2})$ etc. 
The  subordinated Arnold-Winther finite element space $AW_k(\T)$ of index $k\in \mathbb{N}$
\cite{AW02} reads
\[
 AW_k(\T):=\big\{\tau\in P_{k+2}(\T;\mathbb S)\cap H(\ddiv,\Omega;\mathbb{S})\,
 \big|\,\ddiv\tau\in P_k(\T;\mathbb{R}^2)\big\}.
\]
The Neumann boundary conditions are essential conditions and are traditionally implemented by some approximation
$g_h$ to $g$ in the normal trace space 
\[
G(\T):=\{  (\tau_h \nu) |_{\Gamma_N} \in L^2(\Gamma_N;\R^2) \,\big|\,
 \tau_h\in  AW_k(\T) \}
\]
(recall that $\nu$ is the exterior unit normal along the boundary). Given any $g_h\in G(\T)$, 
the discrete stress approximations are sought in the non-void affine subspace 
\[
 \Sigma(g_h,\T):=\Sigma(g)\cap AW_k(\T)
\]
of  $AW_k(\T)$ with test functions in the  linear subspace  $\Sigma(0,\T):=\Sigma_0\cap AW_k(\T)$. 
Then there exists a unique 
discrete solution  $\sigma_h\in\Sigma(g_h,\T)$ and $ u_h\in V_h:= P_k(\T;\R^2)$ to
\begin{align}\label{eq:discpp}
\begin{split}
\int_\Omega\sigma_h :\C^{-1}\tau_h\,dx+\int_\Omega u_h\cdot\ddiv\tau_h\,dx
& = \int_{\Gamma_D} u_D \cdot(\tau_h\nu)\, ds 
\quad \textrm{for all } \tau_h\in\Sigma(0,\T),\\
\int_\Omega v_{h}\cdot \ddiv\sigma_h\, dx & = 
\int_\Omega f\cdot v_{h}\, dx\quad\textrm{ for all }  v_{h}\in V_h.
\end{split}
\end{align}
The explicit design of a Fortin projection  leads in \cite{AW02} to quasi-optimal a~priori error estimates
for an exact solution $(\sigma, u)\in(\Sigma(g)\cap H^{k+2}(\Omega;\mathbb S))
\times  H^{k+2}(\Omega)$ to \eqref{eq:lame}
and the approximate solution $(\sigma_h, u_h)$ to 
\eqref{eq:discpp}, namely (with the maximal mesh-size $h$)
\begin{align*}
  \|\sigma-\sigma_h\|_{L^2(\Omega)}&\lesssim h^{m}\|\sigma\|_{H^m(\Omega)}\quad\mbox{for } 1\leq m \leq k+2,\\
  \| u- u_h\|_{L^2(\Omega)}&\lesssim h^{m}\|u\|_{H^{m+1}(\Omega)}\quad\mbox{for } 1\leq m \leq k+1. 
\end{align*}

Another stable pair of different and mesh-depending norms in  \cite{CGS} 
 implies the $L^2$ best approximation of  the stress error $\sigma-\sigma_h$ 
up to a generic multiplicative constant and data oscillations on $f$
under some extra condition (N) on the Neumann data approximation $g_h$ implied by
the first and zero moment orthogonality assumption $g-g_h\perp  P_1( \mathcal{E}(\Gamma_N);\R^2)$ 
($\perp$ indicates orthogonality in $L^2(\Gamma_N)$) met in all the numerical examples of this paper.

For simple benchmark examples with piecewise polynomial data $f$ and $g$, there is even 
a superconvergence phenomenon   visible in numerical examples. The arguments of this paper
allow a proof of  fourth-order convergence of the $L^2$ stress error $\|\sigma-\sigma_h\|=o( h^4 )$
in the lowest-order Arnold-Winther method with $k=1$
for a smooth stress $\sigma\in H^4(\Omega;\mathbb{S})$  with  $f=f_h\in P_1(\T;\R^2)$ and $g=g_h\in G(\T)$.
(In fact, once the data are not piecewise affine, the arising oscillation terms are only of at most third order and 
the aforementioned convergence estimates are sharp.)  

This is stated as  Theorem~\ref{supperconvergence} in the appendix, because the a~priori error analysis
lies  outside of the  main focus of this work. It is surprising though that adaptive mesh-refining suggested
with this paper recovers this higher convergence rate even for the inconsistent Neumann data in the 
Cook membrane  benchmark example below.

\subsection{Explicit residual-based  a~posteriori error estimator}
The novel explicit residual-based error estimator for the discrete solution $(\sigma_h, u_h)$ to \eqref{eq:discpp}
depends only on the Green strain approximation $\C^{-1}\sigma_h$ and its piecewise derivatives and 
jumps across edges. 

Given any edge $E$ of length $h_E$, let $\nu_E$ denote the unit normal vector 
(chosen with a fixed orientation such that it points outside along the boundary $\partial\Omega$ 
of $\Omega$) and let $\tau_E$ denote its tangential unit vector; by convention 
$\tau_E =  (0,-1; 1,0) \nu_E$ with the indicated asymmetric $2\times 2$  matrix. The tangential 
derivative $\tau_E\cdot \nabla\bullet$ along an edge (or boundary)  is identified with the 
one-dimensional derivative $\partial \bullet/\partial s$ with respect to the arc-length 
parameter $s$.
The jump $[v]_E$ of any piecewise continuous scalar, vector, or matrix $v$ across an interior edge 
$E = \partial T_+\cap \partial T_-$ shared by the two triangles $T_+$ and $T_-$ such that
$\nu_E$ points outside $T_+$ along $E\subset \partial T_+$ reads
\[
[v]_E := (v|_{T_+})|_E -(v|_{T_-})|_E.
\]
The rotation acts on a vector field $\Phi$  (and row-wise on matrices) via 
$\rot \Phi  := \partial_1 \Phi_2 - \partial_2 \Phi_1$ and 
$\rot_{NC}$ denotes its piecewise application.

Under the present notation and the throughout abbreviation 
$\ee:=\C^{-1}\sigma_h$, the explicit residual-based   a~posteriori error estimator reads 
\begin{eqnarray}\label{eq:estimator}\nonumber
&\hspace{-20mm} 
\displaystyle \eta^2(\T,\sigma_h):=\sum_{T\in\T}h_T^4 \|\rot\rot\ee \|_{L^2(T)}^2 +\osc^2(f,\T)
+  \osc^2(g-g_h,\E(\Gamma_N)) \\ &
\hspace{-5mm}+\hspace{-2mm}\displaystyle
\sum_{E\in\E(\Omega)}\hspace{-2mm} \left(\hspace{-1mm} h_E \|\tau_E \cdot [\ee]_E \tau_E \|_{L^2(E)}^2
+ h_E^3 \| \tau_E\cdot (   [\rot_{NC}\ee]_E - \frac{ \partial  [\ee]_E \nu_E }{\partial s })\|_{L^2(E)}^2
\hspace{-1mm}\right)
\\ &\hspace{-1mm} +\hspace{-3mm}\displaystyle \sum_{E\in\E(\Gamma_D)} 
\hspace{-2mm} \left(\hspace{-1mm}h_E\|\tau_E \cdot (\ee \tau_E - \frac{ \partial u_D}{\partial s}) \|_{L^2(E)}^2 + 
h_E^3 \|\tau_E\cdot \rot \ee  - \nu_E\cdot (\frac{ \partial \ee \tau_E }{\partial s }
         +\frac{ \partial^2 u_D}{ \partial s^2}) \|_{L^2(E)}^2\hspace{-1mm}\right) \nonumber
 \end{eqnarray}
for the oscillations $\osc(f,\T)$ of the volume force and the oscillations $\osc(g-g_h,\E(\Gamma_N))$
of the traction boundary condition, defined below. 

\begin{theorem}[reliability]\label{thmREL}
There exists a mesh-size and $\lambda$ independent constant $C_{\textrm{rel}} $  
(which may depend on $\mu$ and on the shape-regularity of the triangulation $\T$ 
through a global lower bound of the minimal angle therein) 
such that the exact (resp. discrete) stress 
$\sigma$  from \eqref{eq:lame} (resp. $\sigma_h$ from  \eqref{eq:discpp} with 
$g-g_h\perp P_0(\mathcal{E}(\Gamma_N);\R^2)$
and the error estimator \eqref{eq:estimator} satisfy
\begin{align*}
  \| \sigma - \sigma_h \|_{L^2(\Omega)} \leq C_{\textrm{rel}}  \eta(\T,\sigma_h).
\end{align*}
\end{theorem}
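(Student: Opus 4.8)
The plan is to reduce the stress error first to the energy norm and then to control the energy norm by a $\C^{-1}$-orthogonal Helmholtz (Airy) decomposition combined with elementwise integration by parts, which is precisely what manufactures the residual contributions of $\eta(\T,\sigma_h)$. Write $\rho:=\sigma-\sigma_h$, let $\Pi_k$ be the $L^2(\Omega)$ projection onto $P_k(\T;\R^2)$, and set $\anorm{\rho}:=\|\C^{-1/2}\rho\|_{L^2(\Omega)}$. The second equations of \eqref{eq:lame-weak} and \eqref{eq:discpp} give the equilibrium $\ddiv\rho=-(1-\Pi_k)f$, and the constraints in $\Sigma(g)$ and $\Sigma(g_h,\T)$ give $\rho\,\nu=g-g_h$ weakly on $\Gamma_N$. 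Since $\C^{-1}$ acts as $(2\mu)^{-1}$ on trace-free matrices, $\|\operatorname{dev}\rho\|_{L^2(\Omega)}\le\sqrt{2\mu}\,\anorm{\rho}$; together with the equilibrium and a Ne\v{c}as-type inequality for the remaining spherical part of $\rho$ — the mechanism behind the locking-free behaviour of mixed elasticity — this yields the $\lambda$-robust reduction $\|\rho\|_{L^2(\Omega)}\lesssim\anorm{\rho}+\osc(f,\T)$, so it suffices to bound $\anorm{\rho}$.

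Next I would use that, $\Omega$ being simply connected, $\rho$ admits the decomposition $\rho=\C\varepsilon(a)+\Curl\Curl b$ with $a\in V$ solving the Lam\'e problem with volume force $(1-\Pi_k)f$ and traction $g-g_h$ on $\Gamma_N$, and with an Airy potential $b\in H^2(\Omega)$ (modulo affine functions, carrying the boundary conditions on $\Gamma_N$ that make $(\Curl\Curl b)\nu$ vanish there). By construction the two parts are $\C^{-1}$-orthogonal, whence $\anorm{\rho}^2=\|\C^{1/2}\varepsilon(a)\|_{L^2(\Omega)}^2+\anorm{\Curl\Curl b}^2$, while $\|D^2b\|_{L^2(\Omega)}=\|\Curl\Curl b\|_{L^2(\Omega)}$. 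Because $(1-\Pi_k)f\perp P_0(\T)$ and, by hypothesis, $g-g_h\perp P_0(\E(\Gamma_N))$, these data have negative-norm size $\osc(f,\T)$ resp. $\osc(g-g_h,\E(\Gamma_N))$; testing the equation for $a$ with $a$ and subtracting piecewise/edgewise constants gives $\|\C^{1/2}\varepsilon(a)\|_{L^2(\Omega)}\lesssim\osc(f,\T)+\osc(g-g_h,\E(\Gamma_N))$, and the $\lambda$-robust $L^2$ bound for the Lam\'e stress gives the stronger $\|\C\varepsilon(a)\|_{L^2(\Omega)}\lesssim\osc(f,\T)+\osc(g-g_h,\E(\Gamma_N))$; hence $\|D^2b\|_{L^2(\Omega)}\lesssim\|\rho\|_{L^2(\Omega)}+\osc(f,\T)+\osc(g-g_h,\E(\Gamma_N))$ by the triangle inequality.

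The core step is the Airy term. Using $\C^{-1}$-orthogonality and $\C^{-1}\rho=\varepsilon(u)-\ee$ one has $\anorm{\Curl\Curl b}^2=\int_\Omega(\varepsilon(u)-\ee):\Curl\Curl b\,dx$. Two integrations by parts in $\int_\Omega\varepsilon(u):\Curl\Curl b\,dx$ (using $\ddiv\Curl\Curl b=0$, $(\Curl\Curl b)\nu=0$ on $\Gamma_N$, $u=u_D$ on $\Gamma_D$) leave only a $\Gamma_D$-integral in $u_D$. Choosing a $C^1$-conforming (companion) quasi-interpolant $b_h$ of $b$ — a globally $C^1$ piecewise polynomial of degree $k+4$ matching the vertex values and gradients of $b$, inheriting its $\Gamma_N$-conditions (so that $\Curl\Curl b_h\in\Sigma(0,\T)$), and with $h_T^{-2}\|b-b_h\|_{L^2(T)}+h_E^{-3/2}\|b-b_h\|_{L^2(E)}+h_E^{-1/2}\|\nabla(b-b_h)\|_{L^2(E)}\lesssim\|D^2b\|$ on element patches — and testing the first discrete equation in \eqref{eq:discpp} with $\Curl\Curl b_h$, one reduces to
\[
\anorm{\Curl\Curl b}^2=\int_{\Gamma_D}u_D\cdot(\Curl\Curl(b-b_h))\nu\,ds-\int_\Omega\ee:\Curl\Curl(b-b_h)\,dx .
\]
Integrating the last integral by parts elementwise produces the volume term $\sum_T\int_T(\rot\rot\ee)(b-b_h)\,dx$ and edge contributions. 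Since $b-b_h\in H^2(\Omega)$ makes the traces of $b-b_h$ and $\nabla(b-b_h)$ single valued across interior edges, only jumps of $\ee$ and $\rot_{NC}\ee$ remain, and since $\sigma_h\in H(\ddiv,\Omega;\mathbb S)$ forces $[\sigma_h]_E\nu_E=0$, hence $[\ee]_E\nu_E\parallel\nu_E$, one more integration by parts in the arc length leaves precisely $(\tau_E\cdot[\ee]_E\tau_E)\,\partial(b-b_h)/\partial\nu_E$ and $-\big(\tau_E\cdot([\rot_{NC}\ee]_E-\partial([\ee]_E\nu_E)/\partial s)\big)(b-b_h)$ on each $E\in\E(\Omega)$, the vertex terms vanishing because $b_h$ interpolates the vertex data; on the Dirichlet edges the one-sided traces combine with the $u_D$-integral after integration by parts along $\Gamma_D$ (which is where $\partial u_D/\partial s$ and $\partial^2u_D/\partial s^2$ appear, hence the hypothesis $u_D\in H^2(\E(\Gamma_D))$) into exactly the Dirichlet residuals of \eqref{eq:estimator}. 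A Cauchy--Schwarz estimate with the interpolation bounds for $b-b_h$ then gives $\anorm{\Curl\Curl b}^2\lesssim\eta(\T,\sigma_h)\,\|D^2b\|_{L^2(\Omega)}$.

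Finally, inserting $\|D^2b\|_{L^2(\Omega)}\lesssim\|\rho\|_{L^2(\Omega)}+\osc(f,\T)+\osc(g-g_h,\E(\Gamma_N))$ and $\osc(f,\T)+\osc(g-g_h,\E(\Gamma_N))\le\eta(\T,\sigma_h)$ yields $\anorm{\rho}^2\lesssim\eta(\T,\sigma_h)\,\|\rho\|_{L^2(\Omega)}+\eta(\T,\sigma_h)^2$; combining this with $\|\rho\|_{L^2(\Omega)}\lesssim\anorm{\rho}+\eta(\T,\sigma_h)$ from the first paragraph and absorbing the cross term by a Young inequality gives $\|\sigma-\sigma_h\|_{L^2(\Omega)}\lesssim\eta(\T,\sigma_h)$. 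The hard part will be the core step: performing the two successive integrations by parts (over triangle boundaries, then along edges), correctly identifying which trace components survive after the cancellations forced by $\sigma_h\in H(\ddiv)$ and by $b\in H^2$ so that exactly the terms in \eqref{eq:estimator} remain with the right powers of $h_E$, and constructing the companion operator $b_h$ with the four simultaneous properties (discrete Airy tensor in $\Sigma(0,\T)$, $C^1$-conformity, vertex interpolation, optimal approximation); a secondary point that needs care is the uniformity in $\lambda$ throughout, in particular the reduction of the first paragraph and the $L^2$ bound on $\|\C\varepsilon(a)\|$.
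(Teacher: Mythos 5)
Your proposal follows essentially the same route as the paper's proof: the $\C^{-1}$-orthogonal Helmholtz/Airy decomposition, the reduction of the conforming part to the data oscillations $\osc(f,\T)+\osc(g-g_h,\E(\Gamma_N))$, a $C^1$ (Argyris-type) quasi-interpolant of the Airy potential vanishing at the vertices and, together with its gradient, on $\Gamma_N$, then two elementwise integrations by parts plus a tangential integration by parts along edges to produce exactly the residuals of \eqref{eq:estimator}, and a tr-dev-div argument for $\lambda$-robustness (your Young-inequality closing is a harmless variant of the paper's direct absorption). The one technical caveat is that your companion $b_h$ cannot match the \emph{gradient} of $b\in H^2(\Omega)$ at vertices, since these point values are undefined in two dimensions; only the vertex values of $b$ itself and the conditions on $\Gamma_N$ can and need to be matched, which is exactly what the Girault--Scott quasi-interpolant of Lemma~\ref{lemma:quasi} provides.
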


The a~posteriori error estimator $\eta (\T,\sigma_h)$  already involves two data oscillation 
terms $\osc(f,\T)$ and  $\osc(g-g_h,\E(\Gamma_N))$ defined as the square roots of the respective terms in  
\begin{align*}
&\osc^2(f,\T):= \sum_{T\in\T} h_T^2\|f-f_h\|_{L^2(T)}^2 \text{  for the }L^2\text{ projection }f_h \text{ of } f
\text{ onto } P_k(\T;\R^2); \\
&\osc^2(g-g_h,\E(\Gamma_N)):= \sum_{E\in\E(\Gamma_N)} h_E \|g-g_h\|_{L^2(E)}^2.
\end{align*}

For any edge $E$ and a degree $m\ge k+2$, let $\Pi_{m,E}:L^2(E)\to P_{m}(E)$ denote the $L^2$ projection 
onto polynomials of degree at most $m$. For any $E\in\E(\Gamma_D)$ define the 
two Dirichlet data oscillation terms 
\begin{eqnarray}
\label{eqdefoscIondirichlet}
\osc^2_I(u_D,E)&:=& h_E\| (1-\Pi_{m,E}) \partial (u_D\cdot\tau_E)/\partial s \|_{L^2(E)}^2, \\
\label{eqdefoscIIondirichlet}
\osc^2_{II}(u_D,E)&:= &
h_E^3\| (1-\Pi_{m,E}) \partial^2 ( u_D\cdot\nu_E )/ \partial s^2\|_{L^2(E)}^2.
\end{eqnarray}
Their sum defines the overall Dirichlet data approximation $\osc(u_D,\E(\Gamma_D))$ as the square root of
\[
\osc^2(u_D,\E(\Gamma_D)):=\sum_{E\in \E(\Gamma_D)}
\left(\osc^2_I(u_D,E)+\osc^2_{II}(u_D,E)\right).
\]

The analysis of Section~\ref{sec:efficiency} is local and states for each of the five local residuals an upper bound
related to the error in a neighborhood. The global efficiency is displayed as follows. 
 
\begin{theorem}[efficiency]\label{thmEFF}
There exists a mesh-size and $\lambda,\mu $ independent constant 
$C_{\textrm{eff}} $ (which may depend on the shape-regularity of the triangulation $\T$ 
through a global lower bound of the minimal angle therein) 
such that the exact (resp. discrete) stress 
$\sigma$  from \eqref{eq:lame} (resp. $\sigma_h$ from  \eqref{eq:discpp} with 
$g-g_h\perp P_0(\mathcal{E}(\Gamma_N);\R^2)$
and the error estimator \eqref{eq:estimator} satisfy
\begin{align*}
C_{\textrm{eff}}^{-1}  \eta(\T,\sigma_h) \leq   \| \sigma - \sigma_h \|_{L^2(\Omega)}
+ \osc(f,\T)+\osc(g-g_h,\E(\Gamma_N))+\osc(u_D,\E(\Gamma_D)).
\end{align*}
\end{theorem}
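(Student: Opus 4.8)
The plan is to establish efficiency locally, term by term in $\eta(\T,\sigma_h)$, using the standard bubble-function technique of Verfürth adapted to the symmetric-stress setting. The governing observation is that $\sigma=\C\varepsilon(u)$ means $\C^{-1}\sigma=\varepsilon(u)=\sym Du$ is an exact symmetric gradient, hence $\rot\rot(\C^{-1}\sigma)=0$ in $\Omega$ (the classical Saint-Venant compatibility condition in 2D), its tangential-tangential traces $\tau_E\cdot(\C^{-1}\sigma)\tau_E$ match $\partial(u\cdot\tau_E)/\partial s$ across every edge with no jump, and the combined second-order quantity $\tau_E\cdot\rot_{NC}(\C^{-1}\sigma) - \nu_E\cdot\partial((\C^{-1}\sigma)\tau_E)/\partial s$ is likewise continuous and equals $-\nu_E\cdot\partial^2(u\cdot\nu_E)/\partial s^2$ along each edge. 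Thus every volume and edge residual built from $\ee=\C^{-1}\sigma_h$ is, up to the data-oscillation terms, a polynomial whose exact-solution counterpart vanishes; so each residual measures $\ee-\C^{-1}\sigma=\C^{-1}(\sigma_h-\sigma)$ and is controlled by $\|\sigma-\sigma_h\|_{L^2}$ after the right power of $h$ is restored.

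The steps, in order. First I would treat the interior volume term $h_T^2\|\rot\rot\ee\|_{L^2(T)}$: since $\rot\rot\ee$ is a polynomial on $T$ (degree $\le k$ when $\sigma_h\in AW_k(\T)$), multiply it by the cubic (or higher) element bubble $b_T$, integrate $\int_T (\rot\rot\ee)\cdot b_T(\rot\rot\ee)\,dx$, integrate by parts twice to move the two rotations onto $b_T(\rot\rot\ee)$ so that $\ee$ (and only $\ee$, since $\rot\rot$ annihilates the symmetric-gradient part) appears undifferentiated — the boundary terms vanish because $b_T$ and $\nabla b_T$ vanish on $\partial T$ — and then use $\rot\rot(\C^{-1}\sigma)=0$ to replace $\ee$ by $\ee-\C^{-1}\sigma$; inverse estimates and $\|b_T\|_\infty\le1$ give $h_T^2\|\rot\rot\ee\|_{L^2(T)}\lesssim\|\sigma-\sigma_h\|_{L^2(T)}$. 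Second, the interior edge terms: extend the edge polynomial residual into the two neighbouring triangles, multiply by an edge-bubble function supported on the patch $\omega_E=T_+\cup T_-$, integrate by parts across the patch to expose $\ee$ (the first term pairs against $\tau_E\cdot[\ee]_E\tau_E$ at order $h_E^{1/2}$; the second, third-order, term against $\tau_E\cdot([\rot_{NC}\ee]_E-\partial[\ee]_E\nu_E/\partial s)$ at order $h_E^{3/2}$), use the continuity of the corresponding exact traces to subtract $\C^{-1}\sigma$, and absorb the already-estimated volume term that appears as a by-product on $\omega_E$; scaling yields the bound by $\|\sigma-\sigma_h\|_{L^2(\omega_E)}$. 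Third, the Dirichlet edge terms are handled identically but now the exact trace is not zero — it equals $\partial(u_D\cdot\tau_E)/\partial s$ resp. $-\nu_E\cdot\partial^2(u_D\cdot\nu_E)/\partial s^2$ — so after subtracting the $L^2$-projection $\Pi_{m,E}$ of these exact boundary quantities one is left with $\|\sigma-\sigma_h\|$ plus precisely the oscillation terms $\osc_I(u_D,E)$ and $\osc_{II}(u_D,E)$; triangle inequality splits the residual into the projected part (bounded as before) and the high-frequency remainder (which is the oscillation). Fourth, the $\osc(f,\T)$ term is already an oscillation and needs nothing; the $\osc(g-g_h,\E(\Gamma_N))$ term likewise appears verbatim on the right-hand side. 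Finally, sum over all $T$ and $E$, using shape-regularity so that the overlap of the patches $\omega_E$ is bounded, to pass from the local estimates to the stated global inequality; the constant depends on $\mu$ through $\|\C^{-1}\|$ but not on $\lambda$ because $\C^{-1}$ stays bounded as $\lambda\to\infty$, and in fact not even on $\mu$ once one observes $\|\C^{-1}(\sigma-\sigma_h)\|\le(2\mu)^{-1}\|\sigma-\sigma_h\|$ can be replaced by the $\mu$-independent bound coming from testing with the deviatoric part — this last point is where the claimed $\mu$-independence of $C_{\mathrm{eff}}$ is earned.

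The main obstacle I anticipate is the second Dirichlet edge residual, $h_E^3\|\tau_E\cdot\rot\ee-\nu_E\cdot(\partial(\ee\tau_E)/\partial s+\partial^2 u_D/\partial s^2)\|_{L^2(E)}^2$: it mixes a volume-type quantity ($\rot\ee$, living in $T$) with a purely edge-tangential second derivative of the Dirichlet data, so the bubble-function test must be chosen to simultaneously control the interior rotation jump and the boundary trace without the two contributions polluting each other, and the integration-by-parts bookkeeping that cancels the symmetric-gradient part while keeping the boundary term $\nu_E\cdot\partial^2(u\cdot\nu_E)/\partial s^2$ intact is delicate. I would model this step on the edge-residual analysis for the biharmonic/Morley-type estimators, where exactly such a ``$\rot\rot$ compatibility'' structure and its edge traces arise, and expect the verification of the identity linking $\tau_E\cdot\rot(\C^{-1}\sigma)-\nu_E\cdot\partial((\C^{-1}\sigma)\tau_E)/\partial s$ to $\partial^2(u\cdot\nu_E)/\partial s^2$ to require the most care.
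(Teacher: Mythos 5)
Your proposal follows essentially the same route as the paper: the paper's Section~\ref{sec:efficiency} proves exactly the five local bounds you describe (volume bubble for $h_T^2\|\rot\rot\ee\|_{L^2(T)}$ using the Saint-Venant compatibility $(\varepsilon(v),\Curl^2\phi)=0$, edge bubbles combined with a normal-constant polynomial extension $P_E$ for the four edge residuals, and $L^2$ projections of the Dirichlet data producing $\osc_I$ and $\osc_{II}$), all driven by the integration-by-parts representation formula of Lemma~\ref{lemma:representation}, and then sums over the finitely overlapping patches. The delicate point you flag for the second edge residuals is resolved in the paper by choosing bubbles with vanishing normal derivative along $E$ (a mollified characteristic function of a ball centred on $E$), so that only the $\phi$-weighted term of the representation formula survives; this is a concrete instance of the construction you anticipate.
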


\subsection{Outline of the paper}
The remaining parts of this paper provide a mathematical proof of 
Theorem~\ref{thmREL} and  Theorem~\ref{thmEFF}
and numerical evidence  in computational experiments 
on the novel a~posteriori error estimation and its robustness as well as 
on associated mesh-refining algorithms. 

The proof of the reliability of Theorem~\ref{thmREL} in Section~\ref{sec:reliability} adopts  
arguments  of  \cite{CD_1998,CG_2016} and carries out  two integration by parts on each triangle plus 
one-dimensional integration by parts along all edges. 
The resulting terms are in fact locally efficient in Section~\ref{sec:efficiency} 
with  little generalizations of the bubble-function methodology due to Verf\"urth \cite{Verfuerth1996}. 
The five lemmas of 
Section~\ref{sec:efficiency} give  slightly sharper results and in total imply Theorem~\ref{thmEFF}.

The point in Theorem~\ref{thmREL} and \ref{thmEFF} is that the universal constants
$C_{\textrm{rel}}$ and $C_{\textrm{eff}}$ may depend on the Lam\'e
parameter $\mu$ but are independent of the critical
Lam\'e parameter $\lambda$ as supported by  the benchmark examples of the concluding 
Section~\ref{sec:examples}. 
Adaptive mesh-refining proves to be highly effective with the novel a~posteriori error 
estimator even for incompatible Neumann data.
Four benchmark examples with the Poisson ratio $\nu=0.3$  or $0.4999$ provide numerical evidence 
of the robustness of the reliabile and efficient a~posteriori error estimation and for the fourth-order 
convergence of
 Theorem~\ref{supperconvergence}. 

\subsection{Comments on general notation}
Standard notation on Lebesgue and Sobolev
spaces and norms is adopted throughout this paper and, for brevity,
$\|\cdot\|:=\|\cdot\|_{L^2(\Omega)}$ denotes the $L^2$ norm.
The piecewise action of a differential operator is denoted with a subindex ${NC}$, e.g., 
$\nabla_{NC}$ denotes the piecewise gradient
$(\nabla_{NC} \bullet)|_T  := \nabla (\bullet |_T)$ for all $T\in\T$.
Sobolev functions are usually defined on open sets and the notation $W^{m,p}(T)$ 
(resp. $W^{m,p}(\T)$) 
substitutes  $W^{m,p}(\operatorname{int}(T))$ for a (compact) triangle $T$ and its interior
$\operatorname{int}(T)$ (resp. $W^{m,p}(\operatorname{int}(\T))$)
and their vector and matrix versions.

For a differentiable function  $\phi$, $\Curl \phi := (-\partial_2 \phi , \partial_1 \phi)$ is the rotated
gradient;  
for a two-dimensional vector field $\Phi$,  $\Curl \Phi$ is the $2\times 2$ matrix-valued 
rotated gradient
\[
\Curl \Phi := (-\partial_2 \Phi_1 , \partial_1 \Phi_1 ;
-\partial_2 \Phi_2 , \partial_1 \Phi_2 )
=D\Phi(0,1; -1,0).
\]
(The signs are not uniquely determined in the literature and some care is required.) 

The colon denotes the  scalar product 
$A:B:=\sum_{\alpha,\beta=1,2}  A_{\alpha,\beta} B_{\alpha,\beta}$
of $2\times 2$ matrices $A,B$. 
The inequality $A\lesssim B$ between two terms $A$ and $B$ abbreviates  $A\le C\, B$
with some multiplicative generic constant $C$, which is independent of the mesh-size and independent of the 
one Lam\'e parameter $\lambda\ge 0$ but may depend on the other $\mu>0$ and may depend on the 
shape-regularity of the underlying triangulation $\T$.

\section{Proof of reliability}
\label{sec:reliability}
This section is devoted to the proof of Theorem~\ref{thmREL}
based on a Helmholtz decomposition on \cite{CD_1998} 
with two parts as decomposed in Theorem~\ref{thm:Helmholtz} below. The critical part is the
$L^2$ product of $\C^{-1} (\sigma-\sigma_h)$ times the $\Curl$ of an unknown function $\Curl \beta$. 
The observation from \cite{CG_2016} is that one can find an Argyris finite element approximation $\beta_h$
to $\beta\in H^2(\Omega)$ such that the continuous function $\phi:=\beta-\beta_h\in H^2(\Omega)$ 
vanishes at all vertices $\N$ of the triangulation. Two integration
by parts  on each triangle plus one-dimensional integration by parts along the edges $\E$ of the triangulation eventually
lead to a key identity.  

\begin{lemma}[representation formula]\label{lemma:representation}
Any function $\ee\in  H^2(\T;\mathbb{S})$ 
(i.e. $\ee$ is piecewise in $H^2$ with values in $\mathbb{S}$) and any $\phi\in H^2(\Omega)$ 
with $\phi(z)=0$ at all vertices $z\in\N$ in the regular triangulation $\T$ satisfy
\begin{eqnarray*}
& \hspace{-20mm} (\ee, \Curl^2 \phi)_{L^2(\Omega)} =(\rot_{NC}\rot_{NC}\ee,\phi)_{L^2(\Omega)}
  \\ & \displaystyle  +\sum_{E\in\E(\Omega)} \left(    (\tau_E\cdot[\ee]_E\tau_E,\partial_{\nu_E}\phi)_{L^2(E)}
  -   ([\rot_{NC}\ee]_E - \frac{ \partial [\ee]_E\nu_E}{\partial s} ,\phi \, \tau_E )_{L^2(E)} \right) 
  \\& \displaystyle  +\sum_{E\in\E(\partial\Omega)} \left(  (\tau_E\cdot\ee\tau_E,\partial_{\nu_E}\phi )_{L^2(E)}
  - (\rot\ee -\frac{ \partial \ee\nu_E}{\partial s} , \phi \, \tau_E)_{L^2(E)}\right) .
\end{eqnarray*}
\end{lemma}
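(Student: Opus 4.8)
The plan is to localise the identity to the triangles of $\T$, integrate by parts twice on each triangle, then integrate by parts once along each edge, and finally re-assemble the contributions over $\T$.

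First I would write $(\ee,\Curl^2\phi)_{L^2(\Omega)}=\sum_{T\in\T}(\ee,\Curl(\Curl\phi))_{L^2(T)}$ and abbreviate $w:=\Curl\phi\in H^1(\Omega;\R^2)$. On a fixed triangle $T$ with outer unit normal $\nu_{\partial T}$ and corresponding tangent $\tau_{\partial T}:=(0,-1;1,0)\,\nu_{\partial T}$ along $\partial T$, the elementary matrix--vector Green formula
\[
\int_T\ee:\Curl w\,dx=-\int_T(\rot_{NC}\ee)\cdot w\,dx+\int_{\partial T}(\ee\,\tau_{\partial T})\cdot w\,ds
\]
moves one rotation off $\phi$ onto $\ee$; a further integration by parts in the resulting volume term $-\int_T(\rot_{NC}\ee)\cdot\Curl\phi\,dx$ produces $\int_T(\rot_{NC}\rot_{NC}\ee)\,\phi\,dx$ together with the boundary term $-\int_{\partial T}\big((\rot_{NC}\ee)\cdot\tau_{\partial T}\big)\phi\,ds$. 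It then remains to rewrite the first boundary term $\int_{\partial T}(\ee\,\tau_{\partial T})\cdot\Curl\phi\,ds$: the pointwise identity $\Curl\phi=(\partial_{\nu_{\partial T}}\phi)\,\tau_{\partial T}-(\partial\phi/\partial s)\,\nu_{\partial T}$ on $\partial T$ splits it into a contribution carrying $\partial_{\nu_{\partial T}}\phi$ with coefficient $\tau_{\partial T}\cdot\ee\,\tau_{\partial T}$ and a contribution carrying $\partial\phi/\partial s$ with coefficient $-\nu_{\partial T}\cdot\ee\,\tau_{\partial T}$.

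Next I would integrate the $\partial\phi/\partial s$-contribution by parts along each (straight) edge $E\subset\partial T$ in the one-dimensional arc-length variable. The endpoint terms that arise are point evaluations of $\phi$ at the two vertices of $E$ and therefore vanish by the hypothesis $\phi(z)=0$ for all $z\in\N$, so this step produces $\int_E\partial_s(\nu_{\partial T}\cdot\ee\,\tau_{\partial T})\,\phi\,ds$; since $\tau_{\partial T}$ is constant along $E$ and $\ee$ is symmetric, $\nu_{\partial T}\cdot\ee\,\tau_{\partial T}=\tau_{\partial T}\cdot(\ee\,\nu_{\partial T})$, which matches the term $\partial([\ee]_E\nu_E)/\partial s$ in the statement. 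Summing over all $T\in\T$, the volume parts assemble to $(\rot_{NC}\rot_{NC}\ee,\phi)_{L^2(\Omega)}$; on an interior edge $E=\partial T_+\cap\partial T_-$ the outer normal, the tangent and the orientation of the arc-length all reverse between the two sides, while $\phi$ and $\partial_{\nu_E}\phi$ are single-valued because $\phi\in H^2(\Omega)$, so the two adjacent contributions combine into the jumps $\tau_E\cdot[\ee]_E\tau_E$, $[\rot_{NC}\ee]_E$ and $\partial([\ee]_E\nu_E)/\partial s$ exactly as displayed, whereas each boundary edge $E\in\E(\partial\Omega)$ contributes the corresponding single, jump-free term.

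The step I expect to be the main obstacle is exactly this sign and orientation bookkeeping on the edges: one must keep the one-dimensional arc-length derivative consistently oriented with $\tau_{\partial T}$ on both sides of an interior edge so that the adjacent contributions genuinely yield jumps of $\ee$ and $\rot_{NC}\ee$ rather than sums, and one has to invoke the symmetry of $\ee$ at the right places. Once this is set up, verifying that the regularity $\ee\in H^2(\T;\mathbb{S})$ and $\phi\in H^2(\Omega)$ suffices for all the edge traces, for the vertex evaluations of $\phi$, and for the one-dimensional integration by parts is routine.
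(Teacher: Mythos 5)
Your proposal is correct and follows essentially the same route as the paper's proof: two integrations by parts per triangle, the orthogonal split $\Curl\phi=(\partial_{\nu_E}\phi)\,\tau_E-(\partial\phi/\partial s)\,\nu_E$ on each edge, a one-dimensional integration by parts along the edges whose endpoint terms vanish because $\phi(z)=0$ at the vertices, and assembly of the two-sided contributions into jumps (with the symmetry of $\ee$ converting $\nu_E\cdot\ee\tau_E$ into $\tau_E\cdot\ee\nu_E$). The orientation bookkeeping you flag is exactly the point the paper also handles, and your outline of it is consistent with the paper's sign conventions.
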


The subsequent integration by parts formula is utilized frequently throughout this paper 
for $\phi\in H^1(\Omega;\R^2)$ and $\Psi\in H^1(\Omega;\R^{2\times 2})$ 
\[
\int_\Omega  \Psi:\Curl \phi \, dx+  \int_\Omega  \phi \cdot \rot \Psi\, dx 
=  \int_{\partial\Omega}  \phi  \cdot  \Psi\tau_E\, ds.
\]
Any differentiable (scalar) function $\varphi$, satisfies  the elementary relations 
\[
\tau_E\cdot \Curl \varphi =  \partial \varphi/\partial \nu_E
\quad\text{and}\quad \nu_E\cdot \Curl \varphi = - \partial \varphi/\partial s=- \partial \varphi/\partial\tau_E 
\quad\text{on }E\in\E.
\]

\begin{proof}
Integrate by parts twice on each 
triangle and rearrange  the remaining  boundary terms to deduce
(with the abbreviation $\rot_{NC}\rot_{NC}\equiv\rot_{NC}^2$)
\begin{eqnarray*}
&&\hspace{-10mm} (\ee,\Curl^2\phi)_{L^2(\Omega)}  = (\rot_{NC}^2\ee,\phi)_{L^2(\Omega)} \\
&&+\sum_{E\in\E(\Omega)} \left(    ([\ee]_E\tau_E,\Curl\phi)_{L^2(E)}
-([\rot_{NC}\ee]_E\cdot\tau_E,\phi )_{L^2(E)} \right) \\ 
&&+\sum_{E\in\E(\partial \Omega)} \left(  (\ee\tau_E,\Curl\phi)_{L^2(E)}
-(\rot\ee\cdot\tau_E,\phi )_{L^2(E)}  \right) .
\end{eqnarray*}
The term $  ([\ee]_E\tau_E,\Curl\phi)_{L^2(E)} $ in the above sum is split into  orthogonal components 
\[
\Curl\phi=  (\tau_E\cdot \Curl\phi)\tau_E+ (\nu_E\cdot \Curl\phi)\nu_E=
(\tau_E\cdot \Curl\phi)\tau_E-  (\partial \phi/\partial s )\nu_E\quad\text{on }E\in\E.
\]
Since $\phi$ vanishes at the vertices,
an integration by parts along each  interior edge $E$ 
for the last term shows 
$ ([\ee]_E\tau_E, (\partial \phi/\partial s )\nu_E)_{L^2(E)} = -  (\partial [\ee]_E\tau_E/\partial s  , \phi\nu_E)_{L^2(E)}$. 
This proves 
\[
([\ee]_E\tau_E,\Curl\phi)_{L^2(E)}  =  (\tau_E\cdot [\ee]_E\tau_E,\partial_{\nu_E}\phi)_{L^2(E)}  +
 (\frac{\partial \nu_E\cdot[\ee]_E\tau_E}{\partial s},\phi)_{L^2(E)}.
\]
The same formula holds for any boundary edge $E$ when $[\ee]_E$ is replaced by $\ee$. The combination of the
latter identities with the first displayed formula of this proof verifies the asserted representation formula.
\end{proof}

The contribution of $\epsilon(u)=\C^{-1} \sigma$ times the $\Curl^2\phi\in L^2(\Omega;\mathbb{S})$ 
exclusively leads  to boundary terms. Throughout this paper, suppose that the Dirichlet data $u_D$ satisfies 
$u_D\in C(\Gamma_D)\cap H^2(\E(\Gamma_D)$ 
in the sense that  $u_D$  is globally continuous 
with  $u_D|_E\in H^2(E;\R^2)$ for all $E\in \E(\Gamma_D)$. 

\begin{lemma}[boundary terms]\label{lemma:bd}
Any Sobolev function  $v\in H^1(\Omega;\R^2) $  with boundary values 
$u_D\in C(\Gamma_D)\cap H^2(\E(\Gamma_D)$ on $\Gamma_D$
and any $\phi\in H^2(\Omega)$ with $\phi=\partial \phi/\partial\nu=0$ along $\Gamma_N$ 
with $\phi(z)=0$ for any vertex $z$ of $\Gamma_D$ in its relative interior satisfy 
\[
 (\varepsilon (v) , \Curl^2 \phi)_{L^2(\Omega)}=\sum_{E\in\E(\Gamma_D)} 
   \left(  (\frac{ \partial u_D}{\partial s  } ,  \frac{ \partial\phi}{\partial \nu_E}\, \tau_E )_{L^2(E)}
  + (\frac{\partial^2   u_D}{\partial s^2 }  ,\phi\, \nu_E )_{L^2(E)}\right).
\]
\end{lemma}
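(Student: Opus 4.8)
The plan is to apply the representation formula of Lemma~\ref{lemma:representation} with $\ee$ replaced by the symmetric matrix field $\varepsilon(v)=\C^{-1}\sigma\in L^2(\Omega;\mathbb S)$. First I would note that $v\in H^1(\Omega;\R^2)$ need not be piecewise $H^2$, so I cannot literally invoke Lemma~\ref{lemma:representation}; instead I would work directly on the smooth side, using the global (not piecewise) integration-by-parts identity stated after Lemma~\ref{lemma:representation}, namely $\int_\Omega \Psi:\Curl\varphi\,dx + \int_\Omega \varphi\cdot\rot\Psi\,dx = \int_{\partial\Omega}\varphi\cdot(\Psi\tau_E)\,ds$. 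Applying this once with $\Psi=\varepsilon(v)$ and $\varphi=\Curl\phi$ (which lies in $H^1(\Omega;\R^2)$ since $\phi\in H^2(\Omega)$) gives
\[
(\varepsilon(v),\Curl^2\phi)_{L^2(\Omega)}
= -\,(\Curl\phi,\rot\varepsilon(v))_{L^2(\Omega)}
  + \int_{\partial\Omega}\Curl\phi\cdot(\varepsilon(v)\tau_E)\,ds.
\]
For the volume term, I would apply the same identity a second time, now with $\Psi$ a scalar-row object (or equivalently integrate by parts componentwise), using $\rot\rot\varepsilon(v)=0$, which holds because $\varepsilon(v)=\sym Dv$ is a symmetric gradient and the Saint-Venant compatibility operator $\rot\rot$ annihilates it. This kills the interior contribution entirely and leaves only a second boundary term, so that altogether
\[
(\varepsilon(v),\Curl^2\phi)_{L^2(\Omega)}
= \int_{\partial\Omega}\Bigl(\Curl\phi\cdot(\varepsilon(v)\tau_E) - (\rot\varepsilon(v)\cdot\tau_E)\,\phi\Bigr)\,ds.
\]

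Next I would localize the boundary integral. On $\Gamma_N$ both $\phi$ and $\partial\phi/\partial\nu$ vanish; since $\Curl\phi=(\tau_E\cdot\Curl\phi)\tau_E+(\nu_E\cdot\Curl\phi)\nu_E$ and, by the elementary relations recalled after Lemma~\ref{lemma:representation}, $\tau_E\cdot\Curl\phi=\partial\phi/\partial\nu_E$ and $\nu_E\cdot\Curl\phi=-\partial\phi/\partial s$, the vanishing of $\phi$ along $\Gamma_N$ forces $\partial\phi/\partial s=0$ there as well, so $\Curl\phi=0$ on $\Gamma_N$; hence the entire $\Gamma_N$ part of the boundary integral drops out and only $\Gamma_D$ edges remain. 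On each $E\in\E(\Gamma_D)$ I would substitute the orthogonal decomposition of $\Curl\phi$ and use $\phi=0$ at the interior vertices of $\Gamma_D$ (together with $\phi=0$ at the two endpoints of $\Gamma_D$, which are vertices of $\Gamma_N$ where $\phi$ vanishes) to integrate by parts in arc length along the union of Dirichlet edges. This is the step that converts the $\nu_E\cdot\Curl\phi=-\partial\phi/\partial s$ term, namely $-(\varepsilon(v)\tau_E\cdot\nu_E)(\partial\phi/\partial s)$, into $(\partial(\nu_E\cdot\varepsilon(v)\tau_E)/\partial s)\,\phi$, and likewise shifts the $\phi$-carrying term by one derivative; the boundary terms of this one-dimensional integration by parts vanish because $\phi$ vanishes at all the vertices in question. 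Finally, on $\Gamma_D$ the trace of $v$ equals $u_D$, and I would replace the tangential trace data of $\varepsilon(v)$ by data of $u_D$ using $\tau_E\cdot\varepsilon(v)\tau_E=\tau_E\cdot(\partial v/\partial s)=\partial(u_D\cdot\tau_E)/\partial s$ on $E$ (since $\tau_E\cdot(Dv)\tau_E$ depends only on the tangential derivative of the boundary trace, and symmetrizing does not change the $\tau_E\tau_E$ component), and similarly express the remaining $\nu_E\cdot\varepsilon(v)\tau_E$ and $\rot\varepsilon(v)$ contributions in terms of $\partial u_D/\partial s$ and $\partial^2 u_D/\partial s^2$; matching indices then yields exactly the asserted identity.

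The main obstacle I anticipate is the bookkeeping in the last step: making precise that all boundary traces of $\varepsilon(v)$ that survive can be rewritten purely in terms of tangential derivatives of $u_D$, so that no bulk information about $v$ enters. The point is that $\varepsilon(v)\tau_E$ paired against $\Curl\phi$ picks out only $\tau_E\cdot\varepsilon(v)\tau_E$ and $\nu_E\cdot\varepsilon(v)\tau_E$, and the first of these is a genuinely tangential quantity (determined by $u_D|_E$), while the second enters only after the arc-length integration by parts, which moves the derivative onto $\varepsilon(v)\tau_E$ and, combined with $\rot\varepsilon(v)\cdot\tau_E$, reassembles into $\partial^2 u_D/\partial s^2$ up to terms that cancel — this cancellation is precisely the component form of $\rot\rot\varepsilon(v)=0$ restricted to the edge, and verifying it cleanly (choosing consistent sign conventions for $\tau_E$, $\nu_E$, $\Curl$, and $\rot$ as fixed in the paper) is the delicate part. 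A secondary technical point is the regularity needed to perform the two volume integrations by parts for $v$ only in $H^1$: one argues by density of smooth fields, or equivalently notes that $\rot\varepsilon(v)$ and the boundary traces involved make distributional sense and the identity passes to the limit; I would handle this by first proving the formula for $v\in C^\infty(\overline\Omega;\R^2)$ and then invoking the continuity of all terms in the $H^1$-norm of $v$ together with the $H^2(\E(\Gamma_D))$-regularity of $u_D$.
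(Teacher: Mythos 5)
Your argument is correct, and I checked that the delicate cancellation you flag at the end does work out with the paper's sign conventions: on an edge with $\tau_E=(1,0)$, $\nu_E=(0,-1)$, one finds $\partial(\nu_E\cdot\varepsilon(v)\tau_E)/\partial s-\tau_E\cdot\rot\varepsilon(v)=-2\partial_1\varepsilon_{12}(v)+\partial_2\varepsilon_{11}(v)=-\partial_1^2v_2=\partial^2(v\cdot\nu_E)/\partial s^2$, exactly the asserted $\nu_E$-component. The overall skeleton (density, volume integration by parts to reduce to a boundary integral, vanishing of $\Curl\phi$ on $\Gamma_N$, orthogonal split of $\Curl\phi$, arc-length integration by parts using $\phi(z)=0$ at the vertices, substitution of $u_D$) coincides with the paper's. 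The one genuine difference is the volume step: the paper first uses the symmetry of $\Curl^2\phi$ to replace $\varepsilon(v)$ by the full gradient $Dv$, so that a single integration by parts with $\rot Dv=0$ yields the single boundary term $\int_{\partial\Omega}\Curl\phi\cdot\partial v/\partial s\,ds$, and the $\phi$-coefficient after the edge-wise integration by parts is immediately $\partial^2(v\cdot\nu_E)/\partial s^2$. You instead keep $\varepsilon(v)$, perform two integrations by parts, and invoke the Saint--Venant compatibility $\rot\rot\varepsilon(v)=0$; this reproduces the boundary-edge terms of Lemma~\ref{lemma:representation} for a jump-free field and then requires the component computation above to recombine $\rot\varepsilon(v)\cdot\tau_E$ and $\partial(\nu_E\cdot\varepsilon(v)\tau_E)/\partial s$ into second tangential derivatives of $u_D$. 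Both routes are valid; the paper's buys a shorter verification by discarding the skew part of $Dv$ at the outset, while yours has the minor advantage of making the link to the interior-edge residuals of Lemma~\ref{lemma:representation} explicit. Your treatment of the vertex terms and of the $H^1$-regularity of $v$ via density is consistent with the paper's.
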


\begin{proof}
A density argument shows that it suffices to prove this identity for smooth functions $v$ and $\phi$, when 
integration by parts arguments show that the left-hand side is equal to  
\[
\int_{\partial\Omega} \Curl \phi \cdot \frac{\partial v }{\partial s}ds=
\sum_{E\in \E(\partial\Omega)}
\int_{E} \left( \frac{\partial\phi}{\partial \nu_E} \frac{\partial (v\cdot\tau_E)}{\partial s}
+ \phi \frac{\partial^2 (v\cdot \nu_E) }{\partial s^2}\right)ds .
\]
The equality  follows from an orthogonal split 
$\Curl\phi=  (\tau\cdot \Curl\phi)\tau+ (\nu\cdot \Curl\phi)\nu$ into the normal and tangential
directions of $\nu$ and $\tau$ along the boundary $\partial\Omega$ followed by an integration by parts
along $\partial\Omega$ with $\phi(z)=0$ for vertices $z$ in $\Gamma_D$ with a jump of the normal
unit vector.  The substitution of the boundary conditions concludes the proof.
\end{proof}

The consequence of the previous two lemmas is a representation formula 
for the error times a typical function $\Curl^2 \phi$. To understand why the contributions on the Neumann
boundary of $\phi$ and $\nabla \phi$  disappear  along $\Gamma_N$, 
some details on the Helmholtz decomposition are recalled from the literature. For this, let 
$\Gamma_0,\dots, \Gamma_J$ denote the compact connectivity components of $\overline{\Gamma_N}$. 

\begin{theorem}[Helmholtz decomposition {\cite[Lemma 3.2]{CD_1998}}]
\label{thm:Helmholtz}
Given any $\sigma-\sigma_h\in L^2(\Omega;\mathbb{S})$, 
there exists $\alpha\in V$,  constant vectors  
$c_0,\dots,c_J\in\R^2$ with $c_0=0$ and  $\beta\in H^2(\Omega)$ 
with $\int_\Omega\beta\, dx = 0$ and  
$\Curl\beta = c_j $  on $\Gamma_j\subseteq\Gamma_N$ for all $j=0,\dots,J$  such that
	\begin{align}\label{e:helmholtz}
		\sigma-\sigma_h = \C\varepsilon(\alpha) + \Curl\Curl \beta. \qed
	\end{align}
\end{theorem}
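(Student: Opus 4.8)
The plan is to build the two parts of the decomposition separately, using standard elliptic theory for the exact-divergence part and the theory of the $\Curl$ operator on a (possibly multiply-connected in the Neumann sense) domain for the divergence-free part. First I would dispose of the divergence: since $\ddiv(\sigma-\sigma_h)\in L^2(\Omega;\R^2)$, I would solve the pure linear elasticity problem with mixed boundary conditions, seeking $\alpha\in V$ with $-\ddiv\C\varepsilon(\alpha)=-\ddiv(\sigma-\sigma_h)$ in $\Omega$ and $(\C\varepsilon(\alpha))\nu=0$ on $\Gamma_N$; this is well posed by Korn's inequality on $V$ (recall $\Gamma_D$ has positive surface measure), and elliptic regularity gives $\C\varepsilon(\alpha)\in H(\ddiv,\Omega;\mathbb S)$. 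Then $\rho:=(\sigma-\sigma_h)-\C\varepsilon(\alpha)\in L^2(\Omega;\mathbb S)$ is divergence-free in the sense $\ddiv\rho=0$ in $\Omega$ and $\rho\nu=0$ along $\Gamma_N$ in the appropriate weak sense.

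The second step is to represent a symmetric, divergence-free $\rho$ as $\Curl\Curl\beta$. The key is that a symmetric matrix field $\rho$ with $\ddiv\rho=0$ on the simply-connected domain $\Omega$ admits an Airy-type potential: row-wise, $\ddiv\rho=0$ yields a vector potential $\psi$ with $\rho=\Curl\psi$, and the symmetry of $\rho$ forces $\psi=\Curl\beta$ for a scalar $\beta\in H^2(\Omega)$ (this is the classical de Rham/Poincaré argument in 2D; one checks $\rot\psi$ has the right structure so that $\psi$ is itself a rotated gradient). The normalization $\int_\Omega\beta\,dx=0$ fixes the additive constant. The boundary condition $\rho\nu=0$ on each component $\Gamma_j$ of $\overline{\Gamma_N}$ translates, via the identity $\rho\nu=(\Curl\Curl\beta)\nu=\partial(\Curl\beta)/\partial s$ along the boundary, into $\Curl\beta$ being \emph{constant} on each connected piece $\Gamma_j$; call that constant $c_j$. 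Since the whole additive freedom of $\beta$ plus an affine function has been used, one may further subtract the linear function whose rotated gradient equals $c_0$, so that $c_0=0$, leaving $c_1,\dots,c_J$ as genuine (generally nonzero) topological data of the Neumann boundary.

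The main obstacle, and the reason this is cited rather than proved, is the rigorous handling of the low regularity together with the boundary conditions: one must make sense of $\rho\nu$ for $\rho\in L^2$ with $\ddiv\rho=0$ (it lives in $H^{-1/2}(\partial\Omega;\R^2)$), justify the existence of the potentials at the $H^1$ resp.\ $H^2$ level rather than merely distributionally, and correctly count the constants $c_j$ — the component $\Gamma_0$ containing, loosely speaking, the "reference" orientation absorbs one affine degree of freedom while the remaining $\Gamma_1,\dots,\Gamma_J$ do not. All of this is exactly the content of \cite[Lemma 3.2]{CD_1998}, so I would simply quote it; the only thing worth re-deriving in the present setting is the identification of the two summands with the displacement/stress split already in play, namely that $\C\varepsilon(\alpha)$ carries the full divergence and $\Curl\Curl\beta$ the symmetric solenoidal remainder, which is immediate from the construction above.
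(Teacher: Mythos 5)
The paper offers no proof of this theorem: it is quoted from \cite[Lemma 3.2]{CD_1998} and the statement already carries the end-of-proof mark. Your sketch follows the standard construction behind that reference --- absorb the divergence by an auxiliary elasticity problem, write the symmetric solenoidal remainder as an Airy function $\Curl\Curl\beta$ on the simply connected $\Omega$, and read the constants $c_j$ off the Neumann boundary condition --- and steps two and three are essentially right (modulo one slip: the condition that makes the vector potential $\psi$ a rotated gradient is $\ddiv\psi=0$, which is precisely what the symmetry $\rho_{12}=\rho_{21}$ delivers; it is not a condition on $\rot\psi$).

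There is, however, a genuine error in your first step. You pose the auxiliary problem with the \emph{homogeneous} natural boundary condition $(\C\varepsilon(\alpha))\nu=0$ on $\Gamma_N$. With that choice the remainder $\rho=(\sigma-\sigma_h)-\C\varepsilon(\alpha)$ is divergence-free, but its normal trace on $\Gamma_N$ equals $(\sigma-\sigma_h)\nu=g-g_h$, which is nonzero in general; then $(\Curl\Curl\beta)\nu=\pm\,\partial(\Curl\beta)/\partial s$ does not vanish along $\Gamma_N$, $\Curl\beta$ is \emph{not} constant on the components $\Gamma_j$, and the assertion of the theorem is lost. (Moreover, the theorem is stated for an arbitrary $L^2(\Omega;\mathbb S)$ field, for which $(\sigma-\sigma_h)\nu$ is not even defined, so a strong formulation cannot be the starting point.) The correct first step is purely variational: seek $\alpha\in V$ with
\begin{equation*}
\int_\Omega \C\varepsilon(\alpha):\varepsilon(v)\,dx=\int_\Omega(\sigma-\sigma_h):\varepsilon(v)\,dx
\quad\text{for all } v\in V,
\end{equation*}
whose natural boundary condition is $(\C\varepsilon(\alpha))\nu=(\sigma-\sigma_h)\nu$ on $\Gamma_N$. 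The resulting orthogonality $\int_\Omega\rho:\varepsilon(v)\,dx=0$ for all $v\in V$ encodes simultaneously $\ddiv\rho=0$ and the vanishing of the normal trace of $\rho$ on $\Gamma_N$ in $H^{-1/2}$, and it is exactly the $\C^{-1}$-orthogonality of the two summands that the paper exploits in \eqref{e:errorsplit}. With this correction the remainder of your argument, including the normalization $c_0=0$ by subtracting an affine function and fixing the constant via $\int_\Omega\beta\,dx=0$, goes through.
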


The second ingredient is an approximation $\beta_h$ of $\beta$ 
from the Helmholtz decomposition in Theorem~\ref{thm:Helmholtz}
based on the Argyris finite element functions $ A(\T) \subset  C^1(\Omega)\cap P_5(\T)$ 
 \cite{BrennerScott2008,Braess2007,Cia78}. 
The local mesh-size $h_\T\in P_0(\T)$ in  the triangulation $\T$ is defined as its diameter $h_\T|_T:=h_T$
on each triangle $T\in\T$.

\begin{lemma}[quasiinterpolation]\label{lemma:quasi}
Given any $\beta$ as in Theorem~\ref{thm:Helmholtz}  there exists some $\beta_h\in A(\T)$
such that $\phi:=\beta-\beta_h \in H^2(\Omega) $  
vanishes at any vertex $z\in \N$ of the triangulation,  $\phi$ and its gradient $\nabla\phi$ 
vanish on $\Gamma_N$, and  the local approximation and stability property holds in the sense that 
\[
\| h_\T^{-2} \phi \| + \| h_\T^{-1}\Curl \phi \| +\| \Curl^2\phi \| \lesssim  \| \beta \|_{H^2(\Omega)}.
\]
\end{lemma}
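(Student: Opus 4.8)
The plan is to build $\beta_h$ as a suitable Argyris quasi-interpolant of $\beta$ and to verify the three required properties: vanishing at vertices, vanishing of $\phi$ and $\nabla\phi$ on $\Gamma_N$, and the scaled approximation/stability bound. The Argyris element has degrees of freedom at each vertex $z$ given by the value, the gradient, and the Hessian of the function, together with normal derivatives at edge midpoints. The natural choice is a Scott--Zhang-type or averaging quasi-interpolation operator $I_h\colon H^2(\Omega)\to A(\T)$ adapted to the Argyris element (as constructed, e.g., via local $L^2$ projections onto polynomials on vertex patches and then matching the nodal functionals), and to set $\beta_h:=I_h\beta$. The first property, $\phi(z)=(\beta-\beta_h)(z)=0$ at all vertices, is automatic because $I_h$ reproduces vertex values; more precisely the vertex-value functional of $I_h\beta$ equals $\beta(z)$ since $\beta\in H^2(\Omega)\hookrightarrow C(\overline\Omega)$ in two dimensions.

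For the Neumann-boundary condition I would exploit the structure of $\beta$ coming from Theorem~\ref{thm:Helmholtz}: on each connectivity component $\Gamma_j\subseteq\overline{\Gamma_N}$ the rotated gradient $\Curl\beta$ equals a constant vector $c_j$, so $\beta$ restricted to $\Gamma_j$ is an affine function of the arc-length (indeed $\beta|_{\Gamma_j}$ has constant tangential derivative and its normal derivative along $\Gamma_j$ is the constant $\tau\cdot c_j$ turned appropriately). The quasi-interpolation operator should be designed so that it is \emph{boundary-respecting} on $\Gamma_N$: on Neumann vertices and Neumann edges the local polynomial used to define the nodal functionals is taken to be the trace of $\beta$ itself along $\Gamma_N$, which is a polynomial (affine) there. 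Consequently all Argyris functionals that are determined by boundary data along $\Gamma_N$ (vertex value, tangential component of the gradient, tangential-tangential second derivative, and the midpoint normal derivative where it is fixed by the affine boundary behaviour) reproduce $\beta$ exactly, which forces $\phi|_{\Gamma_N}=0$ and $\partial_s\phi|_{\Gamma_N}=0$ and also $\partial_\nu\phi=0$ along $\Gamma_N$ once the normal-derivative functionals on Neumann edges are matched. This is the standard trick: one uses the fact that the Argyris element is $C^1$ and its boundary trace on an edge is determined by the vertex value/gradient/Hessian and one midpoint normal derivative, so prescribing these from the exact (affine) boundary data of $\beta$ kills $\phi$ and $\nabla\phi$ on $\Gamma_N$.

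With $\beta_h$ so constructed, the scaled bound $\|h_\T^{-2}\phi\|+\|h_\T^{-1}\Curl\phi\|+\|\Curl^2\phi\|\lesssim\|\beta\|_{H^2(\Omega)}$ follows from the usual Bramble--Hilbert / local-scaling machinery for quasi-interpolation operators: on each element $T$ and its patch $\omega_T$, the operator reproduces polynomials of degree $\le 1$ (enough to get the right powers of $h_T$), so a scaling argument on the reference triangle gives $\|h_T^{-2}\phi\|_{L^2(T)}+\|h_T^{-1}\nabla\phi\|_{L^2(T)}+\|D^2\phi\|_{L^2(T)}\lesssim\|D^2\beta\|_{L^2(\omega_T)}$, and summing over $T$ with finite overlap of patches yields the global estimate with $\|D^2\beta\|_{L^2(\Omega)}\le\|\beta\|_{H^2(\Omega)}$ (noting $\|\Curl\phi\|$ and $\|\nabla\phi\|$ agree up to rotation, and $\|\Curl^2\phi\|\lesssim\|D^2\phi\|$).

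The main obstacle is not any single estimate but the careful \emph{compatibility bookkeeping} of the Argyris degrees of freedom near the interface $\overline{\Gamma_D}\cap\overline{\Gamma_N}$: the operator must be defined so that it simultaneously reproduces vertex values globally (for the first property), reproduces the affine boundary data of $\beta$ on all of $\Gamma_N$ including its endpoints, and still makes sense as a bounded map on $H^2(\Omega)$ on the Dirichlet/interior part. One has to check that the nodal functionals can be prescribed consistently at the transition vertices (where the vertex lies on $\overline{\Gamma_N}$ but some adjacent edges are interior or Dirichlet), using $\phi(z)=0$ at such vertices as in Theorem~\ref{thm:Helmholtz}, and that the resulting $\beta_h$ is genuinely in $A(\T)\subset C^1(\Omega)$ so that $\phi=\beta-\beta_h\in H^2(\Omega)$. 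Once this local construction is pinned down, the three claimed properties and the estimate are routine; I would expect the proof to cite the standard Argyris quasi-interpolation construction and simply indicate the boundary modification on $\Gamma_N$ and the vertex-matching.
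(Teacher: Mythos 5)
Your proposal is correct and follows essentially the route the paper takes: the paper's proof is just a citation of the boundary-condition-preserving Hermite/Argyris quasi-interpolation of Girault and Scott \cite{GS_2002} (as used in \cite{CG_2016}), and your sketch --- averaged nodal functionals for the gradient/Hessian degrees of freedom (since $\beta\in H^2(\Omega)$ only guarantees continuity of $\beta$ itself), exact matching of the vertex values and of the affine trace data of $\beta$ on $\Gamma_N$ coming from $\Curl\beta=c_j$, and a Bramble--Hilbert scaling argument with $P_1$ reproduction --- is precisely the content of that construction. The compatibility bookkeeping at transition vertices that you flag is indeed the only delicate point, and it is handled in the cited reference.
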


\begin{proof}
This has been (partly) utilized in \cite{CG_2016}  and also follows from \cite{GS_2002}. 
\end{proof}

The combination of all aforementioned arguments leads to the following estimate 
as an answer to the question of Subsection~\ref{subsec:1.1} in terms of directional derivatives
of $\ee:= \C^{-1}\sigma_h$. Recall the definition of  $\eta(\T,\sigma_h)$ from \eqref{eq:estimator}.

\begin{theorem}[key result]\label{thm:keyresult}
Let $\sigma\in H(\ddiv,\Omega;\mathbb{S})$ solve \eqref{eq:lame} and let $\sigma_h \in AW_k(\T)$
solve \eqref{eq:discpp}. 
Given $\beta$ from Theorem~\ref{thm:Helmholtz} and its quasiinterpolation 
$\beta_h$ from Lemma~\ref{lemma:quasi}, 
the difference $\phi:=\beta-\beta_h$ satisfies
\[
(\C^{-1}(\sigma-\sigma_h), \Curl^2\phi)_{L^2(\Omega)} \lesssim  | \beta |_{H^2(\Omega)} \eta(\T,\sigma_h).
\]
\end{theorem}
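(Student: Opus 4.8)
The plan is to combine the representation formula of Lemma~\ref{lemma:representation}
applied to $\ee:=\C^{-1}\sigma_h$ with the boundary identity of Lemma~\ref{lemma:bd}
applied to $v=u$ (so that $\varepsilon(v)=\C^{-1}\sigma$ on $\Omega$), and then bound
every resulting term by a Cauchy--Schwarz inequality in the weighted norms furnished
by the quasiinterpolation estimate of Lemma~\ref{lemma:quasi}. First I would write
$\C^{-1}(\sigma-\sigma_h)=\varepsilon(u)-\ee$ and split the left-hand side as
$(\varepsilon(u),\Curl^2\phi)_{L^2(\Omega)}-(\ee,\Curl^2\phi)_{L^2(\Omega)}$. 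The
function $\phi=\beta-\beta_h$ from Lemma~\ref{lemma:quasi} satisfies exactly the
hypotheses of both lemmas: it lies in $H^2(\Omega)$, vanishes at every vertex of
$\T$, and $\phi$ together with $\nabla\phi$ vanishes on $\Gamma_N$; in particular
$\partial\phi/\partial\nu=0$ along $\Gamma_N$ and $\phi(z)=0$ at the interior
vertices of $\Gamma_D$, so Lemma~\ref{lemma:bd} applies with $u_D$ the Dirichlet trace
of $u$.

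Subtracting the two formulas, the interior volume term $(\rot_{NC}^2\ee,\phi)_{L^2(\Omega)}$
and the interior-edge contributions over $E\in\E(\Omega)$ survive unchanged, the
boundary-edge terms over $E\in\E(\Gamma_N)$ cancel because $\phi=\partial_{\nu_E}\phi=0$
there, and on $E\in\E(\Gamma_D)$ the boundary terms of Lemma~\ref{lemma:representation}
combine with those of Lemma~\ref{lemma:bd} into the Dirichlet residuals
$\tau_E\cdot(\ee\tau_E-\partial u_D/\partial s)$ and
$\tau_E\cdot\rot\ee-\nu_E\cdot(\partial(\ee\tau_E)/\partial s+\partial^2 u_D/\partial s^2)$
tested against $\partial_{\nu_E}\phi$ and $\phi$ respectively. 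One then estimates each
group: the volume term by
$\sum_T h_T^2\|\rot\rot\ee\|_{L^2(T)}\,\|h_\T^{-2}\phi\|_{L^2(T)}$; each interior-edge
pair by a trace/scaling argument that converts $\|h_E^{-1/2}\phi\|_{L^2(E)}$ and
$\|h_E^{-3/2}\partial_{\nu_E}\phi\|_{L^2(E)}$ (respectively $\|h_E^{-1/2}\partial_{\nu_E}\phi\|_{L^2(E)}$,
$\|h_E^{-3/2}\phi\|_{L^2(E)}$) into the bulk quantities
$\|h_\T^{-2}\phi\|+\|h_\T^{-1}\Curl\phi\|+\|\Curl^2\phi\|$ on the adjacent triangles; the
Dirichlet-edge terms analogously, absorbing the extra $\partial u_D/\partial s$,
$\partial^2 u_D/\partial s^2$ pieces into $\osc(u_D,\E(\Gamma_D))$ via the $L^2$
projection $\Pi_{m,E}$ (the polynomial parts of these data pair to zero against the
edge residual by the choice $m\ge k+2$ and orthogonality, leaving only the oscillation).
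Summing over all edges and triangles and using finite overlap of edge patches gives the
factor $\eta(\T,\sigma_h)$, while the accumulated $\phi$-norms are controlled by
$|\beta|_{H^2(\Omega)}$ through Lemma~\ref{lemma:quasi}.

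The main obstacle is the bookkeeping on $\Gamma_D$: one must verify that the two
boundary contributions from Lemma~\ref{lemma:representation} and Lemma~\ref{lemma:bd}
assemble into precisely the residuals appearing in \eqref{eq:estimator}, with the
correct $u_D$-derivatives and signs, and that on each Dirichlet edge only the
$(1-\Pi_{m,E})$-part of $\partial(u_D\cdot\tau_E)/\partial s$ and
$\partial^2(u_D\cdot\nu_E)/\partial s^2$ actually enters (so that the $\osc(u_D,\cdot)$
terms, rather than full Dirichlet-data norms, suffice); this uses that $\phi$ and
$\partial_{\nu_E}\phi$ restricted to $E$ are, after the trace estimate, paired against a
single low-degree polynomial factor that can be orthogonalized. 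A secondary technical
point is the trace inequality converting edge $L^2$ norms of $\phi$ and its first
derivatives into the three bulk norms of Lemma~\ref{lemma:quasi} with $h_E$-powers
matching the $h_E$-weights in $\eta$; this is the standard scaled trace/inverse estimate
on a shape-regular triangle, and it is where the dependence on the minimal angle — but
not on $\lambda$ — enters. Everything else is Cauchy--Schwarz and summation.
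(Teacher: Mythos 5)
Your proposal follows essentially the same route as the paper's proof: combine Lemma~\ref{lemma:representation} for $\ee=\C^{-1}\sigma_h$ with Lemma~\ref{lemma:bd} for $\varepsilon(u)=\C^{-1}\sigma$, observe that the $\E(\Gamma_N)$ contributions vanish since $\phi=\nabla\phi=0$ on $\Gamma_N$, and conclude with Cauchy--Schwarz, scaled trace inequalities, and the approximation estimates of Lemma~\ref{lemma:quasi}. One small correction: the Dirichlet-edge terms of $\eta(\T,\sigma_h)$ in \eqref{eq:estimator} already contain the full data derivatives $\partial u_D/\partial s$ and $\partial^2 u_D/\partial s^2$, so the orthogonalization against $\Pi_{m,E}$ and the appeal to $\osc(u_D,\E(\Gamma_D))$ in your last paragraph are unnecessary here --- that splitting belongs to the efficiency analysis of Section~\ref{sec:efficiency}, not to this reliability step.
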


\begin{proof}
Lemma~\ref{lemma:representation} and Lemma~\ref{lemma:bd} lead to a formula for 
$(\ee, \Curl^2\phi)_{L^2(\Omega)}$, $\ee:=  \C^{-1}\sigma_h$, in which all 
the contributions for $E\in\E(\Gamma_N)$
 with $\phi$ and $\nabla\phi$ vanish along $\Gamma_N$. The remaining formula reads 
 \begin{equation*}
 \begin{aligned}
 &(\C^{-1}(\sigma-\sigma_h),\Curl^2\phi)_{L^2(\Omega)}
 = - (\rot_{NC}^2\ee,\phi)_{L^2(\Omega)}
   \\ & \displaystyle  -\sum_{E\in\E(\Omega)} \left(    
   (\tau_E\cdot[\ee]_E\tau_E,\frac{\partial \phi  }{\partial \nu_E})_{L^2(E)}
  -   ([\rot_{NC}\ee]_E - \frac{ \partial [\ee]_E\nu_E}{\partial s} ,\phi \, \tau_E )_{L^2(E)} \right) 
  \\& +\sum_{E\in\E(\Gamma_D)} \left( 
   ( \frac{\partial u_D}{\partial s} - \ee\tau_E, \tau_E\,  \frac{\partial\phi}{\nu_E} )_{L^2(E)} \right.
  \\&\qquad \quad\left. + (     \tau_E\cdot (  \rot_{NC}\ee -\frac{\partial( \ee\nu_E)}{\partial s} )
     +\frac {\partial^2  u_D\cdot\nu_E}{\partial s^2 } ,\phi )_{L^2(E)}\right).
 \end{aligned}
\end{equation*}
The  proof  concludes with 
Cauchy-Schwarz inequalities, trace inequalities,  and the  approximation estimates 
of Lemma~\ref{lemma:quasi}. The remaining details are nowadays standard arguments in the a~posteriori 
error analysis of nonconforming and mixed finite element methods and hence are omitted.
\end{proof}

\bigskip

Before the proof of Theorem~\ref{thmREL} concludes this section, three remarks and one lemma are in order.

\begin{remark}[nonconstant coefficients]\label{rematkcc1}
The main parts of the reliability analysis of this section hold for rather general material tensors 
$\C$ as long as $\ee:= \C^{-1}\sigma_h$ allows for the existence of the traces and the derivatives in the 
error estimator  \eqref{eq:estimator}  in the respective $L^2$ spaces. For instance, if $\lambda$ and $\mu$ 
are piecewise smooth with respect to the underlying triangulation $\T$. 
\end{remark}

\begin{remark}[constant coefficients]\label{remarkonjumptermvanishes}
The overall assumption of constant Lam\'e parameters $\lambda$ and $\mu$ allows a simplification in  the 
error estimator  \eqref{eq:estimator}. It suffices to have $\mu$ globally continuous and $\mu$ and $\lambda$ 
piecewise smooth to guarantee 
\[
\frac{ \partial [\ee]_E\nu_E}{\partial s} \cdot \tau_E =0\quad\text{along }E\in\E(\Omega).
\]
(The proof utilizes the structure of $\C$ and $\C^{-1}$
with $\C^{-1} E = \frac{1}{2\mu} (E - \frac{\lambda}{2(\lambda+\mu)} \tr(E) 1_{2\times 2})$
for any $E\in\mathbb{S}$
 as a linear combination of the identity and some 
scalar multiple of the $2\times 2$ unit matrix. The terms with the identity lead to $1/(2\mu)$ times the jump
$[\sigma_h]_E\nu_E=0$ of the $H(\ddiv)$ conforming stress approximations. The jump terms with the 
unit matrix  (even with jumps of $\lambda$)  are multiplied with the orthogonal unit vectors
$\nu_E$ and $\tau_E$ and so vanish as well.) 
\end{remark}

\begin{remark}[Related work] 
Although the work  \cite{HHX2011} concerns a different problem (bending of a plate
of fourth order) with a different discretisation (even nonconforming in $H(\ddiv)$),
some technical parts of that paper are related to those of this by a rotation of the  underlying coordinate system 
and the substitution  of $\ddiv\ddiv$ 
by  $\rot\rot$ etc.  Another Helmholtz decomposition also allows for a discrete version and thereby
enables a proof of optimal 
convergence of an adaptive algorithm with arguments from \cite{CFPP,CRabus}. 
\end{remark}

A technical detail related to the  robustness in $\lambda\to\infty$ is a well known lemma that controls
the trace of a matrix $E\in\R^{2\times 2}$ by its deviatoric part $\operatorname{dev} E:= E-\tr(E)/2\, 1_{2\times 2}$ 
and its divergence measured in the dual $V^*\subset H^{-1}(\Omega;\R^2)$ of $V$, namely
\[
\|  \ddiv \tau \|_{-1} := \sup_{\substack{v\in V \\   | v |_{H^1(\Omega)} =1}}{\int_\Omega \tau: Dv\, dx}
\quad\text{for all  } 
\tau\in L^2(\Omega;\R^{2\times 2}).
\]

\begin{lemma}[tr-dev-div]\label{l:trdevdivCCDolzmann}
Let $\Sigma_0$ be a closed subspace of
$H(\ddiv,\Omega;\mathbb R^{2\times 2})$, which does not contain the
constant tensor $1_{2\times 2}$. Then any $\tau\in \Sigma_0$ satisfies
\begin{align*}
\|\operatorname{tr}(\tau)\|_{L^2(\Omega)}
 \lesssim \|\operatorname{dev}\tau\|_{L^2(\Omega)}   + \|\ddiv\tau\|_{-1}.
\end{align*}
\end{lemma}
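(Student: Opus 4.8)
\textbf{Proof plan for Lemma~\ref{l:trdevdivCCDolzmann}.}

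The plan is to reduce the statement to a classical inf--sup / Ne\v{c}as-type estimate. First I would split an arbitrary $\tau \in \Sigma_0$ into its deviatoric and spherical parts, $\tau = \operatorname{dev}\tau + \tfrac12 \operatorname{tr}(\tau)\, 1_{2\times 2}$, so that controlling $\|\operatorname{tr}(\tau)\|_{L^2(\Omega)}$ amounts to controlling the scalar function $p := \operatorname{tr}(\tau) \in L^2(\Omega)$. The key structural input is the classical result that the divergence operator $\ddiv\colon H^1_0(\Omega;\R^2) \to L^2(\Omega)/\R$ is surjective with a bounded right inverse (equivalently, the Bogovski\u{\i} operator, or the inf--sup condition for the Stokes problem on the Lipschitz domain $\Omega$): there is a constant $C_\Omega>0$ such that for every $q \in L^2(\Omega)$ with $\int_\Omega q\,dx = 0$ there exists $v \in H^1_0(\Omega;\R^2) \subseteq V$ with $\ddiv v = q$ and $|v|_{H^1(\Omega)} \le C_\Omega \|q\|_{L^2(\Omega)}$.

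Next I would treat the mean value of $p$ separately, since the Bogovski\u{\i} argument only reaches the mean-zero subspace. Write $p = p_0 + \bar p$ with $\bar p := \fint_\Omega p\,dx$ and $\int_\Omega p_0\,dx = 0$. For the oscillatory part $p_0$, pick $v\in H^1_0(\Omega;\R^2)$ with $\ddiv v = p_0$ and $|v|_{H^1(\Omega)}\le C_\Omega\|p_0\|_{L^2(\Omega)}$; then test the $L^2$ pairing: using $1_{2\times 2} : Dv = \ddiv v = p_0$ and $\operatorname{dev}\tau : 1_{2\times 2} = 0$ we get
\[
\|p_0\|_{L^2(\Omega)}^2 = \int_\Omega p_0\, \ddiv v \, dx = \int_\Omega (\tfrac12 p\, 1_{2\times 2}) : Dv\, dx = \int_\Omega \tau : Dv\, dx - \int_\Omega \operatorname{dev}\tau : Dv\, dx,
\]
and the right-hand side is bounded by $(\|\ddiv\tau\|_{-1} + \|\operatorname{dev}\tau\|_{L^2(\Omega)})\,|v|_{H^1(\Omega)} \le C_\Omega(\|\ddiv\tau\|_{-1} + \|\operatorname{dev}\tau\|_{L^2(\Omega)})\|p_0\|_{L^2(\Omega)}$, giving $\|p_0\|_{L^2(\Omega)} \lesssim \|\operatorname{dev}\tau\|_{L^2(\Omega)} + \|\ddiv\tau\|_{-1}$.

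Finally, for the constant part $\bar p\, 1_{2\times 2}$ I would use the standing hypothesis that $\Sigma_0$ does \emph{not} contain $1_{2\times 2}$: on the finite-dimensional space $\R\, 1_{2\times 2}$ the map $c\, 1_{2\times 2}\mapsto \operatorname{dist}_{L^2}(c\,1_{2\times 2},\Sigma_0)$ (after also subtracting the already-controlled deviatoric and mean-zero spherical parts) is a norm, so by equivalence of norms on $\R$ one bounds $|\bar p|$ by $\|\operatorname{dev}\tau\|_{L^2(\Omega)} + \|\ddiv\tau\|_{-1}$ as well; more concretely one argues by contradiction via a compactness/normal-families argument on a sequence $\tau_n\in\Sigma_0$ with $\|\operatorname{tr}\tau_n\|=1$ but $\|\operatorname{dev}\tau_n\|+\|\ddiv\tau_n\|_{-1}\to 0$, extracting a weak limit that must be a nonzero multiple of $1_{2\times 2}$ lying in the closed subspace $\Sigma_0$, contradicting the hypothesis. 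Combining the two parts and using $\|p\|_{L^2(\Omega)} \le \|p_0\|_{L^2(\Omega)} + |\bar p|\,|\Omega|^{1/2}$ yields the claim. The main obstacle is making the constant-part estimate clean and self-contained: the cheapest route is probably the contradiction argument just sketched, which uses only that $\Sigma_0$ is closed and avoids any quantitative handling of the geometry; this is essentially the argument of Carstensen--Dolzmann that the lemma attributes its name to.
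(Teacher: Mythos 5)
Your first step is sound and is in fact the classical core of this lemma: split $\tau=\operatorname{dev}\tau+\tfrac12\tr(\tau)\,1_{2\times 2}$, solve $\ddiv v=$ (mean-zero part of $\tr\tau$) with a bounded right inverse of the divergence, and run the duality computation $\int_\Omega \tr(\tau)\,\ddiv v\,dx=2\int_\Omega\tau:Dv\,dx-2\int_\Omega\operatorname{dev}\tau:Dv\,dx$. Note that the paper offers no self-contained proof at all --- it cites Brezzi--Fortin and \cite[Theorem~4.1]{CD_1998} and asserts that replacing $\|\ddiv\tau\|_{L^2}$ by $\|\ddiv\tau\|_{-1}$ is immediate --- so on this part you are more explicit than the authors.

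The constant part is where your argument has a genuine gap. The contradiction argument takes $\tau_n\in\Sigma_0$ with $\|\tr\tau_n\|_{L^2(\Omega)}=1$ and $\|\operatorname{dev}\tau_n\|_{L^2(\Omega)}+\|\ddiv\tau_n\|_{-1}\to0$ and wants to extract a limit $c\,1_{2\times2}\in\Sigma_0$ with $c\neq0$. But $\Sigma_0$ is only assumed closed in $H(\ddiv,\Omega;\R^{2\times2})$, whose norm contains $\|\ddiv\tau_n\|_{L^2(\Omega)}$ --- a quantity that none of your controlled terms bounds: $\|\ddiv\tau_n\|_{-1}\to0$ is perfectly compatible with $\|\ddiv\tau_n\|_{L^2(\Omega)}\to\infty$ (oscillatory divergences). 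Hence the sequence need not be bounded in $H(\ddiv,\Omega;\R^{2\times2})$, its $L^2$-limit need not belong to $\Sigma_0$, and no contradiction is reached; the hypotheses ``closed and $1_{2\times2}\notin\Sigma_0$'' are simply too weak to close this step. (The example $\Sigma_0=\R\,(1_{2\times2}+\epsilon\rho)$ with $\rho$ deviatoric and divergence-free also shows that any constant must degenerate as $\epsilon\to0$, so a $\Sigma_0$-independent bound is impossible in this generality.) The repair that matches the proof the paper points to is to use that the supremum defining $\|\ddiv\tau\|_{-1}$ runs over all of $V=\{v\in H^1(\Omega;\R^2):v|_{\Gamma_D}=0\}$, not over $H^1_0(\Omega;\R^2)$. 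Since $\Gamma_N$ has positive length, $\ddiv:V\to L^2(\Omega)$ is surjective onto the \emph{whole} of $L^2(\Omega)$ with a bounded right inverse, so you can solve $\ddiv v=\tr(\tau)$ for the full trace with no mean-zero normalization; your single duality computation then bounds $\|\tr(\tau)\|_{L^2(\Omega)}$ directly, the constant part never needs separate treatment, and the constant depends only on $\Omega$ and $\Gamma_N$ --- which is exactly how the concrete subspaces used in Section~2 (defined through conditions on $\tau\nu$ along $\Gamma_N$ or on $\int_{E_0}\tau\nu\,ds$) are meant to be handled.
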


\begin{proof}
There are several variants of the tr-dev-div lemma known in the literature 
[BF91, Proposition 7 in Section IV.3]. The version in \cite[Theorem~4.1]{CD_1998}  explicitly displays 
a version with  $\|\ddiv\tau\|$ replacing $\|\ddiv\tau\|_{-1}$. Since its proof is immediately adopted 
to prove the asserted version, further details are omitted.
\end{proof}

\bigskip

The remaining part of this section outlines why Theorem~\ref{thmREL} follows from 
Theorem~\ref {thm:keyresult} with the arguments from  \cite{CD_1998,CG_2016}.   
The energy norms of  $v\in V$ and $\tau\in H(\ddiv,\Omega;\mathbb{S}) $ read
\[
\anorm{v}^2:=\int_\Omega \varepsilon(v):\C\varepsilon(v)\, dx \quad\text{and}\quad
\|\tau\|_{\C^{-1}}^2:=\int_\Omega\tau : \C^{-1}\tau\, dx.
\]
The remaining residual 
\begin{align*}
		\Res(v) := \int_\Omega f\cdot v\, dx + \int_{\Gamma_N} g\cdot v\, ds 
		- \int_\Omega \sigma_h:\varepsilon(v)\, dx\quad\text{for all }v\in V 
\end{align*}
with its dual norm
\begin{align*}
\anorm{\Res}_* := \sup_{\substack{v\in V \\ \anorm{v}=1}}{\Res(v)}
\end{align*}
allow the following error decomposition \cite[Theorem 3.1]{CG_2016},
with the weighted $L^2$ 
norms $\|\bullet\|_{\C^{-1}}:=\| \C^{-1/2} \bullet \|_{L^2(\Omega)}$
and  $\|\bullet \|_{\C}:=\| \C^{1/2} \bullet \|_{L^2(\Omega)}$,
\begin{align} \label{e:errorsplit}
\|\sigma-\sigma_h\|_{\C^{-1}}^2 
= \anorm{\Res}_{*}^2 + 
\min_{w\in u_D+V}\|\C^{-1}\sigma_h-\varepsilon(w)\|_\C^2.
\end{align}
In fact, it is shown in the proof of \cite[Theorem 3.1]{CG_2016} that $\alpha\in V$ and $\beta\in H^2(\Omega)$ 
from the Helmholtz decomposition of the error $\sigma-\sigma_h$ in 
Theorem~\ref{thm:Helmholtz} are orthogonal with respect to the $L^2$ scalar product weighted with $\C$ and that
the last term in \eqref{e:errorsplit} equals $\|\Curl^2\beta\|_{\C^{-1}}^2=
(\C^{-1}(\sigma-\sigma_h), \Curl^2\phi)_{L^2(\Omega)}$ and hence is
controlled in the key estimate of Theorem~\ref{thm:keyresult}. 

Lemma~\ref{l:trdevdivCCDolzmann} applies to $\Sigma_0$ as the subspace of all $\tau\in H(\ddiv,\Omega;\mathbb{S})$
with homogeneous Neumann data $\tau\nu=0$ along $\Gamma_N$.
Since   $\tau=\Curl^2\beta$ is divergence free and since $\tau\nu=-\partial \Curl\beta /\partial s$ along 
$\Gamma_N$
(owing to the aforementioned elementary relations and the convention that the first $\Curl$ acts row-wise on $\Curl\beta$), 
where $\Curl\beta$  in Theorem~\ref{thm:Helmholtz} is piecewise constant,
it follows that $\tau\in\Sigma_0$. On the other hand $1_{2\times 2}\notin \Sigma_0$ because $\Gamma_N\ne \emptyset$.
Consequently,  Lemma~\ref{l:trdevdivCCDolzmann} implies 
$\| \Curl^2 \beta \|\lesssim  \| \operatorname{dev} \Curl^2 \beta \|$. 
This and elementary calculations with $\C^{-1}$
lead to
\[
| \beta |_{H^2(\Omega)} = \| \Curl^2 \beta \|\lesssim  \| \operatorname{dev} \Curl^2 \beta \|
 \lesssim  \| \Curl^2 \beta\|_{\C^{-1}}.
\]
The combination with the  estimate resulting from Theorem~\ref{thm:keyresult} proves 
\[
\min_{w\in u_D+V}\|\C^{-1}\sigma_h-\varepsilon(w)\|_\C^2 
= \|  \Curl^2\beta\|_{\C^{-1}}^2
\lesssim \|  \Curl^2\beta\|_{\C^{-1}}\,   \eta(\T,\sigma_h).
\]
This implies  $\|  \Curl^2\beta\|_{\C^{-1}}\lesssim  \eta(\T,\sigma_h) $ and leads in 
\eqref{e:errorsplit} to 
\begin{equation}\label{eq:ccreliabilirtyyproof1}
\|\sigma-\sigma_h\|_{\C^{-1}} \lesssim \anorm{\Res}_{*} +  \eta(\T,\sigma_h).
\end{equation}
The remaining term is the estimate of the dual norm $\anorm{\Res}_{*}$ of the residual 
which is done, e.g., in 
{\cite[Lemma 3.3]{CG_2016}}  (under the assumption $g-g_h\perp P_0(\E(\Gamma_N))$)
\[
\anorm{\Res}_{*} \lesssim  \osc(f,\T)+ \osc(g-g_h,\E(\Gamma_N))\le  \eta(\T,\sigma_h)
\]
This and  \eqref{eq:ccreliabilirtyyproof1}
imply
\[
\|\operatorname{dev}(\sigma-\sigma_h)\|\lesssim
\|\sigma-\sigma_h\|_{\C^{-1}} \lesssim  \eta(\T,\sigma_h).
\] 
For any test function $v\in V$ with $ | v |_{H^1(\Omega)} =1$, 
$\int_\Omega (\sigma-\sigma_h) : Dv\, dx= \Res(v)$  and so 
\[
\|\operatorname{div}(\sigma-\sigma_h)\|_{-1} = 
 \sup_{\substack{v\in V \\   | v |_{H^1(\Omega)} =1}}{Res(v)}
 \le \sup_{\substack{v\in V \\   \| \epsilon(v) \|  =1}}{Res(v)} \le 2\mu\, \anorm{\Res}_{*}
  \lesssim  \eta(\T,\sigma_h).
\]
(In the second last step one utilizes that $ 2\mu\, E:E \le E:\C E$ for all $E\in\mathbb{S}$.)  
The combination of Lemma~\ref{l:trdevdivCCDolzmann} for $\tau=\sigma-\sigma_h$ 
with the previous displayed estimates concludes the proof of $\|\sigma-\sigma_h\|\lesssim \eta(\T,\sigma_h)$.
There exist several  appropriate choices of $\Sigma_0\subset H(\ddiv,\Omega;\mathbb{S})$ in this last step. 
Recall that $\Gamma_N$ is the union of connectivity components and so  pick one edge $E_0$
in this polygon and consider
 $\Sigma_0:=\{  \tau\in  H(\ddiv,\Omega;\mathbb{S}):   \int_{E_0} \tau\nu\, ds =0\}$ 
with $1_{2\times 2}\notin \Sigma_0$.
This choice of $E_0$  and so  $\Sigma_0$  depend only on $\Gamma_N$ (independent of $\T$). 
Since $g-g_h=(\sigma-\sigma_h)\nu$ along $E_0$ has 
(piecewise on $\E(E_0)$, whence in total) an integral mean zero, 
Lemma~\ref{l:trdevdivCCDolzmann} indeed applies 
to $\tau=\sigma-\sigma_h\in\Sigma_0$.
\qed

\section{Local efficiency analysis}
\label{sec:efficiency}
The local efficiency follows with the bubble-function technique
for $C^1$ finite elements \cite[Sec 3.7]{Verfuerth1996}.
This section focusses  on a constant $\C$ for linear isotropic elasticity with 
constant Lam\'e parameters $\lambda$ and $\mu$ such that
$\ee:=\C^{-1}\sigma_h\in P_{k+2}(\T)$  for some $\sigma_h\in AW_k(\T)$
is a polynomial of degree at most $k+2$. Appart from this, the Lam\'e parameters 
do not further arise in this section.

The moderate point of departure is the volume term for each triangle $T\in\T$
with barycentric coordinates $\lambda_1,\lambda_2,\lambda_3\in P_1(T)$ 
and their product, the cubic volume bubble function,  
$b_T:=27\,\lambda_1\lambda_2\lambda_3 \in W^{1,\infty}_0(T)$ plus its
square $b_T^2\in  W^{2,\infty}_0(T)$ with $0\le b_T^2\le 1$, 
$ \|b_T\|_{L^2(T)} \lesssim 1$, and $|b_T|_{H^2(T)} \lesssim h_T^{-2}$ etc.

\begin{lemma}[efficiency of volume residual]\label{lemma:eff:1}
Any $v\in H^1(T;\R^2)$, $T\in\T$, satisfies
\begin{align*}
h_T^2 \| \rot\rot \ee \|_{L^2(T)} \lesssim \| \ee - \varepsilon(v) \|_{L^2(T)}.
\end{align*}
\end{lemma}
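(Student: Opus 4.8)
The plan is to use the standard bubble-function argument of Verf\"urth adapted to the second-order operator $\rot\rot$. Write $\psi := \rot\rot\ee \in P_k(T;\R^{2\times 2})$ (componentwise), which is a polynomial on $T$, and test against $b_T^2\,\psi$, which lies in $W^{2,\infty}_0(T)$ with vanishing value and gradient on $\partial T$. On the finite-dimensional space $P_k(T)$ the seminorm $p\mapsto (\int_T b_T^2\, p^2\,dx)^{1/2}$ is equivalent (by a scaling/compactness argument on the reference triangle) to the $L^2(T)$ norm, so $\|\psi\|_{L^2(T)}^2 \lesssim \int_T b_T^2\,\psi:\psi\,dx = (\rot\rot\ee, b_T^2\psi)_{L^2(T)}$.

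Next I would integrate by parts twice on the single triangle $T$. Since $b_T^2\psi$ and its first derivatives vanish on $\partial T$, all boundary contributions disappear, and one obtains $(\rot\rot\ee, b_T^2\psi)_{L^2(T)} = (\ee, \Curl^2(b_T^2\psi))_{L^2(T)}$ (this is exactly the single-triangle case of the identity underlying Lemma~\ref{lemma:representation}, with no edge or boundary terms). Now insert an arbitrary $v\in H^1(T;\R^2)$: because $\Curl^2$ annihilates symmetric gradients, $(\varepsilon(v), \Curl^2(b_T^2\psi))_{L^2(T)} = 0$ after integrating by parts back (the boundary terms again vanish), so $(\ee, \Curl^2(b_T^2\psi))_{L^2(T)} = (\ee - \varepsilon(v), \Curl^2(b_T^2\psi))_{L^2(T)}$. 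A Cauchy--Schwarz inequality then gives $\|\psi\|_{L^2(T)}^2 \lesssim \|\ee - \varepsilon(v)\|_{L^2(T)}\,\|\Curl^2(b_T^2\psi)\|_{L^2(T)}$.

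It remains to control $\|\Curl^2(b_T^2\psi)\|_{L^2(T)}$ by $h_T^{-2}\|\psi\|_{L^2(T)}$. This follows from the product rule together with inverse estimates on the polynomial $b_T^2\psi$ and the scaling properties $|b_T^2|_{W^{j,\infty}(T)}\lesssim h_T^{-j}$ for $j=0,1,2$; a standard scaling to the reference element (again using that everything is polynomial of bounded degree) yields $\|\Curl^2(b_T^2\psi)\|_{L^2(T)}\lesssim h_T^{-2}\|\psi\|_{L^2(T)}$. Combining the two displayed inequalities and dividing by $\|\psi\|_{L^2(T)}$ gives $\|\rot\rot\ee\|_{L^2(T)} = \|\psi\|_{L^2(T)}\lesssim h_T^{-2}\|\ee-\varepsilon(v)\|_{L^2(T)}$, which is the claim after multiplying by $h_T^2$.

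The main obstacle is purely technical: making the two norm-equivalence/inverse-estimate steps precise with constants depending only on shape regularity and on $k$ (through the polynomial degree), i.e. the scaling argument to the reference triangle for both $\|p\|_{L^2(T)}^2\lesssim\int_T b_T^2 p^2$ and $\|\Curl^2(b_T^2\psi)\|_{L^2(T)}\lesssim h_T^{-2}\|\psi\|_{L^2(T)}$. One must also be slightly careful that $\Curl^2$ here means the composition acting first row-wise and that the identity $(\rot\rot\ee,\phi)=(\ee,\Curl^2\phi)$ on a single triangle with $\phi$ having vanishing trace and normal derivative is exactly the boundary-term-free specialization already recorded in the proof of Lemma~\ref{lemma:representation}; no new integration-by-parts bookkeeping is needed.
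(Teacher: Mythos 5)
Your argument is correct and follows essentially the same bubble-function route as the paper's proof, which tests with $b_T\,\rot^2\ee$ where you use $b_T^2\,\rot\rot\ee$; your choice of the squared bubble is if anything the cleaner one, since it makes both the value and the normal derivative of the test function vanish on $\partial T$, so that every boundary term in the two integrations by parts genuinely disappears. One small slip: $\rot\rot\ee$ is a \emph{scalar} polynomial of degree $k$ (the row-wise $\rot$ maps $\ee\in P_{k+2}(T;\mathbb{S})$ to a vector field and the second $\rot$ to a scalar), not an element of $P_k(T;\R^{2\times 2})$, so the colon product should be an ordinary product and $\phi=b_T^2\,\rot\rot\ee$ is then exactly the scalar test function required by Lemma~\ref{lemma:representation}; this does not affect the validity of the argument.
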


\begin{proof}
An inverse estimate for the polynomial $\rot\rot \ee\equiv \rot^2\ee$  implies the estimate 
\[
  \| \rot^2 \ee \|_{L^2(T)}^2   \lesssim \| b_T^{1/2} \rot^2 \ee \|^2_{L^2(T)}
  = (\rot^2\ee , b_T \rot^2 \ee )_{L^2(T)}.
\]
Lemma~\ref{lemma:representation} with $\phi=b_T \rot^2 \ee$
and $( \varepsilon(v)  , \Curl^2\phi )_{L^2(T)} = 0$ lead to
\begin{align*}
\begin{split}
\| b_T^{1/2} \rot^2 \ee \|^2_{L^2(T)}
&= (   \ee - \varepsilon(v) , \Curl^2(b_T \rot^2 \ee ) )_{L^2(T)}\\
&\leq \| \ee - \varepsilon(v) \|_{L^2(T)} \|\Curl^2(b_T \rot^2 \ee )\|_{L^2(T)}.
\end{split}
\end{align*}
This and the inverse estimate 
$\|\Curl^2(b_T \rot^2 \ee )\|_{L^2(T)} \lesssim h_T^{-2} \|b_T \rot^2 \ee \|_{L^2(T)} $
imply
\[
  \| \rot^2 \ee \|_{L^2(T)}^2   \lesssim \| \ee - \varepsilon(v) \|_{L^2(T)}
  h_T^{-2} \|\rot^2 \ee \|_{L^2(T)}.
\]
This concludes the proof.
\end{proof}

The localization of the first edge residual  involves the piecewise quadratic edge-bubble function
$b_E$ with support $T_+\cup T_-$ for an interior edge $E=\partial T_+\cap \partial T_-$ shared by the two triangles
$T_+$ and $T_-$ with edge-patch  $\omega_E := \operatorname(T_+\cup T_-)$. 
With an appropriate scaling $b_E|_T=4\lambda_1\lambda_2$ 
for the two barycentric coordinates  $\lambda_1,\lambda_2$ on $T\in\{T_+,T_-\}$ associated with the 
two vertices of $E$. Then  $b_E\in W^{1,\infty}(\omega_E)$ and 
$b_E^2\in W^{1,\infty}(\omega_E)$ satisfy 
$0\le b_E^2\le b_E\le 1$ and  $|b_E|_{H^1(E)} \lesssim h_E^{-1}$ etc. 

\bigskip

The remaining technical detail is an extension of functions on the edge $E$ to $\omega_E$. Throughout this sections 
those functions are polynomials and given $\rho_E\in P_m(E)$, their coefficients (in some fixed basis) already 
define an algebraic object that  is a natural  extension $\rho\in P_m(\hat E)$ along the straight line 
$\hat E:=\operatorname{mid}(E)+\R\, \tau_E$ that
extends $E$ with midpoint  $\operatorname{mid}(E)$ and tangential unit vector $\tau_E$.  This and 
\[
P_E(\rho_E)(x):= \rho( \tau_E\cdot (x- \operatorname{mid}(E))) \quad\text{for all }x\in \R^2
\]
define a linear extension operator $P_E:P_m(E)\to C^\infty(\R^2)$ 
with $P_E(\rho_E)=\rho_E$ on $E$ for any $\rho_E\in P_m(E)$, which is constant in the normal direction,
$\nabla P_E(\rho_E)\cdot\nu_E\equiv 0$. This design is different from that in \cite{Verfuerth1996}.

\begin{lemma}[efficiency of first interior edge residual]\label{lemma:eff:2}
Any $v\in H^1(\omega_E;\R^2)$,  $E\in\E(\Omega)$, satisfies
\[
h_E^{1/2} \|\tau_E\cdot [\ee]_E\tau_E\|_{L^2(E)} \lesssim \| \ee - \varepsilon(v) \|_{L^2(\omega_E)}.
\]
\end{lemma}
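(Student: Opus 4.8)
\textbf{Proof plan for Lemma~\ref{lemma:eff:2}.}

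The plan is to mimic the bubble-function argument of Lemma~\ref{lemma:eff:1}, now localized to the edge patch $\omega_E$ and applied to the first edge residual $j_E := \tau_E\cdot[\ee]_E\tau_E \in P_{k+2}(E)$. First I would set up the test function $\phi := b_E\, P_E(j_E) \in H^2_0(\omega_E)$ built from the quadratic edge bubble $b_E$ and the normal-constant polynomial extension $P_E$ introduced just before the lemma; note $\phi$ is supported in $\omega_E$, vanishes (together with its gradient) on $\partial\omega_E$, and in particular vanishes at all vertices, so Lemma~\ref{lemma:representation} applies on the patch with no exterior boundary contributions. The crucial design feature is that $\partial_{\nu_E}\phi = 0$ on $E$ (since $P_E(j_E)$ is constant in the normal direction and $b_E$ vanishes on the two ``other'' edges but $\nabla b_E\cdot\nu_E$ need not vanish on $E$ — so more care is needed here; the cleaner route is to observe that on $E$ one has $\partial_{\nu_E}\phi = (\partial_{\nu_E} b_E)\,P_E(j_E)$, which is \emph{not} zero, so this term must be handled, not discarded). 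I therefore expect the right choice to be $\phi := b_E^2\, P_E(j_E)$ so that $\nabla\phi = 2b_E(\nabla b_E)P_E(j_E) + b_E^2\nabla P_E(j_E)$ vanishes on $E$ as well, killing the $\partial_{\nu_E}\phi$ boundary term in the representation formula outright.

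With that choice, Lemma~\ref{lemma:representation} on $\omega_E$, together with $(\varepsilon(v),\Curl^2\phi)_{L^2(\omega_E)}=0$, reduces the error pairing to a sum over $E$ and the two other-edge contributions, but the latter vanish because $\phi$ vanishes to second order on $\partial\omega_E\setminus E$; on $E$ itself the surviving term is $(\tau_E\cdot[\ee]_E\tau_E,\,\partial_{\nu_E}\phi)_{L^2(E)}$. Wait — that is the term I just arranged to kill. The point is subtler: I want to \emph{keep} exactly the first-residual term and discard the second (the $[\rot_{NC}\ee]_E$ one). So the correct test function must have $\phi=0$ on $E$ but $\partial_{\nu_E}\phi \neq 0$ on $E$, and $\phi,\nabla\phi$ vanishing on the rest of $\partial\omega_E$. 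Thus I would take $\phi := b_E\,\psi$ where $\psi$ is chosen so that $\phi=0$ on $E$ — but $b_E$ already vanishes on the other two edges of each triangle, not on $E$. The honest construction: let $w_E$ be the piecewise-linear ``hat-like'' function that is $\lambda_1\lambda_2$ scaled — actually $b_E$ itself vanishes on $E$? No. I would instead use the function $B_E := b_E \cdot (\text{product of the two barycentric coordinates not associated with } E)$... The efficient and standard device here is: pick $\phi := \Theta_E\, P_E(j_E)$ where $\Theta_E$ is the piecewise-polynomial bubble that vanishes on \emph{all} edges of $\omega_E$ to first order and on $E$ additionally has nonzero normal derivative — concretely $\Theta_E := b_E b_{T_+} + b_E b_{T_-}$ restricted appropriately, or more simply the product of all three barycentric coordinates times a linear factor. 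The key properties to verify are: (i) $\phi=0$ and $\nabla\phi=0$ on $\partial\omega_E\setminus E$; (ii) $\phi=0$ on $E$; (iii) $\partial_{\nu_E}\phi\big|_E$ is, up to a fixed positive factor, a bubble-weighted version of $j_E$, so that $\int_E (\tau_E\cdot[\ee]_E\tau_E)\,\partial_{\nu_E}\phi\,ds \gtrsim h_E^{?}\|j_E\|_{L^2(E)}^2$ by an inverse/equivalence-of-norms estimate on the finite-dimensional space $P_{k+2}(E)$.

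Given such a $\phi$, the chain is: $\|j_E\|_{L^2(E)}^2 \lesssim \int_E j_E\,\partial_{\nu_E}\phi\,ds = (\ee-\varepsilon(v),\Curl^2\phi)_{L^2(\omega_E)} \le \|\ee-\varepsilon(v)\|_{L^2(\omega_E)}\,\|\Curl^2\phi\|_{L^2(\omega_E)}$, and then the scaling/inverse estimate $\|\Curl^2\phi\|_{L^2(\omega_E)} \lesssim h_E^{-3/2}\|j_E\|_{L^2(E)}$ (two derivatives cost $h_E^{-2}$, the $L^2(\omega_E)$ versus $L^2(E)$ passage on a patch of width $\sim h_E$ costs $h_E^{1/2}$, and the bubble normalization is scale-invariant) closes the estimate to $\|j_E\|_{L^2(E)} \lesssim h_E^{-3/2}\|\ee-\varepsilon(v)\|_{L^2(\omega_E)}\cdot$ — that gives the wrong power. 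Re-examining: the desired conclusion $h_E^{1/2}\|j_E\|_{L^2(E)}\lesssim\|\ee-\varepsilon(v)\|_{L^2(\omega_E)}$ means I need $\|\Curl^2\phi\|_{L^2(\omega_E)}\lesssim h_E^{-1/2}\|j_E\|_{L^2(E)}$; since $\int_E j_E\partial_{\nu_E}\phi\gtrsim h_E^{-1}\|j_E\|_{L^2(E)}^2$ is what the standard scaling of a ``vanishing-on-$E$, unit-normal-derivative'' bubble produces (the normal derivative of a function vanishing on $E$ and supported within distance $h_E$ scales like $h_E^{-1}$ times its size), combining $h_E^{-1}\|j_E\|^2 \lesssim \|\ee-\varepsilon(v)\|_{L^2(\omega_E)}\cdot h_E^{-3/2}\|j_E\|$ yields exactly $h_E^{1/2}\|j_E\|\lesssim\|\ee-\varepsilon(v)\|_{L^2(\omega_E)}$. \textbf{The main obstacle} is pinning down the right $C^1$ patch-bubble $\phi$ — one that is $H^2(\omega_E)$, vanishes with its gradient on $\partial\omega_E\setminus E$, vanishes on $E$ but has a controlled nonzero normal derivative there proportional to $j_E$ — and then proving the two scaling estimates ($\int_E j_E\partial_{\nu_E}\phi \gtrsim h_E^{-1}\|j_E\|^2$ and $\|\Curl^2\phi\|_{L^2(\omega_E)}\lesssim h_E^{-3/2}\|j_E\|$) via a reference-element transformation, exactly as in \cite[Sec.~3.7]{Verfuerth1996}; everything else is Cauchy--Schwarz and the representation formula.
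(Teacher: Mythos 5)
Your overall strategy is the same as the paper's: apply the representation formula of Lemma~\ref{lemma:representation} on the patch $\omega_E$ with a test function $\phi$ that vanishes on $E$ (so the second edge residual term drops out), whose normal derivative on $E$ is a bubble-weighted copy of $j_E:=\tau_E\cdot[\ee]_E\tau_E$, and which vanishes together with its gradient on $\partial\omega_E$; then use $(\varepsilon(v),\Curl^2\phi)_{L^2(\omega_E)}=0$, Cauchy--Schwarz and scaling. But the step you yourself flag as ``the main obstacle'' is exactly the missing idea, and neither of your concrete candidates closes it. The piecewise function $b_Eb_{T_+}+b_Eb_{T_-}$ is not in $H^2(\omega_E)$: on $E$ its one-sided normal derivatives equal $b_E\,\nabla b_{T_\pm}\cdot\nu_E$, which have opposite signs (and in general different magnitudes), so the gradient jumps across $E$, $\Curl^2\phi\notin L^2(\omega_E)$, and Lemma~\ref{lemma:representation} (which requires $\phi\in H^2$) no longer applies without new, uncontrolled jump terms. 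Your alternative ``product of all three barycentric coordinates times a linear factor'' fails either way: if the linear factor vanishes on $E$, the product is $C^1$ across $E$ but then $\partial_{\nu_E}\phi=0$ on $E$ and the pairing with $j_E$ is lost; if it does not vanish on $E$, the gradient again jumps. The paper's resolution is the specific choice $b(x):=b_E^2(x)\,\nu_E\cdot(x-\operatorname{mid}(E))$ and $\phi:=b\,P_E(j_E)$: since $\nabla(b_E^2)=2b_E\nabla b_E$ is the only discontinuous contribution and it is multiplied by a linear factor vanishing on $E$, one gets $\nabla b=b_E^2\nu_E$ on $E$ from both sides, hence $\phi\in H^2_0(\omega_E)$, while $\partial_{\nu_E}\phi=b_E^2\,j_E$ on $E$, which is precisely your requirement (iii). (A sign- and scale-corrected variant of your $b_Eb_{T_\pm}$ idea could also be made $C^1$, but you did not carry this out.)

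A second, smaller gap: in your final chain you assert $\int_E j_E\,\partial_{\nu_E}\phi\,ds=(\ee-\varepsilon(v),\Curl^2\phi)_{L^2(\omega_E)}$, but the representation formula also produces the volume term $-(\rot_{NC}^2\ee,\phi)_{L^2(\omega_E)}$, which does not vanish since $\ee\in P_{k+2}(\T;\mathbb{S})$. It must be kept and absorbed: with the paper's normalization one has $\|\phi\|_{L^2(\omega_E)}\lesssim h_E^{3/2}\|j_E\|_{L^2(E)}$, so this term contributes $h_E^{3/2}\|\rot_{NC}^2\ee\|_{L^2(\omega_E)}\lesssim h_E^{-1/2}\|\ee-\varepsilon(v)\|_{L^2(\omega_E)}$ by Lemma~\ref{lemma:eff:1}, which is exactly how the paper concludes. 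Your scaling bookkeeping for the remaining terms is consistent (your bubble is normalized without the factor $\nu_E\cdot(x-\operatorname{mid}(E))\sim h_E$, so your powers of $h_E$ differ from the paper's by one but recombine to the same final estimate); once the admissible test function above is in place and the volume residual is reinstated, the argument closes as in the paper.
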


\begin{proof}
Since $\tau_E\cdot [\ee]_E\tau_E\in P_{k+2}(E)$ is a polynomial, the above extension  
$P_E(\tau_E\cdot [\ee]_E\tau_E)$ and the function $b\in W^{2,\infty}_0(\omega_E)$ with  
\begin{equation}\label{testfunctioninlemma:eff:2}
b(x):= b_E^2(x)\, \nu_E\cdot (x- \operatorname{mid}(E))\quad\text{for all }x\in\R^2 
\end{equation}
define some  function $\phi:= b\, P_E(\tau_E\cdot [\ee]_E\tau_E)$.
Since $b=0$ and $\nabla b_E\cdot \nu_E= b_E^2$ along $E$, the test function $\phi\in H^2_0(\omega_E)\subset H_0^2(\Omega)$  leads in Lemma~\ref{lemma:representation} to
\[
  (\tau_E\cdot[\ee]_E\tau_E, \partial_{\nu_E} \phi)_{L^2(E)}=
  (\ee, \Curl^2 \phi)_{L^2(\omega_E)} - (\rot_{NC}^2\ee,\phi)_{L^2(\omega_E)}.
\]
Since  $\partial_{\nu_E} \phi=b_E^2  \, \tau_E\cdot [\ee]_E\tau_E$ on $E$ and  $\varepsilon(v)\perp \Curl^2\phi$,  
an inverse estimate shows
\[
 \| \tau_E\cdot[\ee]_E\tau_E\|^2_{L^2(E)}
 \lesssim (\ee-\varepsilon(v), \Curl^2 \phi)_{L^2(\omega_E)} - (\rot_{NC}^2\ee,\phi)_{L^2(\omega_E)}.
\]
At the heart of the bubble-function methodology are inverse and  trace inequalities 
that allow for appropriate scaling properties \cite{Verfuerth1996} under the overall assumption of shape-regularity. 
In the present case, one power of $h_E\approx h_{T_\pm}$ is hidden in the function $b$ and 
\begin{equation}\label{scalinglawsinlemma:eff:2}
 h_E^{1/2}\, 	| \phi |_{H^2(\omega_E)} 
+  h_E^{-3/2}	\| \phi \|_{L^2(\omega_E)}  \lesssim  \| \tau_E\cdot [\ee]_E\tau_E \|_{L^2(E)}.
\end{equation}
The combination with the previous estimate results in 
\[
 \| \tau_E\cdot[\ee]_E\tau_E\|^2_{L^2(E)}
 \hspace{-1mm}  \lesssim   \hspace{-1mm}  \| \tau_E\cdot [\ee]_E\tau_E \|_{L^2(E)}
  \hspace{-1mm} 
 \left(   \hspace{-1mm}  h_E^{-1/2}     \|\ee-\varepsilon(v)\|_{L^2(\omega_E)}     
  \hspace{-1mm} +  \hspace{-1mm}  
  h_E^{3/2}  \| \rot_{NC}^2\ee \|_{L^2(\omega_E)}\hspace{-1mm}\right) \hspace{-1mm} .
\]
This and Lemma \ref{lemma:eff:1} conclude the proof.
\end{proof}

For any boundary edge $E\in\E(\Omega)$,  the edge-bubble function 
$b_E=4\lambda_1\lambda_2\in W^{1,\infty}(\omega_E)$ 
for the two barycentric coordinates  $\lambda_1,\lambda_2$  associated with the 
two vertices of $E$ and  $b_E$
vanishes on the remaining sides $\partial\omega_E\setminus E$ of the aligned 
triangle $\overline{\omega_E}$.  The Dirichlet data $u_D$ allows for some polynomial approximation 
$\Pi_{m,E} u_{D}\in P_{m}(E)$ of a maximal  degree  bounded by $m\ge k+2$; recall the definition of
$\osc_I(u_D,E)$ from  \eqref{eqdefoscIondirichlet}.

\begin{lemma}[efficiency of first boundary edge residual]\label{lemma:eff:4}
Any function $v\in H^1(\omega_E;\R^2)$ with $v|_E=u_D|_E$ along   $E\in\E(\Gamma_D)$ satisfies 
\[
h_E^{1/2} \|\tau_E\cdot (   \ee\tau_E - \partial u_D/\partial s)\|_{L^2(E)}
\lesssim  \| \ee - \varepsilon(v) \|_{L^2(\omega_E)} + \osc_I(u_D,E).
\]
\end{lemma}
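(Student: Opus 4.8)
The plan is to mimic the bubble-function argument of Lemma~\ref{lemma:eff:2}, but now on a boundary patch $\omega_E$ for $E\in\E(\Gamma_D)$, so that only one triangle is involved and the relevant boundary term in the representation formula of Lemma~\ref{lemma:representation} is the $\Gamma_D$-edge term $(\tau_E\cdot\ee\tau_E,\partial_{\nu_E}\phi)_{L^2(E)}$ rather than the jump term. First I would set $\rho_E:=\Pi_{m,E}\bigl(\tau_E\cdot(\ee\tau_E-\partial u_D/\partial s)\bigr)\in P_m(E)$: since $\tau_E\cdot\ee\tau_E$ is already a polynomial of degree $\le k+2\le m$, one has $\tau_E\cdot(\ee\tau_E-\partial u_D/\partial s)-\rho_E=(1-\Pi_{m,E})\partial(u_D\cdot\tau_E)/\partial s$, whose weighted $L^2(E)$ norm is exactly $\osc_I(u_D,E)/h_E^{1/2}$. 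An inverse estimate on $E$ then gives $\|\rho_E\|_{L^2(E)}^2\lesssim(\rho_E,b_E^2\rho_E)_{L^2(E)}$, and it remains to bound $(\rho_E,b_E^2\rho_E)_{L^2(E)}$.

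The next step is to construct the test function. Following the construction in \eqref{testfunctioninlemma:eff:2}, set $b(x):=b_E^2(x)\,\nu_E\cdot(x-\operatorname{mid}(E))$ and $\phi:=b\,P_E(\rho_E)$, where $P_E$ is the normal-constant polynomial extension introduced before Lemma~\ref{lemma:eff:2}. Because $b_E$ vanishes on $\partial\omega_E\setminus E$ and $b=0$, $\nabla b_E\cdot\nu_E=b_E^2$ along $E$, one checks $\phi\in H^2_0(\omega_E)\subset H^2(\Omega)$, $\phi=0$ at all vertices, $\phi$ and $\nabla\phi$ vanish on the remaining boundary (so the hypotheses of Lemma~\ref{lemma:bd} needed in the overall analysis are respected), and $\partial_{\nu_E}\phi=b_E^2\,\rho_E$ on $E$. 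Apply Lemma~\ref{lemma:representation} on the single triangle: all interior-edge contributions drop, leaving
\[
(\rho_E,b_E^2\rho_E)_{L^2(E)}=(\ee,\Curl^2\phi)_{L^2(\omega_E)}-(\rot_{NC}^2\ee,\phi)_{L^2(\omega_E)}.
\]
Insert $\varepsilon(v)$ with $\varepsilon(v)\perp\Curl^2\phi$ (valid since $\phi\in H^2_0$) to replace $\ee$ by $\ee-\varepsilon(v)$ in the first term, and use that on $E$ we have $\tau_E\cdot\ee\tau_E-\rho_E=\tau_E\cdot(\ee\tau_E-\partial u_D/\partial s)-\bigl(\rho_E-\rho_E\bigr)$—more precisely, writing $r_E:=\tau_E\cdot(\ee\tau_E-\partial u_D/\partial s)$ one has $(r_E,b_E^2\rho_E)_{L^2(E)}=(\rho_E,b_E^2\rho_E)_{L^2(E)}+((r_E-\rho_E),b_E^2\rho_E)_{L^2(E)}$, and the second summand is controlled by $\|r_E-\rho_E\|_{L^2(E)}\|\rho_E\|_{L^2(E)}\le h_E^{-1/2}\osc_I(u_D,E)\|\rho_E\|_{L^2(E)}$. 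The scaling estimates $h_E^{1/2}|\phi|_{H^2(\omega_E)}+h_E^{-3/2}\|\phi\|_{L^2(\omega_E)}\lesssim\|\rho_E\|_{L^2(E)}$ are the analogue of \eqref{scalinglawsinlemma:eff:2} and follow by the same shape-regularity argument. Combining Cauchy--Schwarz, these scalings, and Lemma~\ref{lemma:eff:1} to absorb $h_E^{3/2}\|\rot_{NC}^2\ee\|_{L^2(\omega_E)}$ into $\|\ee-\varepsilon(v)\|_{L^2(\omega_E)}$, one divides by $\|\rho_E\|_{L^2(E)}$ to get $h_E^{1/2}\|\rho_E\|_{L^2(E)}\lesssim\|\ee-\varepsilon(v)\|_{L^2(\omega_E)}+\osc_I(u_D,E)$, and the triangle inequality with $h_E^{1/2}\|r_E-\rho_E\|_{L^2(E)}=\osc_I(u_D,E)$ finishes the bound for $h_E^{1/2}\|r_E\|_{L^2(E)}$.

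\textbf{Main obstacle.} The routine parts are the inverse/trace scaling estimates, which are standard once shape-regularity is invoked. The one genuinely delicate point is that, unlike the interior case, $\ee\tau_E$ is not the full boundary datum: the exact strain satisfies $\varepsilon(u)\tau_E\cdot\tau_E=\partial(u_D\cdot\tau_E)/\partial s$ only after accounting for the curvature/vertex behaviour of the polygonal boundary, and $u_D$ is merely edgewise $H^2$. Hence one must be careful that the projection $\Pi_{m,E}$ is applied to the \emph{tangential} component $u_D\cdot\tau_E$ (fixed $\tau_E$ along the straight edge $E$), so that $\osc_I(u_D,E)$ as defined in \eqref{eqdefoscIondirichlet} is exactly the polynomial-approximation defect that appears, and that the test function $\phi$ genuinely lies in $H^2(\Omega)$ across the boundary edge—this is where the normal-constant extension $P_E$ (rather than Verfürth's) is essential, since it guarantees $\partial_{\nu_E}\phi=b_E^2\rho_E$ cleanly on $E$ without spurious tangential-derivative contributions. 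Getting this bookkeeping right, and confirming that the constant depends only on shape-regularity and $m$, is the crux; everything else is a transcription of the proof of Lemma~\ref{lemma:eff:2}.
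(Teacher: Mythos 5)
Your overall architecture (project onto $P_m(E)$, split off $\osc_I(u_D,E)$ by Pythagoras, test with $\phi=b\,P_E\rho_E$ for $b$ from \eqref{testfunctioninlemma:eff:2}, then scale) matches the paper's proof, but there is a genuine error at the central step: you claim $\phi\in H^2_0(\omega_E)$ and hence $(\varepsilon(v),\Curl^2\phi)_{L^2(\omega_E)}=0$. For a Dirichlet edge the patch $\omega_E$ is a single triangle and $E$ is part of $\partial\omega_E$; there $b=0$ but $\partial_{\nu_E}\phi=b_E^2\rho_E\not\equiv0$, so $\phi\notin H^2_0(\omega_E)$ and the orthogonality fails. (It holds in Lemma~\ref{lemma:eff:2} only because there $E$ lies in the interior of $\omega_E=T_+\cup T_-$.) This is not cosmetic: Lemma~\ref{lemma:representation} produces the edge term $(\tau_E\cdot\ee\tau_E,\partial_{\nu_E}\phi)_{L^2(E)}$ --- not $(\rho_E,b_E^2\rho_E)_{L^2(E)}$ as in your display --- and the datum $\partial u_D/\partial s$ can only enter the identity through the \emph{nonvanishing} term $(\varepsilon(v),\Curl^2\phi)_{L^2(\omega_E)}$. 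If that term were zero, you would be left with $(\tau_E\cdot\partial u_D/\partial s,\,b_E^2\rho_E)_{L^2(E)}$ uncancelled, which is of the size of the full Dirichlet datum rather than of its oscillation; your subsequent bookkeeping with $r_E-\rho_E$ cannot absorb it.

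The paper supplies exactly the missing identity: two integrations by parts (as in Lemma~\ref{lemma:bd}, using $v|_E=u_D|_E$ and that $\Curl\phi=(\partial_{\nu_E}\phi)\,\tau_E$ on $E$ because $\phi=0$ there) give
\[
(\varepsilon(v),\Curl^2\phi)_{L^2(\omega_E)}=(\partial u_D/\partial s,\;\tau_E\,(\nu_E\cdot\nabla\phi))_{L^2(E)}.
\]
Subtracting this from the representation formula yields
$(\tau_E\cdot(\ee\tau_E-\partial u_D/\partial s),\partial_{\nu_E}\phi)_{L^2(E)}
=(\ee-\varepsilon(v),\Curl^2\phi)_{L^2(\omega_E)}-(\rot_{NC}^2\ee,\phi)_{L^2(\omega_E)}$, after which your oscillation split, the scaling \eqref{scalinglawsinlemma:eff:2}, and Lemma~\ref{lemma:eff:1} conclude as you describe. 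The repair is local but essential; as written, your key displayed identity is false and the claimed bound does not follow from it.
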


\begin{proof}
Since $\tau_E\cdot \ee\tau_E$ is a polynomial of degree at most $k+2\le m$ along the exterior edge $E$, the residual 
$\tau_E\cdot (   \ee\tau_E - \partial u_D/\partial s)$ is well approximated by its  $L^2$ projection 
$\rho_E:=  (\tau_E\cdot (   \ee\tau_E - \Pi_{m,E} \partial u_D/\partial s))$ onto $P_{m}(E)$.
The Pythagoras theorem based on the $L^2$ orthogonality reads
\[
h_E \|\tau_E\cdot (   \ee\tau_E - \partial u_D/\partial s)\|_{L^2(E)}^2=
h_E \| \rho_E\|_{L^2(E)}^2+\osc^2_I(u_D,E)
\]
and it remains to bound $h_E^{1/2} \| \rho_E\|_{L^2(E)}$ by the right-hand side of the claimed 
inequality.  The extension $P_E\rho_E\in C^\infty(\R^2)$  and 
the function $b$ from \eqref{testfunctioninlemma:eff:2}
lead to an admissible test function 
$
\phi:= b P_E \rho_E
\in W^{1,\infty}_0(\omega_E)$. Two successive integration by parts as in 
Lemma~\ref{lemma:representation} show
\[
(\varepsilon(v),\Curl^2\phi)_{L^2(\omega_E)}=(\partial u_D/\partial s ,\tau_E (\nu_E\cdot \nabla \phi))_{L^2(E)}.
\]
This and  Lemma~\ref{lemma:representation} lead to 
\begin{equation*}
  (\tau_E\cdot( \ee\tau_E- \frac{ \partial u_D}{\partial s}), \frac{\partial \phi}{\partial \nu_E} )_{L^2(E)}=
  (\ee-\varepsilon(v) , \Curl^2 \phi)_{L^2(\omega_E)} - (\rot_{NC}^2\ee,\phi)_{L^2(\omega_E)}.
\end{equation*}
Since   $\partial_{\nu_E} \phi = b_E^2  \rho_E$ along $E$ and $\rho_E$ is the $L^2$ projection 
of $\tau_E\cdot( \ee\tau_E- \partial u_D/\partial s)$, the left-hand side equals 
$\| b_E  \rho_E\|_{L^2(E)}^2 - ( ( 1-\Pi_{m,E})   \partial u_D/\partial s, b_E^2  \rho_E )_{L^2(E)}$.
The scaling argument which leads to  \eqref{scalinglawsinlemma:eff:2} shows that the left-hand side 
of \eqref{scalinglawsinlemma:eff:2} is  $\lesssim\| \rho_E\|_{L^2(E)}$. The combination with 
the previously displayed identity leads to
\[
 \| \rho_E\|_{L^2(E)}^2\lesssim  \| \rho_E\|_{L^2(E)} \hspace{-1mm} 
 \left(   \hspace{-1mm}  h_E^{-1/2}     \|\ee-\varepsilon(v)\|_{L^2(\omega_E)}     
  \hspace{-1mm} +  \hspace{-1mm}  h_E^{3/2}  \| \rot_{NC}^2\ee \|_{L^2(\omega_E)}
    \hspace{-1mm} +  \hspace{-1mm}h_E^{-1/2}\hspace{-1mm} \osc_I(E,u_D)   \hspace{-1mm} \right) \hspace{-1mm}.
\]
This and Lemma \ref{lemma:eff:1} conclude the proof.
\end{proof}

The edge-bubble functions for the  second edge residuals are defined slightly differently
to ensure some  vanishing normal derivative.

\begin{lemma}[efficiency of second interior edge residual]\label{lemma:eff:3}
Any $v\in H^1(\omega_E;\R^2)$,  $E\in\E(\Omega)$, satisfies
\[
  h_E^{3/2} \|\tau_E\cdot( [\rot_{NC} \ee]_E - \partial  [\ee]_E/\partial s\, \nu_E)\|_{L^2(E)} 
  \lesssim \| \ee - \varepsilon(v) \|_{L^2(\omega_E)}.
\]
\end{lemma}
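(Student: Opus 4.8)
The plan is to mimic the proof of Lemma~\ref{lemma:eff:2} for the \emph{first} interior edge residual, but with a carefully engineered test function that picks out the \emph{second} jump term $\tau_E\cdot([\rot_{NC}\ee]_E-\partial[\ee]_E\nu_E/\partial s)$ via the $\phi\,\tau_E$-coefficient in the representation formula of Lemma~\ref{lemma:representation}, rather than via the $\partial_{\nu_E}\phi$-coefficient. First I would abbreviate $\rho_E:=\tau_E\cdot([\rot_{NC}\ee]_E-\partial[\ee]_E\nu_E/\partial s)\in P_{k+1}(E)$, which is a polynomial along $E$, and extend it to $P_E\rho_E\in C^\infty(\R^2)$ with the normal-constant extension operator introduced before Lemma~\ref{lemma:eff:2}. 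I would then form the test function $\phi:=b_E^2\,P_E\rho_E\in W^{2,\infty}_0(\omega_E)$ (no extra normal-weight factor this time, since we want $\phi|_E\ne 0$ and $\partial_{\nu_E}\phi|_E=0$). The key point is that $b_E^2$ and its first-order normal derivative both vanish on $E$, so $\partial_{\nu_E}\phi=0$ on $E$, which kills the first interior edge term in Lemma~\ref{lemma:representation}; on the other hand $\phi|_E=b_E^2\rho_E$, so only the second jump term survives on $E$.

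Next, applying Lemma~\ref{lemma:representation} on $\omega_E$ with this $\phi$, using $\phi$ vanishes at all vertices of $\T$ (guaranteed by the $b_E^2$ factor), and using $\varepsilon(v)\perp\Curl^2\phi$ on $\omega_E$, gives
\[
(\rho_E,\ \phi\,\tau_E\cdot\tau_E)_{L^2(E)}
= -(\ee-\varepsilon(v),\Curl^2\phi)_{L^2(\omega_E)}+(\rot_{NC}^2\ee,\phi)_{L^2(\omega_E)}.
\]
Wait---I need $\phi\tau_E$ paired against a vector; more precisely the surviving term reads $([\rot_{NC}\ee]_E-\partial[\ee]_E\nu_E/\partial s,\ \phi\,\tau_E)_{L^2(E)}=(\rho_E,\phi)_{L^2(E)}$ after taking the $\tau_E$-component, and with $\phi|_E=b_E^2\rho_E$ the left side is $\|b_E\rho_E\|_{L^2(E)}^2$. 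An inverse estimate on the polynomial $\rho_E$ then bounds $\|\rho_E\|_{L^2(E)}^2\lesssim\|b_E\rho_E\|_{L^2(E)}^2$ from below, and Cauchy--Schwarz on the right gives
\[
\|\rho_E\|_{L^2(E)}^2\lesssim \|\Curl^2\phi\|_{L^2(\omega_E)}\|\ee-\varepsilon(v)\|_{L^2(\omega_E)}+\|\rot_{NC}^2\ee\|_{L^2(\omega_E)}\|\phi\|_{L^2(\omega_E)}.
\]
The decisive scaling bookkeeping is that $\phi=b_E^2 P_E\rho_E$ now contains \emph{no} extra power of $h_E$ (unlike \eqref{testfunctioninlemma:eff:2}, where the factor $\nu_E\cdot(x-\operatorname{mid}(E))$ supplied one power), so the standard inverse/trace estimates under shape-regularity yield
\[
h_E^{3/2}|\phi|_{H^2(\omega_E)}+h_E^{-1/2}\|\phi\|_{L^2(\omega_E)}\lesssim\|\rho_E\|_{L^2(E)}.
\]
Substituting this and dividing by $\|\rho_E\|_{L^2(E)}$ gives $\|\rho_E\|_{L^2(E)}\lesssim h_E^{-3/2}\|\ee-\varepsilon(v)\|_{L^2(\omega_E)}+h_E^{1/2}\|\rot_{NC}^2\ee\|_{L^2(\omega_E)}$, and then Lemma~\ref{lemma:eff:1} absorbs the volume term at the cost of $\|\ee-\varepsilon(v)\|_{L^2(\omega_E)}$, finishing with $h_E^{3/2}\|\rho_E\|_{L^2(E)}\lesssim\|\ee-\varepsilon(v)\|_{L^2(\omega_E)}$.

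The main obstacle I anticipate is getting the scaling powers exactly right: the residual here is a \emph{higher}-order quantity (it involves first derivatives of $\ee$ along $E$, not just a trace of $\ee$), so the weight in front changes from $h_E^{1/2}$ to $h_E^{3/2}$, and this has to be reflected consistently in the choice of $\phi$ (which bubble power, whether to include the $\nu_E\cdot(x-\operatorname{mid}(E))$ factor) and in the inverse estimates. A secondary subtlety is verifying that $\phi$ and its full gradient vanish on $\partial\omega_E$ so that $\phi\in H^2_0(\omega_E)\subset H^2_0(\Omega)$ and Lemma~\ref{lemma:representation} applies on $\omega_E$ with all boundary-of-$\omega_E$ contributions absent; this is where the specific choice $b_E^2$ (which is $C^1$-flat across $E$ and vanishes on the outer edges of $\omega_E$) together with the normal-constant extension $P_E$ does the work, exactly as in the previous two lemmas.
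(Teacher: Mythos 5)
Your overall strategy is the right one and matches the paper's: test the representation formula of Lemma~\ref{lemma:representation} with $\phi=b\,P_E\rho_E$ for a bubble $b$ that has \emph{nonzero trace} but \emph{vanishing normal derivative} on $E$, so that only the second jump term survives, then conclude by scaling and Lemma~\ref{lemma:eff:1}. The scaling bookkeeping you give ($h_E^{3/2}|\phi|_{H^2(\omega_E)}+h_E^{-1/2}\|\phi\|_{L^2(\omega_E)}\lesssim\|\rho_E\|_{L^2(E)}$ for a bubble with no extra $h_E$-weight) is also correct. The gap is in the concrete choice $b=b_E^2$ with the standard quadratic edge bubble: your key claim that ``$b_E^2$ and its first-order normal derivative both vanish on $E$'' is false and also contradicts your own later use of $\phi|_E=b_E^2\rho_E\neq0$. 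On $E$ one has $b_E=4\lambda_1\lambda_2>0$ in the interior of $E$, and $\partial_{\nu_E}(b_E^2)=2b_E\,\partial_{\nu_E}b_E$ with $\partial_{\nu_E}b_E=4(\lambda_2\nabla\lambda_1+\lambda_1\nabla\lambda_2)\cdot\nu_E\neq0$ in general (indeed $(\nabla\lambda_1+\nabla\lambda_2)\cdot\nu_E=-\nabla\lambda_3\cdot\nu_E\neq0$). Worse, this normal derivative is two-valued across $E$, so $\phi=b_E^2P_E\rho_E$ is not even in $H^2(\omega_E)$ and Lemma~\ref{lemma:representation} does not apply as stated; and the first interior edge term $(\tau_E\cdot[\ee]_E\tau_E,\partial_{\nu_E}\phi)_{L^2(E)}$ does not drop out. (Contrast with Lemma~\ref{lemma:eff:2}, where the factor $\nu_E\cdot(x-\operatorname{mid}(E))$ is what makes $\nabla b$ single-valued on $E$ and puts $b$ into $W^{2,\infty}_0(\omega_E)$.)

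This is exactly the issue the paper flags with the sentence ``the edge-bubble functions for the second edge residuals are defined slightly differently to ensure some vanishing normal derivative.'' The paper's fix is to take $b:=\chi_{B(x_E,r_E)}*\eta_{r_E}\in C_c^\infty(\omega_E)$, the mollified characteristic function of a ball centred at a point $x_E\in E$: this $b$ is smooth, supported in $\omega_E$, takes values in $[0,1]$, is symmetric about $E$ so that $\nabla b\cdot\nu_E=0$ along $E$, and is bounded below on a subinterval of $E$ of length $\approx h_E$ so that the polynomial norm equivalence $\|\rho_E\|_{L^2(E)}\lesssim\|b^{1/2}\rho_E\|_{L^2(E)}$ still holds. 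With that replacement your argument goes through verbatim. Alternatively, if you insist on $b_E^2$, you would have to (i) rederive the representation formula for a test function whose gradient jumps across $E$ and (ii) absorb the resulting nonzero first-edge-residual contribution via Lemma~\ref{lemma:eff:2}; neither step is addressed in your write-up.
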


\begin{proof}
There are many ways to define an edge-bubble function for this situation and one may first select a maximal 
open ball $B(x_E,2r_E) \subset \omega_E$ around  a point $x_E\in E$ with maximal radius $2r_E$, 
which is entirely included in $\omega_E$. The characteristic function $\chi_{B(x_E,r_E)}$ of the smaller ball
$B(x_E,r_E)$ may be regularized with a standard mollification $\eta_{r_E}$ to define the smooth function
$b:=\chi_{B(x_E,r_E)}*\eta_{r_E}\in C_c^\infty(\Omega_E)$ with values in $[0,1]$ 
and with $\nabla b\cdot \nu_E=0$ along $E$.  The polynomial 
$\rho_E:= \tau_E\cdot( [\rot_{NC}\ee]_E - \partial  [\ee]_E/\partial s \, \nu_E)$ 
and its extension  $P_E \rho_E$ define the  test function 
$\phi := b P_E \rho_E\in C_0^\infty(\omega_E)$ in 
Lemma~\ref{lemma:representation}. The representation formula 
and $(\varepsilon(v),\Curl^2\phi)_{L^2(\omega_E)}=0$ lead to
\[
\|  b^{1/2} \rho_E\|_{L^2(E)}^2
=  (\varepsilon(v)-\ee ,\Curl^2\phi)_{L^2(\omega_E)} + (\rot_{NC}^2\ee,\phi)_{L^2(\omega_E)} .
\]
The  inverse inequality  $\|  \rho_E\|_{L^2(E)}\|  \le \| b^{1/2} \rho_E\|_{L^2(E)}$, 
Cauchy inequalities, and the right  scaling properties of $\phi$ lead to 
\[
\|\rho_E\|_{L^2(E)}^2\lesssim \|\rho_E\|_{L^2(E)}\left(  
h_E^{-3/2} \|\ee-\varepsilon(v) \|_{L^2(\omega_E)} +    h_E^{1/2}\| \rot_{NC}^2\ee\| _{L^2(\omega_E)} \right). 
\]
This and Lemma \ref{lemma:eff:1} conclude the proof.
\end{proof}

The efficiency of the last edge contribution involves  
the  second Dirichlet data oscillation  $\osc_{II}(u_D,E)$ 
from  \eqref{eqdefoscIIondirichlet}.
 
\begin{lemma}[efficiency of second boundary edge residual]\label{lemma:eff:5}
Any function $v\in H^1(\omega_E;\R^2)$ with $v|_E=u_D|_E$ along   $E\in\E(\Gamma_D)$ satisfies 
\[
h_E^{3/2} \|\tau_E\cdot \rot \ee  - \nu_E\cdot (\frac{ \partial \ee \tau_E }{\partial s }
+\frac{ \partial^2 u_D}{ \partial s^2}) \|_{L^2(E)}
\lesssim  \| \ee - \varepsilon(v) \|_{L^2(\omega_E)} + \osc_{II}(u_D,E).
\]
\end{lemma}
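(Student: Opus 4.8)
The plan is to mimic closely the proof of Lemma~\ref{lemma:eff:3}, the efficiency of the second interior edge residual, since the second boundary edge residual is its natural analogue along $\Gamma_D$, with the Dirichlet data $u_D$ playing the role of the jump $[\ee]_E$. Abbreviate the residual by
\[
\rho_E := \tau_E\cdot\rot\ee - \nu_E\cdot\Bigl(\frac{\partial\ee\tau_E}{\partial s} + \frac{\partial^2 u_D}{\partial s^2}\Bigr).
\]
First I would handle the oscillation by Pythagoras: replace $\partial^2 u_D/\partial s^2$ by its projection $\Pi_{m,E}\partial^2(u_D\cdot\nu_E)/\partial s^2$ (more precisely the relevant scalar component), so that the ``polynomial part'' $\tilde\rho_E\in P_m(E)$ of $\rho_E$ satisfies $h_E^3\|\rho_E\|_{L^2(E)}^2 = h_E^3\|\tilde\rho_E\|_{L^2(E)}^2 + \osc_{II}^2(u_D,E)$, using that $\tau_E\cdot\rot\ee$ and $\nu_E\cdot\partial(\ee\tau_E)/\partial s$ are already polynomials of degree at most $k+1\le m$. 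It then suffices to bound $h_E^{3/2}\|\tilde\rho_E\|_{L^2(E)}$.

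Next I would construct the test function exactly as in Lemma~\ref{lemma:eff:3}: pick the mollified ball bubble $b\in C_c^\infty(\omega_E)$ with $\nabla b\cdot\nu_E = 0$ along $E$, form the normal-constant extension $P_E\tilde\rho_E\in C^\infty(\R^2)$, and set $\phi := b\,P_E\tilde\rho_E \in C_0^\infty(\omega_E)\subset H_0^2(\Omega)$. Because $\phi$ and $\nabla\phi$ vanish on $\partial\omega_E$ (in particular $\phi = \partial\phi/\partial\nu = 0$ on $\Gamma_N$ trivially, as $\supp\phi\subset\omega_E$ meets only the edge $E\subset\Gamma_D$), Lemma~\ref{lemma:bd} applies to $v$ with $v|_E = u_D|_E$ and gives
\[
(\varepsilon(v),\Curl^2\phi)_{L^2(\omega_E)} = \Bigl(\frac{\partial u_D}{\partial s},\frac{\partial\phi}{\partial\nu_E}\tau_E\Bigr)_{L^2(E)} + \Bigl(\frac{\partial^2 u_D}{\partial s^2},\phi\,\nu_E\Bigr)_{L^2(E)}.
\]
Meanwhile Lemma~\ref{lemma:representation} applied on $\omega_E$ produces $(\ee,\Curl^2\phi)_{L^2(\omega_E)} - (\rot_{NC}^2\ee,\phi)_{L^2(\omega_E)}$ equal to the boundary term on $E$, which by the vanishing of $\partial\phi/\partial\nu_E$ on $E$ (built into $b$) reduces to $-(\rot\ee - \partial(\ee\nu_E)/\partial s,\phi\,\tau_E)_{L^2(E)}$. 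Subtracting the two identities, the terms containing $\partial\phi/\partial\nu_E$ and the $\tau_E\cdot\ee\tau_E$-type contributions cancel, and after using $\tau_E\cdot(\rot\ee - \partial(\ee\nu_E)/\partial s)$ together with $\partial^2(u_D\cdot\nu_E)/\partial s^2$ to reassemble $\tilde\rho_E$ tested against $\phi$, one arrives at
\[
(\tilde\rho_E,\phi)_{L^2(E)} = (\varepsilon(v)-\ee,\Curl^2\phi)_{L^2(\omega_E)} + (\rot_{NC}^2\ee,\phi)_{L^2(\omega_E)} + \bigl((1-\Pi_{m,E})\tfrac{\partial^2 u_D}{\partial s^2},\phi\,\nu_E\bigr)_{L^2(E)}.
\]
Since $\phi = b\,\tilde\rho_E$ on $E$ with $b$ bounded below on a fixed fraction of $E$, an inverse estimate bounds $\|\tilde\rho_E\|_{L^2(E)}^2$ by the left side; on the right side Cauchy--Schwarz, the scaling bounds $h_E^{-3/2}\|\phi\|_{L^2(\omega_E)} + h_E^{1/2}|\phi|_{H^2(\omega_E)}\lesssim\|\tilde\rho_E\|_{L^2(E)}$ analogous to \eqref{scalinglawsinlemma:eff:2}, the trace-type bound $\|\phi\|_{L^2(E)}\lesssim h_E^{-1/2}\|\phi\|_{L^2(\omega_E)}$, and Lemma~\ref{lemma:eff:1} to absorb $h_E^{1/2}\|\rot_{NC}^2\ee\|_{L^2(\omega_E)}$ into $h_E^{-3/2}\|\ee-\varepsilon(v)\|_{L^2(\omega_E)}$ finish the estimate of $h_E^{3/2}\|\tilde\rho_E\|_{L^2(E)}$; combining with the Pythagoras splitting yields the claim.

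**The main obstacle** I anticipate is purely bookkeeping: verifying that the tangential/normal decomposition of the boundary terms in Lemmas~\ref{lemma:representation} and~\ref{lemma:bd}, once specialized to a test function with $\partial\phi/\partial\nu_E = 0$ on $E$, leaves exactly the combination $\tau_E\cdot\rot\ee - \nu_E\cdot(\partial(\ee\tau_E)/\partial s + \partial^2 u_D/\partial s^2)$ against $\phi$ and nothing else — in particular that the $\partial(\ee\nu_E)/\partial s$ appearing in Lemma~\ref{lemma:representation} and the $\partial(\ee\tau_E)/\partial s$ in the estimator are reconciled via the identity $\partial\ee/\partial s = (\partial\ee\tau_E/\partial s)\otimes\text{-part}$ together with $\tau_E\cdot(\partial(\ee\nu_E)/\partial s) = \nu_E\cdot(\partial(\ee\tau_E)/\partial s)$ valid for the symmetric matrix field $\ee$ along $E$ (since $\partial/\partial s$ is a scalar derivative and $\tau_E,\nu_E$ are constant on $E$). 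Once that algebraic identity is in hand the rest is the standard Verf\"urth bubble-function machinery already exercised in Lemmas~\ref{lemma:eff:1}--\ref{lemma:eff:3}.
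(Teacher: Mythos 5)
Your argument reproduces the paper's proof step for step: the Pythagoras split against the projection $\Pi_{m,E}$, a mollified-ball bubble with vanishing normal derivative on $E$, the combination of Lemma~\ref{lemma:representation} with a local version of Lemma~\ref{lemma:bd}, and the symmetry identity $\tau_E\cdot\partial(\ee\nu_E)/\partial s=\nu_E\cdot\partial(\ee\tau_E)/\partial s$ to reassemble the residual tested against $b\,\tilde\rho_E$. Two details need repair. First, for $E\in\E(\Gamma_D)$ the patch $\omega_E$ is a single triangle with $E\subset\partial\omega_E$, so the bubble cannot be taken ``exactly as in Lemma~\ref{lemma:eff:3}'': a ball contained in $\omega_E$ and centered at a point of $E$ does not exist, and your assertion that $\phi\in C_0^\infty(\omega_E)$ with $\phi$ and $\nabla\phi$ vanishing on all of $\partial\omega_E$ contradicts your later (and essential) use of $\phi=b\,\tilde\rho_E\neq 0$ on $E$. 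The paper instead centers the ball at $x_E\in E$ so that $B(x_E,2r_E)\cap\omega_E$ is a half-ball; then $b$ has positive integral mean along $E$, vanishes on $\partial\omega_E\setminus E$, and has vanishing normal derivative on the whole of $\partial\omega_E$ (which is what makes the extension of $\phi$ by zero admissible and kills the $\partial_{\nu_E}\phi$ terms). Second, the scaling you quote, $h_E^{-3/2}\|\phi\|_{L^2(\omega_E)}+h_E^{1/2}|\phi|_{H^2(\omega_E)}\lesssim\|\tilde\rho_E\|_{L^2(E)}$, is the one from \eqref{scalinglawsinlemma:eff:2}; it holds there only because the bubble $b$ of \eqref{testfunctioninlemma:eff:2} carries the extra factor $\nu_E\cdot(x-\operatorname{mid}(E))\approx h_E$. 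For the mollified-ball bubble (values in $[0,1]$, no extra factor) the correct scalings are one power of $h_E$ weaker in each term, namely $\|\phi\|_{L^2(\omega_E)}\lesssim h_E^{1/2}\|\tilde\rho_E\|_{L^2(E)}$ and $|\phi|_{H^2(\omega_E)}\lesssim h_E^{-3/2}\|\tilde\rho_E\|_{L^2(E)}$, exactly as in Lemma~\ref{lemma:eff:3}. These yield $h_E^{3/2}\|\tilde\rho_E\|_{L^2(E)}\lesssim\|\ee-\varepsilon(v)\|_{L^2(\omega_E)}+h_E^{2}\|\rot_{NC}^2\ee\|_{L^2(\omega_E)}+\osc_{II}(u_D,E)$, and Lemma~\ref{lemma:eff:1} then gives the claim; with the (false) stronger scalings you wrote, the final conclusion would still formally follow, but the intermediate inequality as stated does not hold.
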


\begin{proof}
Select a maximal  open ball $B(x_E,2r_E)\cap\Omega  \subset \omega_E$ around  a point $x_E\in E$ 
with maximal radius $2r_E$ such that  $B(x_E,2r_E)\cap \omega_E$ is a  half ball. The regularization 
$b:=\chi_{B(x_E,r_E)}*\eta_{r_E}\in C_c^\infty(\R^2)$ of the 
characteristic function 
$\chi_{B(x_E,r_E)}$  
attains values in $[0,1]$ and a
 positive integral mean   $h_E^{-1} \int_E b\, ds \approx 1$   along $E$ (depending
only on the shape regularity of $\T$); $b$  vanishes on $\partial\omega_E\setminus E$ 
and its normal derivative $\nabla b\cdot \nu=0$ vanishes 
along the entire boundary $\partial\omega_E$.

The Pythagoras theorem 
$  \|  \rho\|_{L^2(E)}^2 =  \|  \rho_E \|_{L^2(E)}^2 + h_E^{-3} \osc_{II}^2(u_D,E)$
for the residual 
$\rho:=\tau_E\cdot \rot \ee  - \nu_E\cdot (\frac{ \partial \ee \tau_E }{\partial s }
 +\frac{ \partial^2 u_D}{ \partial s^2})$ 
 and its  $L^2$ projection $\rho_E:=\Pi_{m,E}\rho$ onto $P_m(E)$
 reduces the proof to the estimation of  $\|  \rho_E \|_{L^2(E)}$.  The normal derivative 
 of $\phi:=  b\, P_E \rho_E\in C^\infty(\overline{\omega_E})$ vanishes along 
 the boundary $\partial\omega_E$ and  Lemma~\ref{lemma:representation} shows 
 \[ 
 (\rot\ee -\frac{ \partial \ee\nu_E}{\partial s} , b \rho_E  \tau_E)_{L^2(E)}
= (\rot_{NC}^2\ee,\phi)_{L^2(\omega_E)}-(\ee, \Curl^2 \phi)_{L^2(\omega_E)}.
 \]
The arguments in Lemma~\ref{lemma:bd} show  $  (\partial^2  u_D/\partial s^2   ,  b\rho_E \,  \nu_E)_{L^2(E)}
= (\varepsilon(v), \Curl^2 \phi)_{L^2(\omega_E)}$. The combination of the two identities leads to a formula for
$ (\rho,b\rho_E) _{L^2(E)}$. Since $\rho-\rho_E$ is controlled in $\osc_{II}^2(u_D,E)$, 
this and  an inverse inequality in the beginning result in 
\begin{align*}
&\|\rho_E\|_{L^2(E)}^2\lesssim  (b\rho_E,\rho_E) _{L^2(E)}
=(\rho, b\rho_E) _{L^2(E)}- (\rho-\rho_E, b\rho_E) _{L^2(E)} \\
&\; \lesssim (\rot_{NC}^2\ee,\phi)_{L^2(\omega_E)}
-(\ee-\varepsilon(v), \Curl^2 \phi)_{L^2(\omega_E)}  +\|\rho_E\|_{L^2(E) }h_E^{-3/2} \osc_{II}(u_D,E).
\end{align*}
The scaling properties of $\phi$ and its derivatives are as in the proof of the previous lemma. 
With Lemma \ref{lemma:eff:1} in the end, this  concludes the proof.
\end{proof}

\section{Numerical examples}
\label{sec:examples}
This section is devoted to numerical experiments for four different domains to 
demonstrate robustness in the reliability and efficiency of the a posteriori
error estimator $\eta(\T_\ell,\sigma_\ell)$. The implementation 
follows \cite{CGRT2008,CEG2011,CG_2016} for $k=1$ with Lam\'e 
 parameters $\lambda$ and $\mu$ from $\lambda= E\nu/( (1+\nu)(1-2\nu))$ 
 and $\mu={E}/(2(1+\nu))$ for a Young's modulus $E=10^5$ and various Poisson ratios 
 $ \nu=0.3$ and  $ \nu= 0.4999$.
\subsection{Academic example}
%
\begin{figure}[tbp]
\begin{center}
\includegraphics[width=0.8\textwidth]{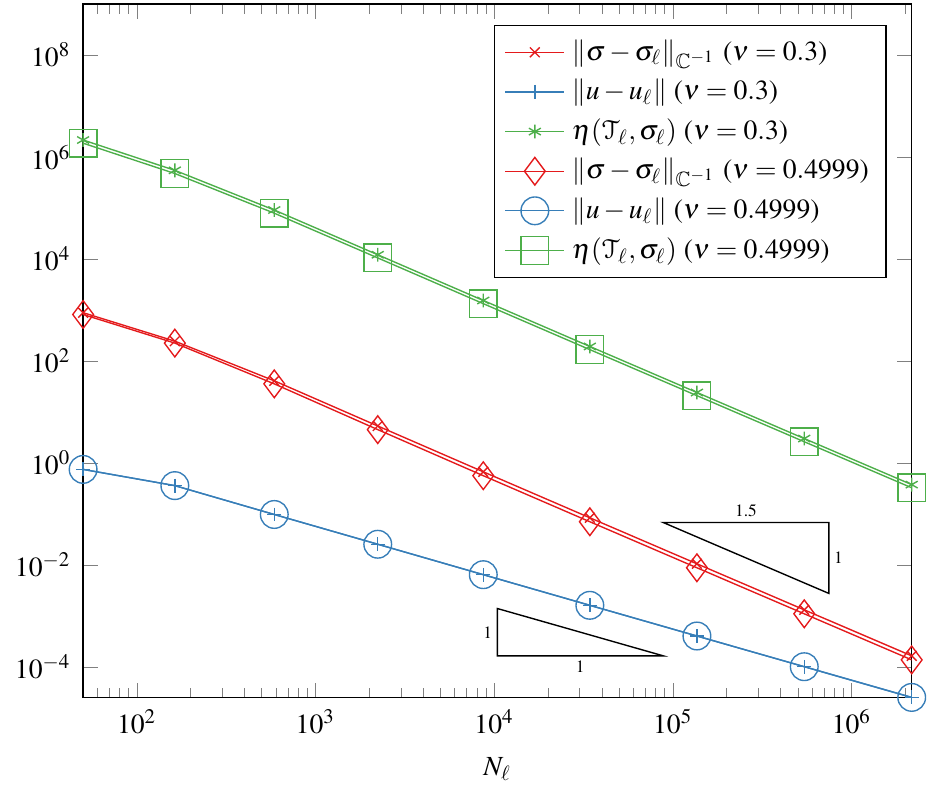}
\end{center}
\caption{Convergence history plot in  academic example}
\label{fig:Convergence1}
\end{figure}
The model problem \eqref{eq:lame} on
the unit square $\Omega = (0,1)^2 $ with homogeneous Dirichlet boundary conditions
and the right-hand side $f=(f_1,f_2)$, 
\[
f_1(x,y) = -f_2(y,x)= -2\mu\pi^3\cos(\pi y)\sin(\pi y)(2\cos(2\pi x) - 1)\quad\text{for }(x,y)\in\Omega,
\]
allows the smooth exact solution 
\[
u(x,y) =   \pi\,   \sin(\pi x)\sin (\pi y)\, \left(      \cos(\pi y)   \sin(\pi x), -\cos(\pi x)  \sin(\pi y)\right)
\quad\text{for }(x,y)\in\Omega.
\]
Note that $f$ depends only on the Lam\'e parameter $\mu$ and not on the critical Lam\'e parameter $\lambda$.
For  uniform mesh refinement,  Figure~\ref{fig:Convergence1}  displays the
robust third-order convergence of the 
a~posteriori error estimator $\eta(\T_\ell,\sigma_\ell)$ as well as the Arnold-Winther finite element stress error.
The convergence is robust    in the Poisson ratio $\nu\to 1/2$ and  the a~posteriori error estimator proves to be 
 reliable and efficient. In this example, the oscillations  $\osc(f,\T_\ell)$ dominate the a posteriori error estimator.
 
This typical observation motivates   numerical examples with $f\equiv 0$ in the sequel.

\subsection{Circular inclusion}
%
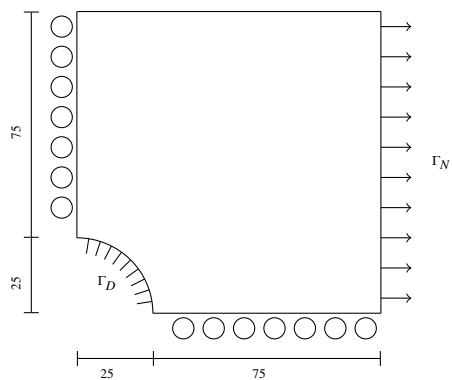
\begin{figure}[tbp]
\begin{center}
\begin{tikzpicture}[scale = 4]

\draw[-] (0.25,0) -- (1,0);
\draw[-] (1,0) -- (1,1);
\draw[-] (1,1) -- (0,1);
\draw[-] (0,0.25) -- (0,1);

\draw (0.25,0) arc (0:90:0.25);

\foreach \y in {0,...,9}
{
   \draw[->] (1,\y*0.1+0.05) -- ++(0.1,0);
}

\foreach \y in {3,...,9}
{
   \draw (-0.05,\y*0.1+0.05) circle (0.035);
}

\foreach \y in {3,...,9}
{
   \draw (\y*0.1+0.05,-0.05) circle (0.035);
}

\draw (0.1,0.1) node {\tiny$\Gamma_D$};

\draw (1.2,0.5) node {\tiny$\Gamma_N$};

\draw[|-] (-0.15,0) -- (-0.15,0.25);
\draw (-0.2,0.1) node[rotate=90] {\tiny 25};

\draw[|-|] (-0.15,0.25) -- (-0.15,1);
\draw (-0.2,0.6) node[rotate=90] {\tiny 75};

\draw[|-] (0,-0.15) -- (0.25,-0.15);
\draw (0.1,-0.2) node {\tiny 25};

\draw[|-|] (0.25,-0.15) -- (1,-0.15);
\draw (0.6,-0.2) node {\tiny 75};

\foreach \y in {1,...,9}
{
    \draw  (\y*9:0.25)  -- (\y*9:0.2);
}

\end{tikzpicture}
\end{center}
\caption{Domain circular inclusion}
\label{fig:ex:geom:CircularIncusion}
\end{figure}
\begin{figure}
\begin{center}
\includegraphics[width=0.8\textwidth]{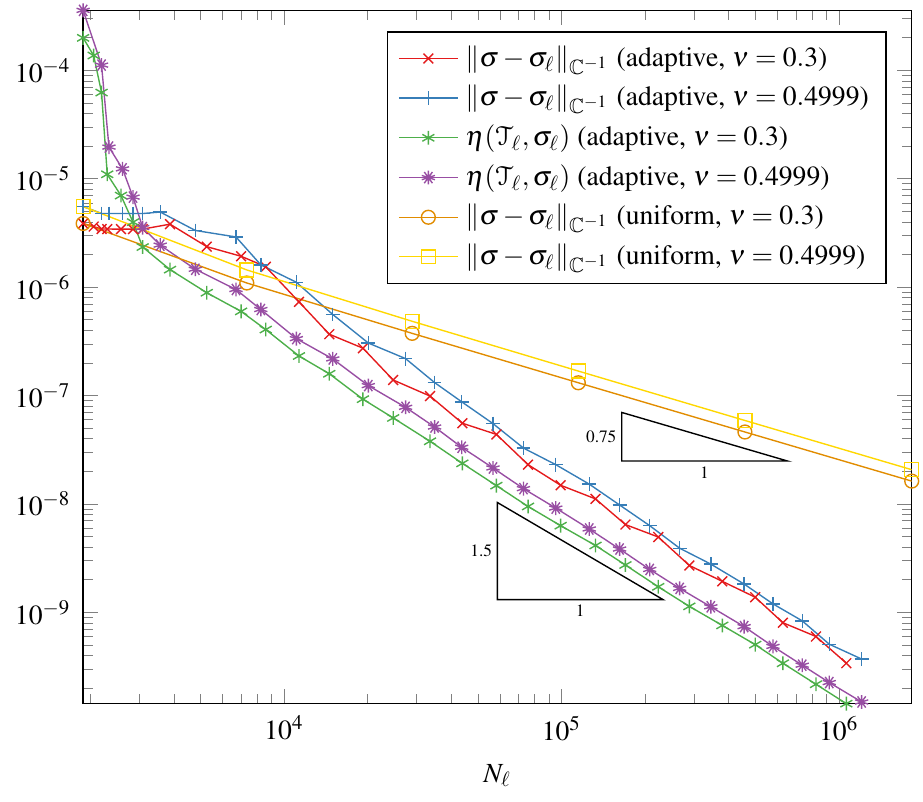}
\end{center}
\caption{Convergence history plot in circular inclusion benchmark }
\label{fig:Convergence2}
\end{figure}
The second benchmark example from the literature models a rigid circular inclusion in an infinite plate
for the domain $\Omega$   with rather mixed boundary conditions
indicated with mechanical symbols in Figure~\ref{fig:ex:geom:CircularIncusion}. 
The exact solution \cite{KS} to the model problem \eqref{eq:lame} reads (with polar coordinates $(r,\phi)$
and $\kappa = 3-4\nu$, $\gamma = 2\nu - 1$, $a=1/4$) 
\begin{align*}
	u_r &= \frac{1}{8\mu r}\left( (\kappa-1)r^2 + 2\gamma a^2 + 
	          \left( 2r^2 -\frac{2(\kappa+1)a^2}{\kappa}  + \frac{2 a^4}{\kappa r^2}\right) \cos(2\phi)\right),\\
  u_\phi &= -\frac{1}{8\mu r}\left(  2r^2-\frac{2(\kappa-1)a^2}{\kappa} - \frac{2a^4}{\kappa r^2}\right)\sin(2\phi).
\end{align*}
The  approximation of the  circular inclusion through a polygon is rather critical for the 
higher-order Arnold--Winther MFEM.
In the absence  of an implementation of parametric boundaries, adaptive mesh refinement
is necessary for higher improvements.  
The adaptive algorithm of this section is the same for all examples and acts on polygons; in particular, it  
does not monitor the curved 
boundary, but whenever some edge at the curved part $\Gamma_D$ is refined in this example,  the 
midpoint is a new node and  projected onto $\Gamma_D$. 
The convergence history plot in  Figure~\ref{fig:Convergence2} shows a reduced  convergence for uniform
refinement, while adaptive refinement (of the circular boundary) leads to optimal 
third-order convergence.

\subsection{L-shaped benchmark}
%
\begin{figure}[tbp]
\begin{center}
\begin{tikzpicture}[scale=1.25]
\path (0,0) coordinate (A);
\path (-1,-1) coordinate (B);
\path (0,-2) coordinate (C);
\path (2,0) coordinate (D);
\path (0,2) coordinate (E);
\path (-1,1) coordinate (F);

\draw (A) -- (B) -- (C) -- (D) -- (E) -- (F) -- (A);

\draw[->] (0,0) -- (2.5,0);
\draw[->] (0,0) -- (0,2.5);

\draw (2.5,-0.1) node {\tiny$x$};
\draw (-0.1,2.5) node {\tiny$y$};
\draw (1.3,1.3) node {\tiny$\Gamma_D$};

\draw (0,0) -- (0.5,0.5);
\draw (0.25,0.5) node {\tiny$r$};

\draw[->] (0.5,0) arc (0:45:0.5);
\draw (0.55,0.25) node {\tiny$\phi$};

\draw[|-|] (-1,-2.25) -- (0,-2.25);
\draw[-|] (0,-2.25) -- (2,-2.25);

\draw[|-|] (-1.25,-2) -- (-1.25,2);
\draw (-0.5,-2.4 ) node {\tiny 1};
\draw (1,-2.4) node {\tiny 2};
\draw (-1.4,0) node[rotate=90] {\tiny 4};

\draw (-0.5,0) node {\tiny $\Gamma_N$};

\foreach \y in {0,...,10}
{
    \draw  (-1+\y*0.1,-1-\y*0.1) -- ++(0,-0.15);
		\draw  (-1+\y*0.1,1+\y*0.1) -- ++(0,0.15);
}

\foreach \y in {0,...,20}
{
    \draw  (\y*0.1,-2+\y*0.1) -- ++(0,-0.15);
		\draw  (\y*0.1,2-\y*0.1) -- ++(0,0.15);
}
\end{tikzpicture}
\end{center}
\caption{L-shaped domain}
\label{fig:geom:Lshape}
\end{figure}
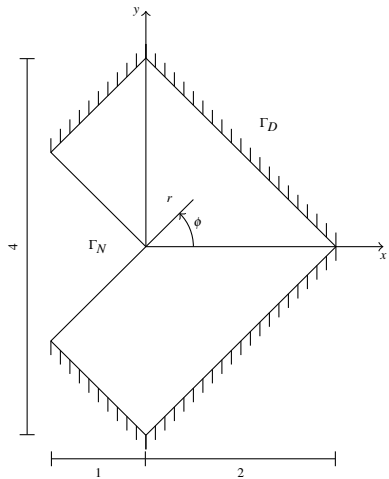
\begin{figure}
\begin{center}
\includegraphics[width=0.8\textwidth]{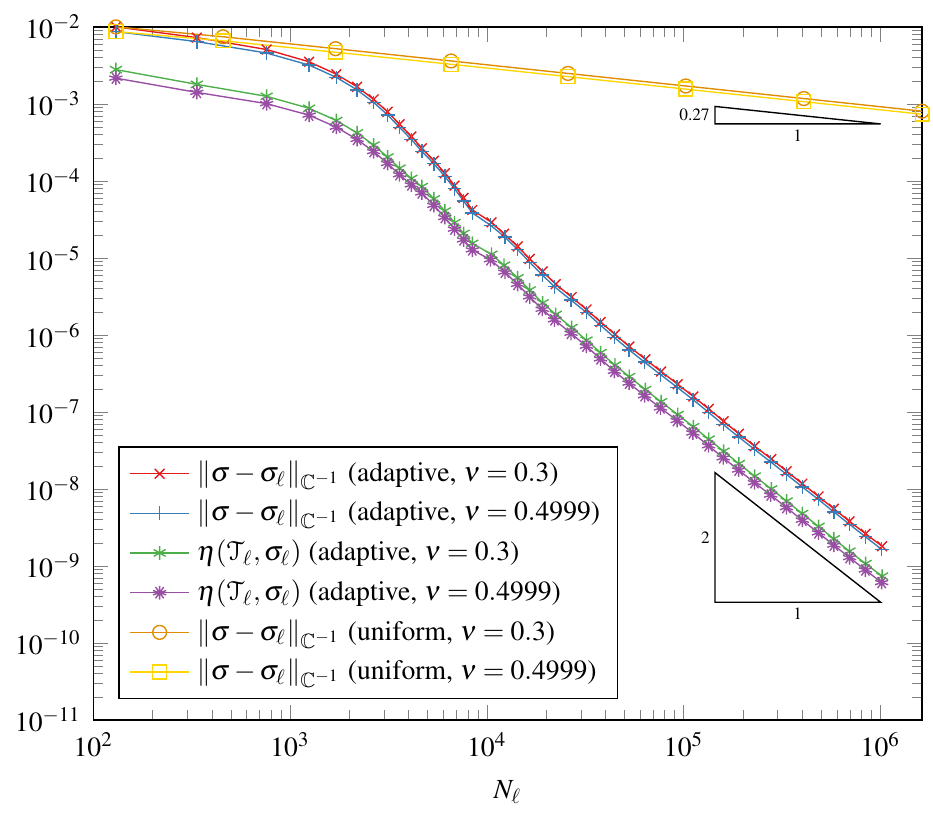}
\end{center}
\caption{Convergence history plot in L-shaped benchmark }
\label{fig:Convergence3}
\end{figure}
Consider the rotated L-shaped domain with Dirichlet and Neumann boundary depicted in Figure~\ref{fig:geom:Lshape}.
The exact solution reads in polar coordinates
\begin{align*}
 u_r(r,\phi) &= \frac{r^{\alpha}}{2\mu} \left( -(\alpha+1)\cos((\alpha+1)\phi) + (C_2-\alpha-1)C_1\cos((\alpha-1)\phi)\right),\\
 u_\phi(r,\phi) &= \frac{r^{\alpha}}{2\mu} \left( (\alpha+1)\sin((\alpha+1)\phi) + (C_2+\alpha-1)C_1\sin((\alpha-1)\phi)\right).
\end{align*}
The constants are $C_1 := -\cos((\alpha+1)\omega)/\cos((\alpha-1)\omega)$ and $C_2:= 2(\lambda + 2\mu)/(\lambda+\mu)$, 
where $\alpha= 0.544483736782$ is the first root of $\alpha\sin(2\omega)+\sin(2\omega\alpha)=0$ 
for $\omega = 3\pi/4$. The volume force $f\equiv 0$ and the Neumann boundary data 
$g\equiv 0$ vanish, and the Dirichlet boundary conditions $u_D$ are 
extracted from the exact solution.

Figure~\ref{fig:Convergence3} shows suboptimal convergence $\mathcal{O}(N_\ell^{-0.27})$, 
namely an expected rate 
$\alpha$ in terms of the maximal mesh-size,   for uniform  and fourth-order 
$L^2$ stress  convergence for adaptive mesh-refinement. 

Despite the singular solution, the adaptive algorithm recovers the higher convergence of 
Theorem~\ref{supperconvergence} as in \cite{CG_2016}.

\subsection{Cook membrane problem}
%
\begin{figure}[tbp]
\begin{center}
\begin{tikzpicture}[scale = 0.08]

\path (0,0) coordinate (A);
\path (0,44) coordinate (B);
\path (48,60) coordinate (C);
\path (48,44) coordinate (D);

\draw (D) -- (A) -- (B) -- (C);
\draw[dotted] (B) -- (D);

\foreach \y in {1,...,21}
{
   \draw (0,\y*2) -- ++(-3,-3);
}
\draw[->] (48,44) -- (48,52);
\draw[->] (48,52) -- (48,60);
\draw[->] (48,60) -- (48,65);

\draw (-6,22) node[fill=white] {\tiny$\Gamma_D\!\!$};
\draw (54,53) node {\tiny$\Gamma_N$};

\draw[|-|] (0,-5) -- (48,-5);
\draw (24,-8) node {\tiny 48};

\draw[|-|] (-10,0) -- (-10,44);
\draw[-|] (-10,44) -- (-10,60);
\draw (-15,22) node[rotate=90] {\tiny 44};
\draw (-15,52) node[rotate=90] {\tiny 16};

\draw (A) node[right] {\small A};
\draw (B) node[left] {\small D};
\draw (C) node[right] {\small C};
\draw (D) node[below] {\small B};

\end{tikzpicture}
\end{center}
\caption{Cook membrane}
\label{fig:geom:Cooks}
\end{figure}
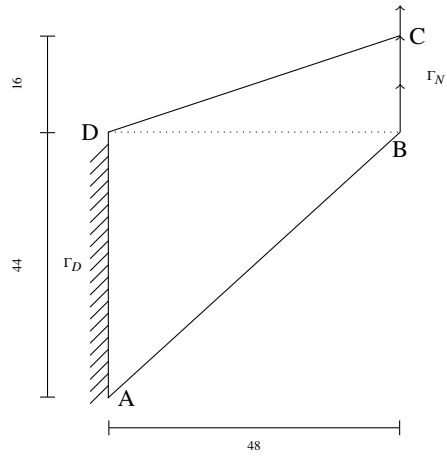
\begin{figure}
\begin{center}
\includegraphics[width=\textwidth]{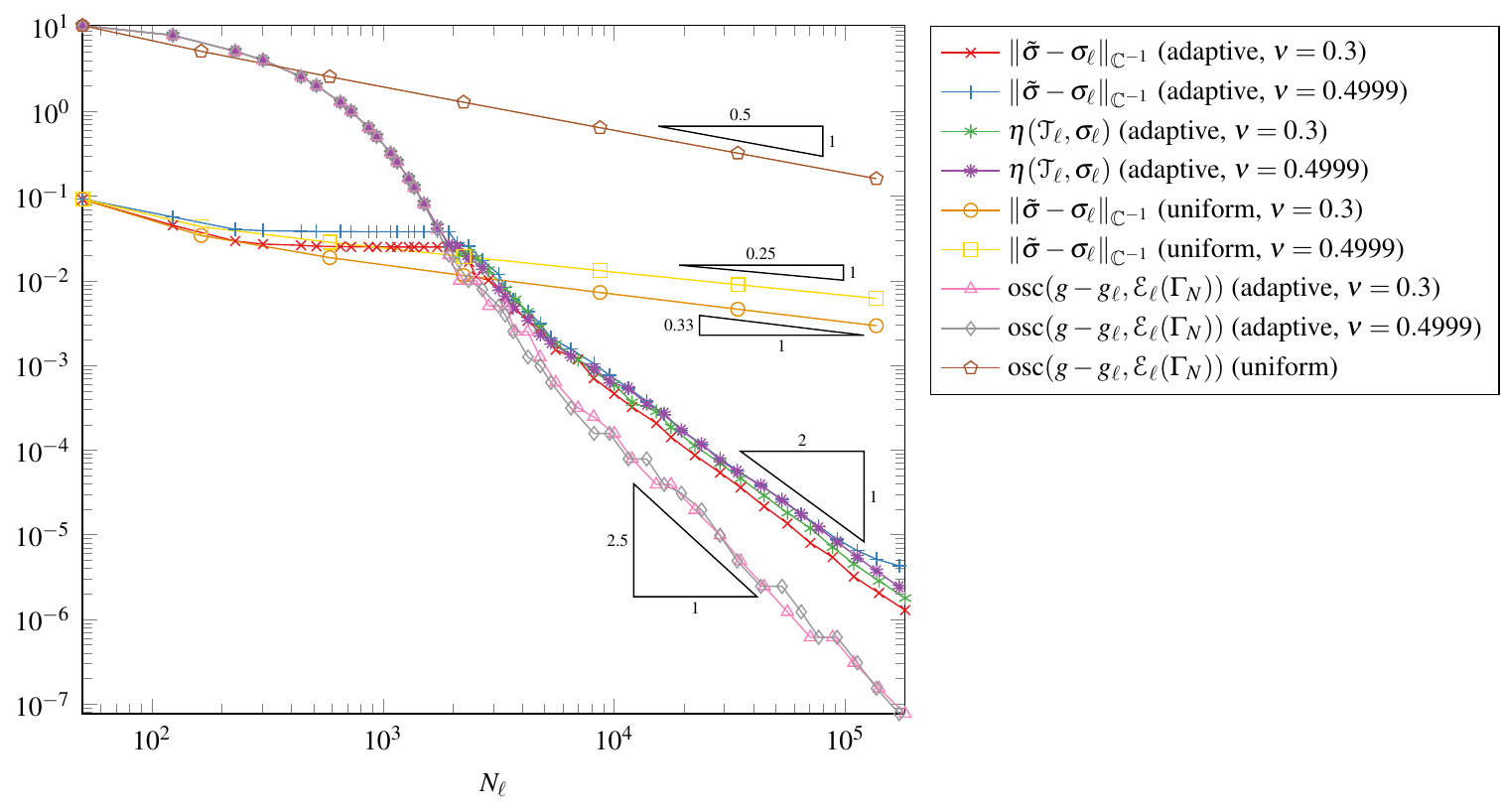}
\end{center}
\caption{Convergence history plot in Cook's membrane benchmark}
\label{fig:Convergence4}
\end{figure}
One of the more popular benchmarks in computational mechanics is 
the  tapered panel $\Omega$  with the vertices $A,B,C,D$ of 
Figure~\ref{fig:geom:Cooks} 
clamped on the left side $\Gamma_D=\operatorname{conv}(D,A)$ (with $u_D\equiv 0$)
under no volume force ($f\equiv 0$) but applied surface tractions 
$g = (0,1)$ along $\operatorname{conv}(B,C)$ and traction free on the remaining parts 
$\operatorname{conv}(A,B)$ and $\operatorname{conv}(C,D)$ 
along the Neumann boundary.

This example is a particular difficult one for the Arnold--Winther MFEM 
because of the incompatible Neumann boundary conditions on the right corners
\cite{CGRT2008,CEG2011,CG_2016}.
That means, although $g$ is piecewise constant, $g$ does not belong to $G(\T)$
for any triangulation.  In the two Neumann corner vertices $B$ and $C$ 
we therefore strongly impose the values
$\sigma_\ell(B) = (0.2491 , 0.7283;   0.7283 , 0.6676)$ and 
$\sigma_\ell(C) = ( 3/20, 11/20 ; 11/20 , 11/60)$ for the design of $g_\ell\in G(\T_\ell)$.

Since the exact solution is unknown, the error approximation rests on a reference solution $\tilde \sigma$ 
computed as $P_5(\T)$ displacement approximation on the uniform refinement of the finest adapted triangulation.

The large pre-asymptotic range of the convergence history plot in  Figure~\ref{fig:Convergence4}  illustrates the
difficulties of the Arnold-Winther finite element method  
in case of incompatible Neumann boundary conditions according to its
nodal degrees of freedom. Once the resulting and dominating boundary oscillations (caused by the necessary
 choice of discrete compatible Neumann conditions in $G(\T_\ell)$)  $\osc(g-g_\ell,\E_\ell(\Gamma_N))$  
 are resolved through adaptive mesh-refining, even the fourth-order $L^2$ stress convergence is visible 
 in a long asymptotic range 
in (the approximated error and)  the equivalent error estimator. 
   
 This example underlines that adaptive mesh-refining is unavoidable in computational mechanics with optimal rates 
 and a large saving in computational  time and memory compared to naive uniform mesh-refining.

\section*{Acknowledgements}
The work has been written, while the three authors enjoyed the hospitality of the 
Hausdorff  Research Institute of Mathematics in Bonn, Germany,  during the 
Hausdorff Trim\-ester Program {\em Multiscale Problems:
Algorithms, Numerical Analysis and Computation}.
The research of the first author (CC) has been supported by the
 Deutsche Forsch\-ungs\-gemeinschaft in the Priority Program 1748
`{\em Reliable simulation techniques in solid mechanics. Development of non-standard discretization
methods, mechanical and mathematical analysis}' under the project `{\em Foundation and application of
generalized mixed FEM towards nonlinear problems in solid mechanics}' (CA 151/22-1).
The second author (DG) has been supported by the
Deutsche Forschungsgemeinschaft (DFG) through CRC 1173.
The third author (JG) has been funded by the Austrian Science Fund (FWF) 
through the project P 29197-N32.


\section*{Appendix: Fourth-order convergence of the stress in  $L^2$}
This appendix explains a high-order convergence phenomenon observed in some numerical benchmark examples for
the lowest-order Arnold-Winther method. Adopt the notation from this paper for $k=1$ and let 
$\sigma$ solve \eqref{eq:lame} and let $\sigma_h \in AW_k(\T)$ solve \eqref{eq:discpp}. 

\begin{theorem}[fourth-order convergence]\label{supperconvergence}
Suppose  $f=f_h\in P_1(\T;\R^2)$ and $g=g_h\in G(\T)$ and suppose that the stress
$\sigma\in H^4(\Omega;\mathbb{S})$. Then  
the $L^2$ stress error satisfies (with the maximal mesh-size $h$)
\[
\|\sigma-\sigma_h\|_{L^2(\Omega)} \lesssim h^4  \|\sigma\|_{H^4(\Omega)}.
\] 
\end{theorem}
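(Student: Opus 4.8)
The plan is to exploit the error-splitting identity \eqref{e:errorsplit} together with the reliability estimate of Theorem~\ref{thmREL}, and to show that under the hypotheses $f=f_h$, $g=g_h$, $\sigma\in H^4(\Omega;\mathbb{S})$ the error estimator $\eta(\T,\sigma_h)$ is itself of order $h^4$. Under $f=f_h$ one has $\osc(f,\T)=0$, and under $g=g_h$ one has $\osc(g-g_h,\E(\Gamma_N))=0$; since the Dirichlet data $u_D=u$ is (the trace of) an $H^4$ function, its oscillation terms $\osc_I(u_D,E)$ and $\osc_{II}(u_D,E)$ are also of order $h^4$ by the standard approximation property of $\Pi_{m,E}$ for $m\ge k+2=3$. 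So the task reduces to bounding the five residual contributions in \eqref{eq:estimator} that involve $\ee:=\C^{-1}\sigma_h$.

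The key observation is that each residual measures a quantity that vanishes identically when $\sigma_h$ is replaced by $\sigma=\C\varepsilon(u)$: indeed $\rot\rot(\C^{-1}\sigma)=\rot\rot\varepsilon(u)=0$, the edge jumps $[\C^{-1}\sigma]_E$, $[\rot_{NC}(\C^{-1}\sigma)]_E$ vanish since $\C^{-1}\sigma=\varepsilon(u)\in H^3(\Omega;\mathbb{S})$ is globally smooth, and the Dirichlet-edge residuals vanish because $\tau_E\cdot\varepsilon(u)\tau_E=\partial(u\cdot\tau_E)/\partial s$ etc. along $\Gamma_D$ (these are precisely the continuous counterparts of the boundary identities in Lemma~\ref{lemma:bd}). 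Hence every residual term is a (weighted, broken) norm of a linear operator applied to $\sigma-\sigma_h$. The second ingredient is the a~priori estimate of \cite{AW02}, which for $k=1$ and $\sigma\in H^4$ gives $\|\sigma-\sigma_h\|_{L^2(\Omega)}\lesssim h^4\|\sigma\|_{H^4(\Omega)}$ already in the $L^2$ norm — but only in the $L^2$ norm, not in the broken $H^2$ seminorms appearing in $\eta$. So I would instead argue as follows: write $\rot\rot_{NC}\ee=\rot\rot_{NC}(\ee-\varepsilon(u))$ and, on each triangle, use a broken inverse estimate to bound $h_T^2\|\rot\rot(\ee-\varepsilon(I_h u))\|_{L^2(T)}$ plus an approximation term for a suitable smooth interpolant $I_h u$; the point is that $\sigma_h$ is a polynomial of degree $k+2=3$, so $\ee$ is a cubic and the inverse estimate costs only $h_T^{-2}$, while $\|\ee-\varepsilon(I_h u)\|_{L^2(T)}\lesssim \|\sigma-\sigma_h\|_{L^2(T)}+h_T^4\|u\|_{H^5}\lesssim h^4$. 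A completely analogous argument, using edge inverse/trace inequalities and the fact that the relevant jumps of $\varepsilon(u)$ vanish, handles the four edge terms: each is $\lesssim h^{-1/2}$ or $h^{-3/2}$ (from the inverse trace estimate) times $\|\ee-\varepsilon(I_h u)\|_{L^2(\omega_E)}$ times the square-root of the edge weight, which all combine to $h^4$. Feeding $\eta(\T,\sigma_h)\lesssim h^4\|\sigma\|_{H^4(\Omega)}$ into Theorem~\ref{thmREL} — noting that the hypothesis $g-g_h\perp P_0(\E(\Gamma_N))$ holds vacuously since $g=g_h$ — yields the claim; one may need one extra order of regularity on $u$ (i.e.\ $H^5$) to control the interpolation remainders, but this follows from $\sigma\in H^4$ via $\sigma=\C\varepsilon(u)$ and elliptic regularity, or can be absorbed by a density/approximation argument keeping the constant in terms of $\|\sigma\|_{H^4}$.

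The main obstacle is the gap between the norms: $\eta$ controls broken second derivatives of $\ee$, whereas the a~priori theory of \cite{AW02} and the abstract splitting \eqref{e:errorsplit} naturally deliver only the $L^2$ stress error. Closing this gap is what forces the use of inverse estimates on the polynomial $\sigma_h$, and the delicate point is that an inverse estimate applied directly to $\ee$ would give only $h^{-2}\|\ee\|=O(1)$ garbage — one must first subtract a smooth comparison function $\varepsilon(I_h u)$ whose $\rot\rot$ and edge jumps vanish (or are $O(h^4)$), so that the inverse estimate acts on $\ee-\varepsilon(I_h u)$, a quantity that is genuinely $O(h^4)$ in $L^2$. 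Verifying that a suitable $I_h u$ (e.g.\ a smoothed or Argyris-type interpolant, or simply a sufficiently accurate polynomial approximation on the patch $\omega_E$ whose tangential data along $E$ matches that of $u$) exists with the needed approximation and trace-matching properties, and bookkeeping the powers of $h$ so that all five residual weights conspire to exactly $h^4$, is the technical heart of the argument; everything else is a routine application of Theorem~\ref{thmREL} and the oscillation bounds.
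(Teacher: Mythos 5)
There is a genuine gap, and it is fatal to the whole strategy: your argument is circular. You invoke ``the a~priori estimate of \cite{AW02}, which for $k=1$ and $\sigma\in H^4$ gives $\|\sigma-\sigma_h\|_{L^2(\Omega)}\lesssim h^4\|\sigma\|_{H^4(\Omega)}$ already in the $L^2$ norm.'' No such estimate exists: the quasi-optimal bound from \cite{AW02} quoted in the paper reads $\|\sigma-\sigma_h\|_{L^2(\Omega)}\lesssim h^m\|\sigma\|_{H^m(\Omega)}$ only for $1\le m\le k+2=3$, so the best available a~priori rate is $h^3$ — and the paper explicitly remarks that this is sharp for general data. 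The fourth-order rate is precisely the assertion of Theorem~\ref{supperconvergence}, so you may not assume it. With only the $h^3$ input, your comparison-function argument yields $\|\ee-\varepsilon(I_hu)\|_{L^2}\lesssim h^3$, hence $\eta(\T,\sigma_h)\lesssim h^3$, and feeding this into Theorem~\ref{thmREL} returns $\|\sigma-\sigma_h\|\lesssim h^3$: no gain. More structurally, the efficiency lemmas of Section~\ref{sec:efficiency} show $\eta(\T,\sigma_h)\lesssim\|\sigma-\sigma_h\|_{L^2(\Omega)}+\osc(\cdots)$ (take $v=u$ there), so the estimator is \emph{equivalent} to the error up to oscillations; any route of the form ``show $\eta=O(h^4)$, then apply reliability'' must already contain, hidden inside it, an independent proof that the error is $O(h^4)$. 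Inverse estimates on the cubic $\sigma_h$ cannot manufacture that extra power of $h$.

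The missing idea is a duality/superconvergence argument that exploits the hypotheses $f=f_h$ and $g=g_h$ in a different way than merely killing oscillation terms. Because $\ddiv AW_1(\T)\subset P_1(\T;\R^2)\ni f$, the error $\sigma-\sigma_h$ is exactly divergence-free, so in the Helmholtz decomposition \eqref{e:helmholtz} the part $\alpha$ vanishes and $\sigma-\sigma_h=\Curl^2\beta$ with $\beta$ piecewise $H^6$ and of class $C^1$, with second derivatives continuous at the vertices (by the vertex degrees of freedom of $AW_1$). Hence the Argyris nodal interpolant $I_A\beta$ is well defined, $\tau_h:=\Curl^2 I_A\beta\in AW_1(\T)$ is an admissible divergence-free test function, and Galerkin orthogonality gives
\begin{equation*}
\|\sigma-\sigma_h\|_{L^2(\Omega)}^2=(\sigma-\sigma_h,\Curl^2(\beta-I_A\beta))_{L^2(\Omega)}
\le\|\sigma-\sigma_h\|_{L^2(\Omega)}\,|\beta-I_A\beta|_{H^2(\Omega)}
\lesssim\|\sigma-\sigma_h\|_{L^2(\Omega)}\,h^4|\beta|_{H^6(\T)},
\end{equation*}
with $|\beta|_{H^6(\T)}=|\sigma|_{H^4(\Omega)}$. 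The extra order comes from the $O(h^4)$ interpolation error of the quintic Argyris element in the $H^2$ seminorm, not from the residual estimator; your proposal never supplies an ingredient of this kind.
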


\begin{proof}
Since the stress error $\sigma-\sigma_h\in H^4(\T;\mathbb{S})$ is divergence-free,
$\alpha$  vanishes in  \eqref{e:helmholtz} and $\sigma-\sigma_h =  \Curl^2 \beta\in H^4(\T;\mathbb{S})$.
Since $\beta\in H^2(\T)$ is piecewise in $C^2$, it follows $\beta\in C^1(\overline{\Omega})$.
The Arnold-Winther finite elements have nodal degrees of freedom at the vertices and hence 
$\sigma_h$ is continuous at each vertex $z\in\N$. Hence the second derivatives of $\beta\in C^2(\T)\cap C^1(\overline{\Omega})$
are continuous at each vertex $z\in\N$. It follows that the nodal interpolation operator $I_A$ associated to
the Argyris finite element space $ A(\T) \subset C^1(\Omega)\cap P_5(\T)$ exists for $\beta$
in the classical sense and is composed of the piecewise local interpolation. This defines 
$\beta_h= I_A\beta$ and the divergence-free $\tau_h:=\Curl^2\beta_h\in AW(\T)$ test function in 
 \eqref{eq:lame} and in \eqref{eq:discpp}. Consequently,
 \[
 \|\sigma-\sigma_h\|_{L^2(\Omega)}^2= (\sigma-\sigma_h , \Curl^2(\beta-\beta_h))_{L^2(\Omega)}
 \le  \|\sigma-\sigma_h\|_{L^2(\Omega)} \, |\beta-I_A\beta |_{H^2(\Omega)}.   
 \]
 This and standard local interpolation error estimates for the nodal interpolation of the 
 quintic  Argyris  finite elements \cite{BrennerScott2008,Braess2007,Cia78} 
 show
  \[
 \|\sigma-\sigma_h\|_{L^2(\Omega)}\lesssim h^4  |\beta |_{H^6(\T)}=h^4  |\sigma |_{H^4(\Omega)}.   
 \]
(With $\sigma-\sigma_h=\Curl^2\beta$ and $|\sigma_h |_{H^4(\T)} =0$ for piecewise cubic $\sigma_h$
in the last step.)  
\end{proof}

\begin{thebibliography}{0}

\bibitem[ABD84]{ABD84}
D.~N. Arnold, F.~Brezzi, and J.~Douglas.
\newblock {PEERS}: {A} new mixed finite element for plane elasticity.
\newblock {\em Jpn. J. Appl. Math.}, 1:347--367, 1984.

\bibitem[AFW06]{AFW}
D.~N. Arnold, R.~S. Falk, and R.~Winther.
\newblock Finite element exterior calculus, homological techniques, and
  applications.
\newblock {\em Acta Numer.}, 15:1--155, 2006.

\bibitem[Alo96]{Alonso}
A.~Alonso.
\newblock Error estimators for a mixed method.
\newblock {\em Numer. Math.}, 74(4):385--395, 1996.

\bibitem[Arn02]{A}
D.~N. Arnold.
\newblock Differential complexes and numerical stability.
\newblock In {\em Proceedings of the {I}nternational {C}ongress of
  {M}athematicians, {V}ol. {I} ({B}eijing, 2002)}, pages 137--157. Higher Ed.
  Press, Beijing, 2002.

\bibitem[AW02]{AW02}
D.~N. Arnold and R.~Winther.
\newblock Mixed finite elements for elasticity.
\newblock {\em Numer. Math.}, 92(3):401--419, 2002.

\bibitem[BBF13]{BoffiBrezziFortin2013}
D.~Boffi, F.~Brezzi, and M.~Fortin.
\newblock {\em Mixed Finite Element Methods and Applications}, volume~44 of
  {\em Springer Series in Computational Mathematics}.
\newblock Springer, Heidelberg, 2013.

\bibitem[Bra07]{Braess2007}
D.~Braess.
\newblock {\em Finite Elements.\ Theory, Fast Solvers, and Applications in
  Elasticity Theory}.
\newblock Cambridge University Press, Cambridge, third edition, 2007.

\bibitem[BS08]{BrennerScott2008}
S.~C. Brenner and L.~R. Scott.
\newblock {\em The Mathematical Theory of Finite Element Methods}, volume~15 of
  {\em Texts in Applied Mathematics}.
\newblock Springer, New York, third edition, 2008.

\bibitem[Car97]{C97}
C.~Carstensen.
\newblock A posteriori error estimate for the mixed finite element method.
\newblock {\em Math. Comp.}, 66(218):465--476, 1997.

\bibitem[Car05]{CC2005}
C.~Carstensen.
\newblock A unifying theory of a posteriori finite element error control.
\newblock {\em Numer. Math.}, 100(4):617--637, 2005.

\bibitem[CD98]{CD_1998}
C.~Carstensen and G.~Dolzmann.
\newblock A posteriori error estimates for mixed {FEM} in elasticity.
\newblock {\em Numer. Math.}, 81(2):187--209, 1998.

\bibitem[CEG11]{CEG2011}
C.~Carstensen, M.~Eigel, and J.~Gedicke.
\newblock Computational competition of symmetric mixed {FEM} in linear
  elasticity.
\newblock {\em Comput. Methods Appl. Mech. Engrg.}, 200(41-44):2903--2915,
  2011.

\bibitem[CFPP14]{CFPP}
C.~Carstensen, M.~Feischl, M.~Page, and D.~Praetorius.
\newblock Axioms of adaptivity.
\newblock {\em Comput. Math. Appl.}, 67(6):1195--1253, 2014.

\bibitem[CG16]{CG_2016}
C.~Carstensen and J.~Gedicke.
\newblock Robust residual-based a posteriori {A}rnold-{W}inther mixed finite
  element analysis in elasticity.
\newblock {\em Comput. Methods Appl. Mech. Engrg.}, 300:245--264, 2016.

\bibitem[CGRT08]{CGRT2008}
C.~Carstensen, D.~G{\"u}nther, J.~Reininghaus, and J.~Thiele.
\newblock The {A}rnold-{W}inther mixed {FEM} in linear elasticity. {P}art {I}:
  Implementation and numerical verification.
\newblock {\em Comput. Methods Appl. Mech. Engrg.}, 197:3014--3023, 2008.

\bibitem[CGS16]{CGS}
C.~Carstensen, D.~Gallistl, and M.~Schedensack.
\newblock {$L^2$} best approximation of the elastic stress in the
  {A}rnold-{W}inther {FEM}.
\newblock {\em IMA J. Numer. Anal.}, 36(3):1096--1119, 2016.

\bibitem[CH07]{CCHJ2007}
C.~Carstensen and J.~Hu.
\newblock A unifying theory of a posteriori error control for nonconforming
  finite element methods.
\newblock {\em Numer. Math.}, 100(3):473--502, 2007.

\bibitem[Cia78]{Cia78}
P.~G. Ciarlet.
\newblock {\em The finite element method for elliptic problems}, volume~4 of
  {\em Studies in Mathematics and its Applications}.
\newblock North-Holland Publishing Co., Amsterdam-New York-Oxford, 1978.

\bibitem[CPS16]{ccdpas}
C.~Carstensen, D.~Peterseim, and A.~Schr{\"o}der.
\newblock The norm of a discretized gradient in {$H(\mathrm{div})^*$} for a
  posteriori finite element error analysis.
\newblock {\em Numer. Math.}, 132:519--539, 2016.

\bibitem[CR16]{CRabus}
C.~Carstensen and H.~Rabus.
\newblock Axioms of adaptivity for separate marking.
\newblock {\em ArXiv e-prints (submitted)}, arXiv:1606.02165, 2016.

\bibitem[GS02]{GS_2002}
V.~Girault and L.~R. Scott.
\newblock Hermite interpolation of nonsmooth functions preserving boundary
  conditions.
\newblock {\em Math. Comp.}, 71(239):1043--1074, 2002.

\bibitem[HHX11]{HHX2011}
J.~Huang, X.~Huang, and Y.~Xu.
\newblock Convergence of an adaptive mixed finite element method for
  {K}irchhoff plate bending problems.
\newblock {\em SIAM J. Numer. Anal.}, 49(2):574--607, 2011.

\bibitem[KS95]{KS}
R.~Kouhia and R.~Stenberg.
\newblock A linear nonconforming finite element method for nearly
  incompressible elasticity and {S}tokes flow.
\newblock {\em Comput. Methods Appl. Mech. Engrg.}, 124(3):195--212, 1995.

\bibitem[Ver96]{Verfuerth1996}
R.~Verf{\"u}rth.
\newblock {\em A Review of A Posteriori Error Estimation and Adaptive
  Mesh-Refinement Techniques}.
\newblock Wiley and Teubner, 1996.

\end{thebibliography}
\end{document}